\theoremstyle{plain} 
\newtheorem{theorem}{Theorem}
\newtheorem*{theorem*}{Theorem}
\newtheorem{lemma}[theorem]{Lemma}
\newtheorem{corollary}[theorem]{Corollary}
\newtheorem{proposition}[theorem]{Proposition}
\theoremstyle{remark}
\newtheorem{remark}{Remark}
\let\save@mathaccent\mathaccent
\newcommand*\if@single[3]{%
  \setbox0\hbox{${\mathaccent"0362{#1}}^H$}%
  \setbox2\hbox{${\mathaccent"0362{\kern0pt#1}}^H$}%
  \ifdim\ht0=\ht2 #3\else #2\fi
  }
\newcommand*\rel@kern[1]{\kern#1\dimexpr\macc@kerna}
\newcommand*\widebar[1]{\@ifnextchar^{{\wide@bar{#1}{0}}}{\wide@bar{#1}{1}}}
\newcommand*\wide@bar[2]{\if@single{#1}{\wide@bar@{#1}{#2}{1}}{\wide@bar@{#1}{#2}{2}}}
\newcommand*\wide@bar@[3]{%
  \begingroup
  \def\mathaccent##1##2{%
    \let\mathaccent\save@mathaccent
    \if#32 \let\macc@nucleus\first@char \fi
    \setbox\z@\hbox{$\macc@style{\macc@nucleus}_{}$}%
    \setbox\tw@\hbox{$\macc@style{\macc@nucleus}{}_{}$}%
    \dimen@\wd\tw@
    \advance\dimen@-\wd\z@
    \divide\dimen@ 3
    \@tempdima\wd\tw@
    \advance\@tempdima-\scriptspace
    \divide\@tempdima 10
    \advance\dimen@-\@tempdima
    \ifdim\dimen@>\z@ \dimen@0pt\fi
    \rel@kern{0.6}\kern-\dimen@
    \if#31
      \overline{\rel@kern{-0.6}\kern\dimen@\macc@nucleus\rel@kern{0.4}\kern\dimen@}%
      \advance\dimen@0.4\dimexpr\macc@kerna
      \let\final@kern#2%
      \ifdim\dimen@<\z@ \let\final@kern1\fi
      \if\final@kern1 \kern-\dimen@\fi
    \else
      \overline{\rel@kern{-0.6}\kern\dimen@#1}%
    \fi
  }%
  \macc@depth\@ne
  \let\math@bgroup\@empty \let\math@egroup\macc@set@skewchar
  \mathsurround\z@ \frozen@everymath{\mathgroup\macc@group\relax}%
  \macc@set@skewchar\relax
  \let\mathaccentV\macc@nested@a
  \if#31
    \macc@nested@a\relax111{#1}%
  \else
    \def\gobble@till@marker##1\endmarker{}%
    \futurelet\first@char\gobble@till@marker#1\endmarker
    \ifcat\noexpand\first@char A\else
      \def\first@char{}%
    \fi
    \macc@nested@a\relax111{\first@char}%
  \fi
  \endgroup
}
\begin{document}
\allowdisplaybreaks

\begin{frontmatter}
\title{Sharp Convergence Rates for Empirical  \\ Optimal Transport with Smooth Costs}
\runtitle{Empirical  Optimal Transport with Smooth Costs}
 
\begin{aug}
\author[A]{\fnms{Tudor} \snm{Manole}\ead[label=e1]{tmanole@andrew.cmu.edu}},
\and
\author[B]{\fnms{Jonathan} \snm{Niles-Weed}\ead[label=e2]{jnw@cims.nyu.edu}}
\address[A]{Department of Statistics and Data Science, 
Carnegie Mellon University, \printead{e1}}
\address[B]{Courant Institute of Mathematical Sciences, New York University, \printead{e2}}
\end{aug}

\begin{abstract}
We revisit the question of characterizing the convergence rate of plug-in estimators of optimal transport costs.
It is well known that an empirical measure comprising independent samples from an absolutely continuous distribution on $\bbR^d$ converges to that distribution at the rate $n^{-1/d}$ in Wasserstein distance, which can be used to prove that plug-in estimators of many optimal transport costs converge at this same rate.
However, we show that when the cost is smooth, this analysis is loose: plug-in estimators based on empirical measures converge quadratically faster, at the rate $n^{-2/d}$.
As a corollary, we show that the Wasserstein distance between two distributions 
is significantly easier to estimate when the measures are well-separated.
We also prove lower bounds, showing not only that our analysis of the plug-in estimator is tight, but also that no other estimator can enjoy significantly faster rates of convergence uniformly over all pairs of measures.
Our proofs rely on empirical process theory arguments based on tight 
control of $L^2$ covering numbers for locally Lipschitz and semi-concave functions.
As a byproduct of our proofs, we derive $L^\infty$ estimates on the displacement induced
by the optimal coupling between any two measures satisfying suitable concentration and anticoncentration conditions,
for a wide range of cost functions.  
\end{abstract}

\begin{keyword}[class=MSC]
\kwd[Primary ]{60F25} 
\kwd[; secondary ]{62G05}
\end{keyword}

\begin{keyword}
\kwd{Optimal Transport}
\kwd{Wasserstein Distance}
\kwd{Empirical Measure}
\kwd{Minimax Bound} 
\end{keyword}

\end{frontmatter}

\section{Introduction}
\label{sec:introduction}
Optimal transport costs have received a recent surge of interest in applied probability and statistics.
Arising from the classical optimal transport problem~\citep{villani2003}, 
this family of divergences measures the work required to couple two probability distributions
in terms of 
a cost function over the space upon which they are defined. This fact makes them a powerful tool
for comparing measures in a manner which is sensitive to the geometry of the underlying space, and has motivated their   use in areas such as computer vision~\citep{rubner2000}, generative modeling~\citep{arjovsky2017}, 
and  computational biology~\citep{orlova2016}, among many others. 
We refer the reader to the monographs of
\cite{panaretos2019}, \cite{santambrogio2015}, and \cite{peyre2019}
for surveys of their respective applications in  statistics,  applied mathematics, and machine learning.

In many of these applications, it is necessary to estimate the optimal transport cost between two measures
on the basis of independent observations.
This raises the fundamental question of characterizing the expected convergence rate of empirical estimators of these costs.
Though this question has been studied in great generality in the literature, the goal of this paper is to highlight some unexpected phenomena that arise when the cost function is smooth.

For concreteness, we focus throughout on the optimal transport problem
over the Euclidean space $\bbR^d$ for some integer $d \geq 1$. 
Let $\calX, \calY \subseteq \bbR^d$, and let $\calP(\calX)$ denote the set
of Borel probability measures with support contained in $\calX$. 
Let $\mu \in \calP(\calX)$ and $\nu \in \calP(\calY)$. 
Given a nonnegative cost function $c:\calX \times \calY \to \bbR$, the {\it optimal transport cost} 
based on $c$ is defined~by
$$\calT_c(\mu,\nu) = \inf_{\pi \in \Pi(\mu,\nu)} \int c(x,y)d\pi(x,y).$$
Here, $\Pi(\mu,\nu)$
denotes the set of {\it couplings}, that is, joint Borel probability measures over $\calX \times \calY$
with respective marginals $\mu$ and $\nu$.

In statistical contexts, the measures $\mu$ and $\nu$ are typically unknown, 
and it is necessary to estimate the optimal transport
cost between them on the basis of i.i.d. observations $X_1, \dots, X_n \sim \mu$ and $Y_1, \dots, Y_n \sim \nu$.
A canonical choice is the plug-in estimator $\calT_c(\mu_n,\nu_n)$, obtained by replacing $\mu$ and $\nu$ by their corresponding empirical measures:
$$\mu_n = \frac 1 n \sum_{i=1}^n \delta_{X_i}, \quad \nu_n = \frac 1 n \sum_{i=1}^n \delta_{Y_i}.$$
We call this quantity the {\it empirical optimal transport cost}, and we seek sharp upper and lower bounds on the expected gap between the empirical optimal transport cost and its population counterpart:
\begin{equation}
\label{eq:Delta_n_c}
\Delta_n(c) = \bbE \big| \calT_c(\mu_n,\nu_n) - \calT_c(\mu,\nu)\big|.
\end{equation}
We highlight the dependence of $\Delta_n$ on $c$ because a key finding of our work is that the rate of decay of $\Delta_n$ is driven by properties of the cost, and can improve significantly when $c$ is smooth.

To illustrate the phenomena we have in mind, we turn to perhaps the most widely-used cost functions: those of the form $c_p(x,y) = \norm{x-y}^p$, $p\geq 1$, where $\| \cdot \|$ denotes the Euclidean metric on $\bbR^d$.
These costs give rise to the $p$-Wasserstein distances,  defined by $W_p = \calT_{c_p}^{1/p}$.
The convergence rate of the empirical $p$-Wasserstein distance $W_p(\mu_n,\nu_n)$ to its population counterpart $W_p(\mu,\nu)$ 
is a well-studied problem; for instance,
assuming for simplicity of exposition that $\calX=\calY$ is a compact set,
\cite{fournier2015} prove 
that there exists a constant
$C_d > 0$, depending only on $d$ and $\calX$, such that
\begin{equation}
\label{eq:fournier_bound}
\bbE W_p(\mu_n,\mu)  \leq \big[\bbE W_p^p(\mu_n,\mu)\big]^{\frac 1 p}
 \leq C_d n^{-1/d}, 
\end{equation}
whenever $d > 2p$. Since $W_p$ is a metric, it follows that 
\begin{equation}
\label{eq:naive_Wpp_bound} 
 \bbE \big|W_p(\mu_n,\nu_n) - W_p(\mu,\nu)\big|
 \leq \bbE W_p(\mu_n, \mu) + \bbE W_p(\nu_n,\nu) \leq 2C_d n^{-1/d}.
 \end{equation}
The $n^{-1/d}$ rate in equation~\eqref{eq:naive_Wpp_bound} is well known 
to be inherent to statistical optimal transport problems.
In particular, it was shown by \cite{niles-weed2019} that,
up to polylogarithmic factors, no estimator of $W_p(\mu, \nu)$ 
 improves on the rate in equation~\eqref{eq:naive_Wpp_bound} uniformly over all pairs of measures $(\mu,\nu)$.
Nevertheless, one of the main contributions of this paper is to show that this bound
is only tight when $\mu = \nu$, and 
can otherwise be improved up to quadratically.
Indeed, our results imply the bound
\begin{equation}
\label{eq:p_transport_location}
\Delta_n(c_p) = \bbE \big| W_p^p(\mu_n,\nu_n) - W_p^p(\mu,\nu)\big|
 \lesssim \begin{cases}
   n^{-p/d}, & 1 \leq p \leq 2 \\
   n^{-2/d}, & 2 \leq p < \infty,
   \end{cases}
\end{equation}
which, as we shall see, entails
\begin{equation}
\label{eq:wasserstein_location}
\bbE \big| W_p(\mu_n,\nu_n) - W_p(\mu,\nu)\big|
 \lesssim \delta_0^{1-p}\begin{cases}
   n^{-p/d}, & 1 \leq p \leq 2 \\
   n^{-2/d}, & 2 \leq p < \infty
   \end{cases},\quad \text{if  }\   W_p(\mu,\nu) \geq \delta_0 > 0.
\end{equation}
Whenever $p > 1$, equations~\eqref{eq:p_transport_location} and~\eqref{eq:wasserstein_location}
provide a significant sharpening of the naive
estimate in equation~\eqref{eq:naive_Wpp_bound}. We show 
that such improvements arise due to the H\"older smoothness of the cost $c_p$, and in fact, 
similar rates of convergence for $\Delta_n(c)$ are enjoyed by a much broader
collection of smooth cost functions. 
Beyond smoothness assumptions on $c$, we establish our main results 
under the following broad structural condition, which is presumed throughout the sequel,
\begin{enumerate}[leftmargin=1.65cm,listparindent=-\leftmargin,label=(\textbf{H\arabic*}),start=0]   
\item  \label{assm:global} The cost function $c:\calX \times \calY \to \bbR$
is nonnegative, and takes the form $c(x,y) = h(x-y)$ where $h: \bbR^d \to \bbR_+$
is convex, even, and lower semi-continuous.
\end{enumerate}
Before summarizing our main results and comparing them to further existing literature, 
we begin with an idealized example which illustrates the role of these conditions.

\subsection{Example: Location Families}
\label{sec:example_location} 
For simplicity, we limit this example to upper bounding the following
one-sample analogue of $\Delta_n(c)$,
$$\bbE |\calT_c(\mu_n,\nu) - \calT_c(\mu,\nu)|.$$ 
We also continue to assume for simplicity that $\calX=\calY$ is a convex and compact set.
Let $c$ be any cost function such 
that condition~\ref{assm:global} holds, and assume
there exists $\alpha \in (1,2]$ such that 
$h \in \calC^\alpha(\calX)$. Here, $\calC^\alpha(\calX)$
denotes the H\"older space over $\calX$
with regularity $\alpha$, which is defined in Section~\ref{sec:background}
together with all other notational conventions used in the sequel.  

Let $\mu,\nu \in \calP(\calX)$ be any two measures differing 
only by a location transformation with respect to a fixed vector $z_0 \in \bbR^d$, 
in the sense that $\nu = {T_0}_\# \mu := \mu(T_0^{-1}(\cdot))$, where $T_0(z) = z+z_0$
and $\#$ denotes the pushforward operator. 
In this example, it is simple to find an optimal
coupling between $\mu$ and $\nu$. 
Indeed, recall that $h$ is convex and even under~\ref{assm:global},
thus for all $\pi\in \Pi(\mu,\nu)$, Jensen's inequality implies
$$\int h(x-y)d\pi(x,y) \geq h\left( \int xd\mu(x) - \int yd\nu(y)\right) = h(z_0).$$
Thus, $\calT_c(\mu,\nu) \geq h(z_0)$, and the lower bound is achieved by the coupling
$\pi=(Id, T_0)_\# \mu$, implying that $T_0$ is an optimal transport map from $\mu$ to $\nu$. 
On the other hand, for all couplings $\pi_n \in \Pi(\mu_n,\mu)$,
$\gamma_n = (Id, T_0)_\# \pi_n$
is a (typically suboptimal) coupling between $\mu_n$ and $\nu$, whence
\begin{align*}
\calT_c(\mu_n,\nu)
 \leq \int c(z,y) d\gamma_n(z,y)=\int c(z, T_0(x)) d\pi_n(z,x).
\end{align*}
Since we assumed that the H\"older norm $\Lambda :=  \norm h_{\calC^\alpha(\calX)}$  is finite, 
$h$ is close to its first-order 
Taylor expansion. Specifically, we obtain from the above display,
\begin{align}
\label{eq:location_shift_taylor}
\calT_c(\mu_n,\nu)
 &\leq  \int \Big[h(x-T_0(x)) +  \langle \nabla h(x- T_0(x)), z-x\rangle  
  +  \Lambda   \norm{z-x}^\alpha \Big] d\pi_n(z,x).
\end{align} 
Due to the marginal constraints in the definition of $\pi_n$, 
equation~\eqref{eq:location_shift_taylor} is tantamount to  
\begin{align}
\calT_c(\mu_n,\nu)
 &\leq  \calT_c(\mu,\nu) + \int \langle \nabla h(z_0), \cdot\rangle d(\mu_n-\mu) 
  + \Lambda  \int \norm{x-z}^\alpha  d\pi_n(x,z).
\end{align}
The final term of the above display is manifestly the $\norm\cdot^\alpha$-transport cost between $\mu_n$ and $\mu$,
with respect to a possibly
suboptimal coupling $\pi_n \in \Pi(\mu_n,\mu)$. 
Since it holds for any choice of $\pi_n$, taking the infimum over such couplings 
leads to 
\begin{align}
\label{eq:example_location_bound}
\calT_c(\mu_n,\nu)
  &\leq \calT_c(\mu,\nu) + \int \langle \nabla h(z_0), \cdot\rangle d(\mu_n-\mu)
  + \Lambda W_\alpha^\alpha(\mu_n, \mu).
\end{align}
The second term on the right-hand side of the above display is  
a mean-zero sample average, and hence typically decays at the rate $n^{-1/2}$ in probability. 
Equation~\eqref{eq:example_location_bound} thus provides an upper bound on $\calT_c(\mu_n,\nu) - \calT_c(\mu,\nu)$
which is primarily driven by the rate of convergence of the empirical measure under the 
optimal transport  cost with respect to $\norm\cdot^\alpha$, which we refer to as 
the $\alpha$-transport cost in the sequel. 
By equation~\eqref{eq:fournier_bound}, we arrive at the following
one-sided estimate whenever $d \geq 5$,
\begin{equation}
\label{eq:one_sided_location}
\bbE\Big[ \calT_c(\mu_n,\nu) - \calT_c(\mu,\nu)\Big] \leq 
\Lambda  \bbE \big[W_\alpha^\alpha(\mu_n,\mu)\big]
\leq \Lambda C_d n^{-\alpha/d}.
\end{equation}
Although equation~\eqref{eq:one_sided_location} does not imply
an upper bound in expected absolute value, 
it captures the main features of our problem; as we shall see,
a simple extension of the above derivations leads to the bound
 \begin{equation}
\label{eq:two_sided_location}
\bbE \big|\calT_c(\mu_n,\nu) - \calT_c(\mu,\nu)\big| \leq C_0 n^{-\alpha/d}, \quad
\text{for all }  d \geq 5,
\end{equation}
for a large enough constant $C_0 > 0$ depending on $d,\calX$ and $\Lambda$.
Equation~\eqref{eq:two_sided_location} shows that, for the class of cost functions under consideration, 
the rate of convergence of the empirical optimal transport
cost is largely driven by the smoothness of $c$  when $\mu$
and $\nu$ differ merely in mean. In particular, notice that the cost $h(x) = \norm x^p$
satisfies $h \in \calC^p$ for all $p \geq 1$, 
which implies the previously announced result~\eqref{eq:p_transport_location} in the special
case of one-sample location families. 

This fast rate of convergence in equation~\eqref{eq:two_sided_location} arose in the present example because 
the first-order term in the Taylor expansion~\eqref{eq:location_shift_taylor} is negligible, leading
to a rate driven only by its remainder. While
this argument cannot easily be extended to general measures
$\mu$ and $\nu$,  
its conclusion turns out to be generic, as we now describe.

\subsection{Our Contributions}
\label{sec:contributions}
The primary contribution
of this paper is to provide sharp upper and lower bounds on $\Delta_n(c)$ 
for smooth costs satisfying condition~\ref{assm:global}.  
In this setting, our main result informally states that 
whenever $h \in \calC^\alpha$ for some $\alpha > 0$,   
\begin{equation}
\label{eq:informal_statement}
\Delta_n(c) \lesssim \begin{cases}
n^{-\alpha/d}, & 0 \leq \alpha \leq 2 \\
n^{-2/d}, & 2 \leq \alpha < \infty 
\end{cases},\qquad \text{for all } d \geq 5.
\end{equation}
This upper bound is stated formally in Theorem~\ref{thm:ub_general} under the assumption that $\mu$ and $\nu$
admit bounded support. Under additional conditions on $c$, we extend this result
to measures $\mu$ and $\nu$ with unbounded support, satisfying appropriate tail assumptions, 
in Theorem~\ref{thm:main_unbounded} and 
Corollary~\ref{cor:norm_p_unbounded}. As in equation~\eqref{eq:wasserstein_location}, 
our results have natural implications for the convergence rate of empirical Wasserstein distances, 
which we discuss in Corollary~\ref{cor:wasserstein}.  
In view of Section~\ref{sec:example_location},
the convergence rate~\eqref{eq:informal_statement} admits a natural
interpretation:
the first order term in a formal expansion of the empirical
optimal transport cost is typically negligible when $d \geq 5$, 
leading to a rate that improves with the smoothness parameter $\alpha \in (0,2)$.
When $\alpha \geq 2$, the quadratic term in this expansion is not negligible, 
thus faster rates do not occur without stronger conditions. 

At the heart of our proofs is the Kantorovich dual formulation of the optimal
transport problem---summarized in Section~\ref{sec:background}---which
allows us to reduce the problem
of bounding $\Delta_n(c)$ to that of bounding the expected suprema
of empirical processes indexed by collections of 
sufficiently regular Kantorovich potentials. 
While characterizing the regularity of these potentials is routine
when $\mu$ and $\nu$ are compactly supported~(\cite{gangbo1996}, Appendix C), 
the bulk of our efforts lies in the case where they admit unbounded support.
In this setting, one of our key technical contributions is to provide
quantitative $L^\infty$ estimates on the displacement induced by the optimal coupling between any two 
 measures satisfying appropriate tail 
 conditions (Theorem~\ref{thm:coupling_quantitative}).
For instance, the following is a special case of our result for the $p$-transport cost. 
\begin{theorem*}[Informal]
Let $\mu,\nu \in \calP(\bbR^d)$ and $p > 1$.  
Let $\nu$ be a $\sigma^2$-sub-Gaussian measure~\citep{boucheron2013} and $\mu$ have finite $p$-th moment, 
and assume there exist constants $c_1,c_2 > 0$ such that $\mu(B_{x,1}) \geq c_1 \exp(-c_2 \norm x^2)$ 
for all $x \in \bbR^d$. 
Then,  
for any optimal coupling $\pi$ between
$\mu$ and $\nu$ with respect to the cost $c_p(x,y)=\norm{x-y}^p$, 
\begin{equation}
\label{eq:informal_regularity}
\norm{y} \lesssim \sigma (\norm{x}+1),
\quad \text{ for  } \pi\text{-a.e. } (x,y).
\end{equation}
In particular, if there exists an optimal transport map $T$ from $\mu$ to $\nu$
with respect to $c_p$, then
$$\norm{T(x)} \lesssim \sigma (\norm{x}+1),
\quad \text{ for  } \mu\text{-a.e. } x.$$
\end{theorem*}
Analogues of equation~\eqref{eq:informal_regularity} have  previously been derived 
by~\cite{colombo2021} in the special case where $\mu$ is a Gaussian measure and $p=2$, and we 
further discuss these results below the statement of Theorem~\ref{thm:coupling_quantitative}. 
As we shall see, equation~\eqref{eq:informal_regularity} leads to estimates on the local Lipschitz constants of 
Kantorovich potentials between any two, possibly atomic probability measures, and forms the basis
of our main results when $\mu$ and $\nu$ have unbounded support. 
These results are quantitative analogues of the fact, proved by~\cite{gangbo1996}, that Kantorovich potentials
are locally Lipschitz under mild smoothness conditions on $c$.
  
In Section~\ref{sec:lower_bounds_empirical} we explicitly construct measures $\mu$ and $\nu$
for which  inequality~\eqref{eq:informal_statement} is achieved up to universal constants, 
inspired by the example in Section~\ref{sec:example_location}. 
While this result proves that our upper bounds 
cannot generally be improved, it does not preclude the 
possibility that there exists another estimator $\hat \calT_n$, i.e. a measurable function of $X_1, Y_1, \dots, X_n, Y_n$,
for which the quantity $\bbE|\hat \calT_n - \calT_c(\mu,\nu)|$ scales at a faster rate than
that of equation~\eqref{eq:informal_statement}, uniformly
over pairs of measures $\mu,\nu$. 
We prove in Section~\ref{sec:lower_bounds_minimax} that, in an information theoretic sense, such an improvement
is not possible up to polylogarithmic factors.

Though we prove inequality~\eqref{eq:informal_statement} for all $d \geq 5$, 
notice that it does not generally hold for all $d\geq 1$. Indeed, it is a simple observation
that the empirical optimal transport cost cannot generally 
achieve a faster rate of convergence than $n^{-1/2}$~\citep{niles-weed2019}.
	The probabilistic behavior of the empirical costs is therefore qualitatively different in low dimension.
While our proof techniques for bounded measures can be extended to the case $d \leq 4$,
they do not appear to yield  tight results for certain values of $\alpha > 0$; see
Remark~\ref{rem:low_dim} below. 
	Similarly, our techniques for unbounded measures do not generally appear to be tight in the low-dimensional case.
Since our goal in this paper is to obtain sharp convergence rates, we assume in what follows that $d \geq 5$, where we are able to establish exact results.

\paragraph*{Outline of the remainder of the paper}
In Sections~\ref{sec:past_work} and~\ref{sec:background}, we review prior work and recall some important preliminary results on the duality theory of transport costs.
Section~\ref{sec:upper_bounds_compact} contains our main results for compactly supported measures.
In Section~\ref{sec:upper_bounds_unbounded}, we extend these results to the unbounded case.
Lower bounds appear in Section~\ref{sec:lower_bounds}.
The proofs of certain intermediary results 
from Sections~\ref{sec:upper_bounds_compact}--\ref{sec:lower_bounds}
are respectively deferred to Appendices~\ref{app:proofs_compact}--\ref{app:proof_minimax}.  

\subsection{Related Work}
\label{sec:past_work}
Upper bounds on the expected deviation $\Delta_n(c)$ are available
in the literature for several special cases.
The closest to our setting is the quadratic cost $c_2(x,y) = \norm{x-y}^2$,
for which~\cite{chizat2020} prove that 
$\Delta_n(c_2) \lesssim n^{-2/d}$ when $\mu$ and $\nu$ are compactly supported.
Their proof hinges upon the Knott-Smith optimality criterion, 
which allows them to relate
$\Delta_n(c_2)$ to  suprema of empirical processes indexed by convex potentials,
which are in fact globally Lipschitz since $\mu$ and $\nu$ are assumed compact.
Empirical processes indexed by globally Lipschitz convex functions are well-studied~\citep{bronshtein1976, guntuboyina2012b},
and lead to their result.
Our results extend theirs in two directions: we replace $c_2$ by any smooth cost, and we remove the condition that the measures be compactly supported.
When $\mu$ and $\nu$ are compactly supported and $\alpha=2$, 
our proof strategy mirrors that of \cite{chizat2020}: though the potentials arising for other costs are not necessarily convex, it is still possible to use existing empirical process theory bounds to obtain sharp rates.
On the other hand, when $\mu$ and $\nu$ have unbounded support, the relevant potentials may not even be globally Lipschitz, and our proof requires significant new techniques.

Faster rates of convergence for estimating optimal transport costs
are achievable under strong conditions on $\calX$ and $\calY$. 
For instance, when $c$ is a metric
raised to a power $p \geq 1$, 
the bound $\Delta_n(c) \lesssim n^{-1/2}$ is known to hold when $\calX$ and $\calY$ are 
one-dimensional~\citep{munk1998, freitag2005, bobkov2019, delbarrio2019c, manole2019} or countable~\citep{sommerfeld2018, tameling2019}.
In both of these cases,
the corresponding empirical $p$-Wasserstein distance is known to exhibit distinct convergence rates depending on whether $\mu$ and $\nu$
are vanishingly close or not, similar to our findings in equation~\eqref{eq:wasserstein_location}.
While these two examples form 
important special cases, their underlying proof techniques are closely tied to 
characterizations of the optimal transport problem which are only available for discrete and one-dimensional measures, 
and do not shed more general light on the behaviour of $\calT_c(\mu_n,\nu_n)$.

Though the naive bound in equation~\eqref{eq:naive_Wpp_bound} 
is loose for $ p > 1$ when $W_p(\mu,\nu)$ is bounded away from zero, 
\cite{liang2019} and \cite{niles-weed2019} show that it cannot 
generally be improved  by more than a polylogarithmic factor when no separation conditions are placed on $\mu$ and $\nu$.
Recall that this  upper bound arose from the convergence rate of $\mu_n$ under the $p$-Wasserstein distance in equation~\eqref{eq:fournier_bound}. 
The study of such convergence rates was initiated by \cite{dudley1969} in the special case $p=1$, 
who also used arguments from empirical process theory, due to the dual characterization of 
$W_1$ as a supremum over Lipschitz functions~\citep{villani2003}.
For $p > 1$, distinct techniques have been used to study this problem 
in great generality by \cite{boissard2014a, fournier2015, bobkov2019, weed2019, singh2019, lei2020}, 
and references therein. \cite{dudley1969} also derived deterministic lower bounds on the quality of approximating
$\mu$ by any discrete measure supported on $n$ points under $W_1$---we build upon these results to obtain our lower bounds
on $\Delta_n(c)$ in Section~\ref{sec:lower_bounds_empirical}.

Another line of work has sought to understand optimal rates of estimation for Wasserstein distances when the {\it densities}---rather than the cost---are smooth.
These works~\citep{Lia17,singh2018,WeeBer19} show that the plug-in empirical estimator $W_p(\mu_n, \nu_n)$ for $W_p(\mu, \nu)$ is suboptimal if $\mu$ and $\nu$ have smooth densities, but that replacing $\mu_n$ and $\nu_n$ by appropriate nonparametric density estimators suffices to obtain optimal rates of estimation.
Under similar conditions on $\mu$ and $\nu$, it is also possible to construct appropriate smooth estimators of the optimal map between $\mu$ and $\nu$~\citep{hutter2021}.
Our work takes a quite different perspective: rather than adding additional conditions on~$\mu$ and~$\nu$, we show that the rates of convergence of empirical estimators improve under additional smoothness conditions on the cost.

\subsection{Notation and Further Background on the Optimal Transport Problem}
\label{sec:background}
Our proofs make repeated use of the  Kantorovich dual formulation 
of the optimal transport problem (\cite{villani2008}, Theorem 5.10), which we now describe. 
Define for all $\varphi \in L^1(\mu)$
and $\psi \in L^1(\nu)$ the functional
$$J_{\mu,\nu}(\varphi,\psi) = \int \varphi d\mu + \int \psi d\nu.$$
The regularity condition~\ref{assm:global} is sufficient to imply 
\begin{equation}
\label{eq:kantorovich}
\calT_c(\mu,\nu) = \sup_{(\varphi,\psi) \in \Phi_c(\mu,\nu)} J_{\mu,\nu}(\varphi,\psi),
\end{equation}
where $\Phi_c(\mu,\nu)$ denotes the set of pairs $(\varphi,\psi) \in L^1(\mu) \times L^1(\nu)$ such that
$\varphi(x) + \psi(y) \leq c(x,y)$ for all $x \in \calX$ and
$y \in \calY.$
If we further assume $c(x,y) \leq c_1(x) + c_2(y)$	
for some $c_1 \in L^1(\mu)$ and $c_2\in L^1(\nu)$, then the supremum in equation~\eqref{eq:kantorovich}
is achieved. Any pair $(\varphi,\psi)$ achieving the supremum is called a pair of {\it optimal (Kantorovich) potentials}. 

We shall say that a function $g: \bbR^d \to \widebar \bbR$ taking values in the extended real
line $\widebar \bbR = \bbR \cup \{-\infty\}$ 
is {\it $c$-concave}~\citep{gangbo1996} if it is not identically $-\infty$, and if there exists a nonempty
set $\calA \subseteq \bbR^d \times \bbR$
such that
$$g(y) = \inf_{(x,\lambda)\in \calA} \big\{ c(x,y) - \lambda \big\}.$$
The canonical example of $c$-concave functions are {\it $c$-conjugates}, for which $\calA$
is the graph of a map $f$. Specifically,
given $f:\calX \to \widebar \bbR$ not identically $-\infty$, the $c$-conjugate of $f$ is given by 
\begin{equation}
\label{eq:c_conj}
f^c:\calY \to \widebar\bbR, \quad f^c(y) = \inf_{x \in \calX} \Big\{c(x,y) - f(x)\Big\}.
\end{equation}
Whenever a map $f$ is merely defined over a nonempty subset $\calX_0 \subseteq \calX$ in the sequel, 
we extend its definition to $\calX$ by setting $f(\calX\setminus\calX_0) = \{-\infty\}$. In this case, 
it is clear that the infimum in the above display can be restricted to $\calX_0$. 

When the supremum in the Kantorovich duality~\eqref{eq:kantorovich} is achieved by a pair $(\varphi,\psi) \in \Phi_c(\mu,\nu)$,
it is easy to see that $(\varphi,\varphi^c)$ also
lies in $\Phi_c(\mu,\nu)$ and can only increase the value of $J_{\mu,\nu}(\varphi,\psi)$.
It follows that $(\varphi,\varphi^c)$ is itself a pair of optimal Kantorovich potentials, and 
one obtains~\citep{villani2008},
\begin{equation}
\label{eq:kantorovich_c_conj}
\calT_c(\mu,\nu) = \sup_{\varphi\in L^1(\mu)} \int \varphi d\mu + \int \varphi^c d\nu.
\end{equation}
Notice that if $c(x,y) = -x^\top y$, then the definition~\eqref{eq:c_conj} of $c$-conjugate
reduces to $-(-f)^*$, where for any convex function $h$ on $\bbR^d$, 
$h^*(y) = \sup_{x \in \bbR^d} \{ x^\top y - h(x)\}$ denotes its Legendre-Fenchel transform. 
It is well known that if $h$ is also lower semi-continuous and not identically infinite,  
the supremum in the definition of $h^*(y)$ is achieved
by a point in its subdifferential; specifically, one has the relation
$$y \in \partial h(x) ~~ \Longleftrightarrow ~~ x \in \partial h^*(y) ~~ \Longleftrightarrow ~~ x^\top y = h(x) + h^*(y).$$
To derive analogous notions for $c$-concave functions, define the {\it $c$-superdifferential}
of a $c$-concave function $f:\calX \to\widebar\bbR$ by
$$\partial^c f = \left\{(x,y) \in \calX \times \calY: c(v,y) - f(v) \geq c(x,y) - f(x), \ \forall v \in \calX\right\}.$$
Furthermore, let $\partial^c f(x) = \{y \in \bbR^d: (x,y) \in \partial^c f\}$ and $\partial^c f(B) = \bigcup_{x \in B} \partial^c f(x)$,
for all $B \subseteq \calX$. The following Lemma summarizes the main properties of $c$-concave functions 
which we shall require.
Some of the statements which follow are weaker than necessary, but 
sufficient for our purposes.
\begin{lemma}
\label{lem:kantorovich_background}
Let $f: \calX \to \widebar\bbR$ be $c$-concave, and assume condition~\ref{assm:global}.
\begin{enumerate}
\item[(i)] (\cite{villani2008}, Proposition 5.8) We have, $f^{cc} = f$. 
\item[(ii)] (\cite{villani2003}, Remark 1.13) 
Assume the cost $c$ is bounded. Then, the supremum
in equation~\eqref{eq:kantorovich_c_conj} is achieved
by a $c$-concave function $\varphi \in L^1(\mu)$ such that 
$0 \leq \varphi \leq \norm{c}_\infty$ and $-\norm c_\infty \leq \varphi^c \leq 0.$ 
\item[(iii)] (\cite{gangbo1996}, Theorem 2.7) $\partial^c f$ is $c$-cyclically monotone, in the sense that
for any permutation $\sigma$ on $k\geq 1$ letters and any $(x_1, y_1), \dots, (x_k, y_k) \in  \partial^c f$,
$$\sum_{j=1}^k c(x_j, y_j) \leq \sum_{j=1}^k c(x_{\sigma(j)}, y_j).$$
\item[(iv)] (\cite{gangbo1996}, Proposition C.4) Assume further that $h$ is superlinear. 
For any given $x \in \bbR^d$, assume there exists a neighborhood
of $x$ over which $f$ is bounded. Then, the $c$-superdifferential $\partial^c f(x)$ is nonempty. Therefore, if $f$ is locally bounded over $\bbR^d$, 
it holds that for all $x,y \in \bbR^d$,
$$y \in \partial^c f(x) ~~ \Longleftrightarrow ~~ x \in \partial^c f^c(y) ~~ \Longleftrightarrow ~~ c(x,y) = f(x) + f^c(y).$$
In particular,  
$$f(x) =\inf_{y \in \partial^c f(x)} \big\{ c(x,y) - f^c(y)\big\},\quad
  f^c(y) =\inf_{x \in \partial^c f^c(y)} \big\{ c(x,y) - f(x)\big\}.$$
Furthermore, if $f$ is in fact an optimal Kantorovich potential for the optimal transport 
problem~\eqref{eq:kantorovich_c_conj} from $\mu$ to $\nu$, 
and if $\calT_c(\mu,\nu) < \infty$, 
then for any optimal coupling $\pi \in \Pi(\mu,\nu)$, $\supp(\pi) \subseteq \partial^c f$.
\end{enumerate}
\end{lemma}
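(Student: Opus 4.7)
The four items of this lemma compile standard facts from the duality theory of $c$-concave functions, and each is essentially the cited reference; my plan is to give a short verification of each, reserving more care for item (iv).

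For (i), I would use that any $c$-concave function $f$ is of the form $g^c$ for some auxiliary $g$: writing $f(y) = \inf_{(x,\lambda) \in \calA} \{c(x,y) - \lambda\}$ and setting $g(x) = \sup\{\lambda : (x,\lambda) \in \calA\}$ (with $g(x) = -\infty$ off the projection of $\calA$) yields $f = g^c$. Combined with the elementary identity $g^{ccc} = g^c$, which itself follows from two applications of the pointwise inequality $h \leq h^{cc}$ valid for any $h$, this gives $f^{cc} = g^{ccc} = g^c = f$.

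For (ii), given any admissible pair $(\varphi,\psi)$ realizing the dual supremum I would first replace $\psi$ by $\varphi^c$ (which satisfies $\varphi^c \geq \psi$ while still being admissible), then replace $\varphi$ by $\varphi^{cc}$, which is now $c$-concave. Boundedness of $c$ together with the definitions of $\varphi^{cc}$ and $\varphi^c$ forces both functions to have oscillation at most $\norm{c}_\infty$, so the sign normalizations $0 \leq \varphi \leq \norm{c}_\infty$ and $-\norm{c}_\infty \leq \varphi^c \leq 0$ are then obtained by subtracting a suitable constant (the pair remains optimal since the constants cancel in $J_{\mu,\nu}$).

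For (iii), the inequality is immediate from the definition of $\partial^c f$: for each $j$, choosing $v = x_{\sigma(j)}$ in the inequality defining $(x_j,y_j) \in \partial^c f$ gives $c(x_{\sigma(j)},y_j) - f(x_{\sigma(j)}) \geq c(x_j,y_j) - f(x_j)$; summing over $j$ and using that $\sigma$ is a permutation cancels the $f$-terms and yields the claim.

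For (iv), which is the technically deepest part and the main obstacle, I would follow the argument of \cite{gangbo1996}. The key step is nonemptiness of $\partial^c f(x)$: assuming $f$ is bounded in a neighborhood of $x$, the defining infimum for $f^c(y)$ along a sequence $y_n$ tending to infinity in a suitable direction is attained at a bounded set of minimizers, thanks to the superlinearity of $h$, which forces $c(v,y_n) - c(x,y_n) \to \infty$ whenever $\norm{v-x}$ is large. Extracting a cluster point produces an admissible $y \in \partial^c f(x)$. The three-way equivalence then follows by combining $f^c(y) \leq c(x,y) - f(x)$ (immediate from the definition of $f^c$) with the reverse inequality encoded in $y \in \partial^c f(x)$, together with part~(i) applied symmetrically to $f^c$. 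The infimum representations of $f$ and $f^c$ are then direct restatements, and the final claim $\supp(\pi) \subseteq \partial^c f$ for an optimal coupling $\pi$ follows from the complementary slackness identity $f(x) + f^c(y) = c(x,y)$, which holds $\pi$-a.e.\ by integrating the admissibility constraint against $\pi$ and comparing with $\calT_c(\mu,\nu)$.
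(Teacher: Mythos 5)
The paper itself offers no proof of this lemma: every item is quoted with a citation, so the only question is whether your sketches of the standard arguments are sound. Items (i) and (iii) are correct and are exactly the usual proofs. Item (ii) is essentially fine, but note that the phrase ``given any admissible pair realizing the dual supremum'' presupposes attainment, which is the nontrivial half of the cited statement; within this paper that is covered by the background fact recorded just before the lemma (the supremum in~\eqref{eq:kantorovich} is attained whenever $c(x,y)\leq c_1(x)+c_2(y)$ with $c_1\in L^1(\mu)$, $c_2\in L^1(\nu)$ --- take constants, since $c$ is bounded), and you should state explicitly that this is what you invoke. One more line is needed for the normalization: after replacing $(\varphi,\psi)$ by $(\varphi^{cc},\varphi^c)$ both functions have oscillation at most $\norm{c}_\infty$, and the additive constant must be chosen so that $\sup\varphi=\norm{c}_\infty$; shifting so that $\inf\varphi=0$ does not by itself give $\varphi^c\leq 0$.

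In (iv) the decisive step, nonemptiness of $\partial^c f(x)$, is misdescribed and would fail as written. What must be shown is that a minimizing sequence $(y_k)$ in the representation $f(x)=f^{cc}(x)=\inf_y\{c(x,y)-f^c(y)\}$ furnished by part (i) stays bounded; a cluster point $y$ then satisfies $c(x,y)-f^c(y)\leq f(x)$ by upper semicontinuity of $f^c$ (an infimum of continuous functions, $h$ being finite convex hence continuous), which combined with the trivial inequality $f(x)+f^c(y)\leq c(x,y)$ gives $y\in\partial^c f(x)$. Your sketch instead speaks of ``the defining infimum for $f^c(y)$ along a sequence $y_n$ tending to infinity'' and of ``extracting a cluster point,'' which is vacuous if $y_n\to\infty$; moreover the asserted mechanism, that superlinearity forces $c(v,y_n)-c(x,y_n)\to\infty$ ``whenever $\norm{v-x}$ is large,'' is false as stated (for $v$ on the far side of $x$ from $y_n$ the difference tends to $-\infty$) and is in any case unusable, since the only points $v$ at which $f$ is under control lie in a small neighborhood of $x$. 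The correct use of the two hypotheses is: if $\norm{y_k}\to\infty$, set $v_k=x+\delta(y_k-x)/\norm{y_k-x}$ with $\delta$ small enough that $v_k$ stays in the neighborhood where $f$ is bounded; then $f(v_k)\leq c(v_k,y_k)-f^c(y_k)$ and the minimizing property of $y_k$ give $c(x,y_k)-c(v_k,y_k)\leq f(x)-f(v_k)+o(1)$, which is bounded, whereas convexity of $h$ gives $c(x,y_k)-c(v_k,y_k)\geq \delta\,\langle g_k, w_k/\norm{w_k}\rangle$ with $w_k=v_k-y_k$ and $g_k\in\partial h(w_k)$, and superlinearity forces $\langle g_k,w_k/\norm{w_k}\rangle\geq (h(w_k)-h(0))/\norm{w_k}\to\infty$, a contradiction. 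Finally, in the last claim, ``$f(x)+f^c(y)=c(x,y)$ $\pi$-a.e.'' only yields $\pi(\partial^c f)=1$; to conclude $\supp(\pi)\subseteq\partial^c f$ you should add that the equality set $\{(x,y):f(x)+f^c(y)\geq c(x,y)\}$ is closed, because $f$ and $f^c$ are upper semicontinuous and $c$ is continuous.
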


We close this section with a summary of notational conventions used in the sequel.

{\bf Notation.} Given $a,b \in \bbR$, we write $a\vee b = \max\{a,b\}$ and $a\wedge b = \min\{a,b\}$. 
Given a   set $\Omega\subseteq \bbR^d$ and a function $f:\Omega \to \bbR$ which is differentiable to order $k \geq 1$, 
and a multi-index $\beta \in \bbN_0^d$, we write $|\beta| = \sum_{i=1}^d \beta_i$, 
and for all $|\beta|\leq k$, 
$D^\beta f = \partial^{|\beta|} f/\partial x_1^{\beta_1}\dots \partial x_d^{\beta_d}.$
Given $\alpha > 0$, let $\underline{\alpha}$ denote the largest integer strictly less
than $\alpha$ (for instance, $\underline{\alpha} = 0$ when $\alpha=1$). 
The H\"older space $\calC^\alpha(\Omega)$ is defined as the set
of functions $f:\Omega \to \bbR$ admitting at least $\underline{\alpha}$
continuous derivatives over the interior $\Omega^\circ$ of $\Omega$, which extend continuously  
up to the boundary of $\Omega$, and are such that the H\"older norm 
$$\norm f_{\calC^\alpha(\Omega)} = \sum_{j=0}^{\underline{\alpha}} \sup_{|\beta| = j} \|D^\beta f\|_{L^\infty(\Omega^\circ)}
 + \sum_{|\beta|=\underline{\alpha}} \sup_{\substack{x,y\in\Omega^\circ \\ x\neq y}} \frac{|D^\beta f(x) - D^\beta f(y)|}{\norm{x-y}^{\alpha-\underline{\alpha}}}$$
is finite. Notice that when $k \geq 1$ is an integer, $\calC^k(\Omega)$
is also commonly denoted $\calC^{k-1,1}(\Omega)$.  
Given a measure space $(\Omega, \calF, \nu)$, we interchangeably use the symbols
$\norm f_{L^p(\Omega)}$ and $\norm f_{L^p(\nu)}$  
to denote the $L^p$ norm $(\int_{\Omega} |f(x)|^p d\nu(x))^{1/ p}$ for any $1 \leq p \leq \infty$
and any Borel-measurable function $f:\Omega\to\bbR$. 
We drop the suffix $\Omega$ and simply
write $\calC^\alpha$ and $L^p$ when $\Omega$ is clear from context. 
Given a map $T:\Omega \to \Omega$,  the pushforward of $\nu$
under $T$ is denoted by $T_\# \nu = \nu(T^{-1}(\cdot))$. 
Given a vector $x \in \bbR^d$ and $1 \leq p \leq \infty$, 
we write $\norm x_{\ell_p} = (\sum_{i=1}^d |x_i|^p)^{1/ p}$.
When $p=2$, we drop the subscript $\ell_2$ and simply write $\norm\cdot$.
Given a square matrix $A=(a_{ij}) \in \bbR^{d  \times d}$, 
$\norm A_{\mathrm{op}} = \sup\{\norm{Ax}: x \in \bbR^d, \norm x=1\}$
denotes its operator norm, and $\norm A_\infty = \max_{1 \leq i,j \leq d} |a_{ij}|$ its entrywise $\ell_\infty$ norm.
The convolution of two functions $f,g:\bbR^d \to \bbR$ is denoted $(f\star g)(x) = \int f(y) g(x-y) dy$,
and that of $f$ and a Borel probability measure $\mu$ on $\bbR^d$ is given
by $(\mu\star f)(x) = \int f(x-y)d\mu(y)$, for all $x \in \bbR^d$.
The closed ball centered at $x \in \bbR^d$ of radius $r \geq 0$ is denoted
$B_{x, r} = \{x \in \bbR^d: \norm x \leq r\}$, and we drop the subscript $r$ when it equals 1: $B_x = B_{x,1}$. 
$\calL$ denotes 	the Lebesgue measure on $\bbR^d$. 

A {\it universal} constant is any constant $C > 0$
which is tacitly permitted to depend on $d, \calX, \calY$, and 
may also depend on additional data when specified. 
Given sequences of nonnegative real numbers 
$(a_n)_{n=1}^\infty$ and $(b_n)_{n=1}^\infty$, we write $a_n \lesssim b_n$ if there exists a universal
constant $C > 0$ such that $a_n \leq C b_n$ for all $n \geq 1$. We also write $a_n \asymp b_n$ if $b_n \lesssim a_n \lesssim b_n$. 

Finally, as discussed in Section~\ref{sec:contributions}, we assume $d \geq 5$
throughout the remainder of this manuscript, unless otherwise stated.

\section{Upper Bounds for Compactly Supported Measures}
\label{sec:upper_bounds_compact}
We begin by bounding the rate of convergence of the empirical optimal transport
cost in the special case where $\calX$, $\calY$
and $\calZ = \calX - \calY  = \{x-y: x\in \calX, y \in \calY\}$ satisfy the following condition. 
\begin{enumerate}[leftmargin=1.65cm,listparindent=-\leftmargin,label=(\textbf{S\arabic*})]   
\item \label{assm:S1} 
$\calX, \calY \subseteq \bbR^d$ are convex and compact sets with nonempty interior. 
Furthermore, we have $\calX, \calY,\calZ 
\subseteq B_{0,1}$. 
\end{enumerate}
The assumption of compactness of $\calX$ and $\calY$ will be relaxed
in the following section, under concentration and anticoncentration conditions on the measures.
Once $\calX$ and $\calY$ are assumed compact, notice that the final assumption of~\ref{assm:S1} can always be satisfied 
up to rescaling and recentering. Furthermore, we recall that the supports of $\mu$ and $\nu$
are merely assumed to be contained in $\calX$ and $\calY$, and thus need not be convex themselves. 

We shall also assume throughout this section that the cost $c$
satisfies condition~\ref{assm:global} and the following
smoothness condition.
\begin{enumerate}[leftmargin=1.65cm,listparindent=-\leftmargin,label=(\textbf{H\arabic*}),start=1] 
\item \label{assm:bded_smoothness} 
There exists $\alpha \in (0,2]$ and a convex open set $\calZ_1$ such that $\calZ \subseteq \calZ_1 \subseteq B_{0,2}$,
and $h \in \calC^\alpha(\calZ_1)$. 
Furthermore, we have $0 \leq h \leq 1$ on $\calZ_1$.
We write $\Lambda := 1 \vee \norm h_{\calC^\alpha(\calZ_1)}<\infty$. 
\end{enumerate} 
For any measures $\mu \in \calP(\calX)$ and 
$\nu \in \calP(\calY)$, recall that
$X_1, \dots, X_n \sim \mu$ and $Y_1, \dots, Y_n \sim \nu$ denote i.i.d.
samples,  with corresponding empirical measures
$\mu_n = \frac 1 n \sum_{i=1}^n \delta_{X_i}$ and 
$\nu_n = \frac 1 n \sum_{i=1}^n \delta_{Y_i}$.
The main result of this section is now stated as follows.
\begin{theorem} 
\label{thm:ub_general}
Assume conditions \ref{assm:S1}, \ref{assm:global}, and \ref{assm:bded_smoothness}. Then, 
there exists a constant $C > 0$ depending only on $d, \alpha,\calX, \calY, \calZ_1$ such that
$$\sup_{\substack{\mu\in\calP(\calX) \\ \nu \in \calP(\calY)}} \bbE_{\mu,\nu}\big| \calT_c(\mu_n,\nu_n) - \calT_c(\mu,\nu)\big| \leq
C\Lambda n^{-\alpha/d}.$$
\end{theorem}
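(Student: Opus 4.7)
The plan is to reduce the estimation problem to a bound on the expected supremum of an empirical process indexed by Kantorovich potentials, and then exploit the smoothness of $c$ to control the complexity of this class. By condition~\ref{assm:bded_smoothness}, the cost is uniformly bounded on $\calZ_1$, so Lemma~\ref{lem:kantorovich_background}(ii) lets me restrict the Kantorovich dual to pairs $(\varphi,\varphi^c)$ with $\varphi$ a $c$-concave function taking values in $[0,1]$ and $\varphi^c \in [-1,0]$. Calling this class $\Phi_c^\star$, I would write
\[
\calT_c(\mu_n,\nu_n) - \calT_c(\mu,\nu) \le \sup_{\varphi\in \Phi_c^\star}\Bigl\{\int \varphi\,d(\mu_n-\mu)+\int \varphi^c\,d(\nu_n-\nu)\Bigr\},
\]
with the reverse inequality obtained symmetrically by swapping $(\mu,\nu)$ and $(\mu_n,\nu_n)$ in the duality formula. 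Taking expectations and applying the triangle inequality reduces the problem to bounding $\bbE\sup_{\varphi\in\Phi_c^\star}|(\mu_n-\mu)\varphi|$ and the analogous quantity for $\varphi^c$ on the $\nu$-side.

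The core task is then to establish the right regularity of functions in $\Phi_c^\star$, so that known empirical process bounds apply. I would split into two smoothness regimes. When $\alpha\in(0,1]$, the inequality $|h(z)-h(z')|\le \Lambda\|z-z'\|^\alpha$ on $\calZ_1$ transfers directly through the infimum in the definition of $c$-conjugation: for any $c$-concave $\varphi$, both $\varphi$ and $\varphi^c$ are $\alpha$-H\"older on $\calX$ and $\calY$ respectively, with H\"older constant controlled by $\Lambda$. When $\alpha\in(1,2]$, the second-order H\"older norm of $h$ implies that $h$ is semi-concave on $\calZ_1$, so each map $x\mapsto c(x,y)-\lambda$ is semi-concave with modulus bounded by $\Lambda$; taking infima preserves this, and one obtains that $-\varphi^c$ (and by symmetry $-\varphi$) is semi-concave on its domain with a constant scaling linearly in $\Lambda$. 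In particular, these potentials are uniformly Lipschitz (with constant depending only on $d,\calX,\calY,\Lambda$), and lie in a class that is a bounded perturbation of a concave function.

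With regularity in hand, I would invoke the classical covering-number bounds: for $\alpha\in(0,1]$, the space of bounded $\alpha$-H\"older functions on a compact set in $\bbR^d$ has $L^\infty$ metric entropy of order $\epsilon^{-d/\alpha}$; for $\alpha\in(1,2]$, the results of Bronshtein~\citep{bronshtein1976} and Guntuboyina--Sen~\citep{guntuboyina2012b} give $L^\infty$ entropy of order $\epsilon^{-d/2}$ for uniformly bounded, uniformly Lipschitz semi-concave functions on a compact convex set. Feeding either bound into a standard Dudley-style chaining/expected-supremum estimate for the empirical process (using $d>2\alpha$, which holds since $d\ge 5$ and $\alpha\le 2$) produces the rate $n^{-\alpha/d}$ on each side of the duality, with the prefactor scaling linearly in $\Lambda$. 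Combining the two one-sided bounds gives the claim.

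The main obstacle I anticipate is the semi-concavity step for $\alpha\in(1,2]$: I need the semi-concavity modulus of $\varphi^c$ to depend only on $\Lambda$ and the geometry of $\calZ_1$, not on $\varphi$, and I need the resulting class of potentials to fit cleanly into the Bronshtein/Guntuboyina--Sen framework despite being defined only on the (non-convex) support of $\mu$. The plan is to first extend $\varphi^c$ to the convex hull by a standard semi-concave envelope, so the class of extended potentials becomes a uniformly Lipschitz, uniformly bounded family whose negatives are concave up to a fixed quadratic correction, at which point the entropy estimate applies directly. The H\"older case is essentially routine once the regularity is established.
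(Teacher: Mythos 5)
Your reduction to an empirical process over uniformly bounded $c$-concave potentials, the H\"older-regularity argument for $\alpha\in(0,1]$ with the $\epsilon^{-d/\alpha}$ entropy bound, and the semi-concavity-plus-Bronshtein argument at $\alpha=2$ all match the paper's proof (its Cases 3 and 1). The gap is the intermediate regime $\alpha\in(1,2)$, where your key regularity step fails: $h\in\calC^\alpha(\calZ_1)$ with $\alpha<2$ only gives $\nabla h$ an $(\alpha-1)$-H\"older modulus, and this does \emph{not} imply semi-concavity with a quadratic modulus controlled by $\Lambda$. Concretely, for $h(z)=\|z\|^\alpha$ with $1<\alpha<2$ the radial second derivative behaves like $\alpha(\alpha-1)\|z\|^{\alpha-2}\to\infty$ near the origin, so no map of the form $h(z)-\tfrac{C}{2}\|z\|^2$ with $C$ depending only on $\Lambda$ is concave on $\calZ_1$; the potentials then inherit only a fractional semi-concavity modulus $\sim\Lambda r^{\alpha-1}$, and such a class is not ``concave up to a fixed quadratic correction,'' so the Bronshtein/Guntuboyina--Sen $\epsilon^{-d/2}$ entropy bound does not apply to it. There is also a sanity check showing the step cannot be repaired as stated: if it did go through, chaining would yield the rate $n^{-2/d}$ for every $\alpha\in(1,2]$, contradicting the paper's matching lower bound (Proposition~\ref{prop:lb_empirical}, attained e.g.\ by $h=\|\cdot\|^\alpha$ with overlapping supports), which shows the empirical estimator is genuinely no faster than $n^{-\alpha/d}$ in this regime.

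The paper closes this regime by a different idea (its Case 2, via Lemma~\ref{lem:cost_convolution}): mollify the cost, $h_\sigma=h\star K_\sigma$, which satisfies $\|h-h_\sigma\|_{L^\infty(\calZ)}\le\Lambda\sigma^\alpha$ while being genuinely $\calC^2$ with $\|h_\sigma\|_{\calC^2}\lesssim\Lambda\sigma^{\alpha-2}$; one then applies the $\alpha=2$ analysis to $c_\sigma$, incurring the two error terms $\Lambda\sigma^\alpha+\Lambda\sigma^{\alpha-2}n^{-2/d}$, and balances $\sigma\asymp n^{-1/d}$ to recover $\Lambda n^{-\alpha/d}$. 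You would need this (or an equivalent device, e.g.\ entropy bounds for classes with fractional semi-concavity modulus, which are not off-the-shelf) to complete your argument. As a minor remark, your worry about extending potentials from the support of $\mu$ is not an issue here: under \ref{assm:S1} the sets $\calX,\calY$ are convex and the $c$-conjugates are defined on all of $\calX$ and $\calY$ directly, so no envelope extension is needed.
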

Theorem~\ref{thm:ub_general} proves that the convergence rates anticipated in Section~\ref{sec:example_location},
for measures differing only in mean, 
in fact hold for all compactly supported measures.
In particular, the $n^{-\alpha/d}$ rate of convergence is achievable
as soon as $h \in \calC^\alpha$, for $\alpha \in (0,2]$, though is not generally claimed to improve further when $\alpha > 2$. 
For instance, the quadratic cost $\norm\cdot^2$ lies in $\calC^\infty$, 
but one cannot hope for a faster convergence rate than $n^{-2/d}$. Indeed, 
we derive matching lower bounds in Section~\ref{sec:lower_bounds} under closely related assumptions on the cost function $c$,
which imply that the upper bound of Theorem~\ref{thm:ub_general} is generally unimprovable.

A careful investigation of our proof reveals that Theorem~\ref{thm:ub_general}
in fact continues to hold for nonconvex costs $h$. We nevertheless
prefer to retain the assumption of convexity in condition~\ref{assm:global}
since it is required for the remainder of our main results; in particular, 
we do not claim that the convergence rate in Theorem~\ref{thm:ub_general}
is sharp when $h$ is not convex.

By letting $\alpha$ vanish, Theorem~\ref{thm:ub_general} suggests that 
the empirical optimal transport cost does not generally converge at any polynomial
rate for cost functions which fail to be uniformly H\"older continuous. 
Indeed, absent any smoothness assumptions on $c$, 
$\calT_c(\mu_n,\nu_n)$ may not even
converge in $L^1(\bbP)$, as can be seen by taking $c$ to be
the Hamming metric. In this case, 
 $\calT_c(\mu_n,\nu_n)$ is simply the Total Variation 
distance between $\mu_n$ and $\nu_n$, which almost surely
equals unity when $\mu$ and $\nu$ are absolutely continuous with 
respect to the Lebesgue~measure.

As discussed in Section~\ref{sec:introduction},
perhaps the most widely-used cost functions satisfying conditions~\ref{assm:global} and \ref{assm:bded_smoothness}
are norms over $\bbR^d$ raised to a power greater than one. 
We illustrate the conclusion of Theorem~\ref{thm:ub_general}
for such an example. 
\begin{corollary} [Powers of $\ell_r$ Norms]
\label{cor:lp_norms} 
Let $\calX, \calY$ satisfy condition~\ref{assm:S1}, and define the cost 
$c_{p,r}(x,y) = \norm{x-y}_{\ell_r}^p$ for all $x \in \calX, y \in \calY$
and $p,r \geq 1$. Let $\mu \in \calP(\calX)$, $\nu \in \calP(\calY)$.
\begin{enumerate} 
\item[(i)] We have for all $p,r \geq 1$, $\bbE \big| \calT_{c_{p,r}}(\mu_n,\nu_n) - \calT_{c_{p,r}}(\mu,\nu)| \lesssim n^{-(2\wedge p \wedge r)/d}$. 
In particular, specializing to  $r=2$, 
$$\bbE \big| W_p^p(\mu_n,\nu_n) -W_p^p(\mu,\nu)\big| \lesssim 
\begin{cases}
n^{-p/d}, & 1 \leq p < 2 \\
n^{-2/d}, & 2 \leq p < \infty.
\end{cases}$$
\item[(ii)] If $\calX,\calY \subseteq \bbR^d$ are disjoint, then for all $p \geq 1$ and $r \geq 2$,
\begin{equation*} 
\bbE \big| \calT_{c_{p,r}}(\mu_n,\nu_n) - \calT_{c_{p,r}}(\mu,\nu)\big| \lesssim n^{-2/d}.
\end{equation*}
\end{enumerate}
\end{corollary}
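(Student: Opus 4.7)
The plan is to deduce Corollary~\ref{cor:lp_norms} as an essentially immediate application of Theorem~\ref{thm:ub_general}, once I identify the correct H\"older exponent $\alpha$ in condition~\ref{assm:bded_smoothness} for the cost $c_{p,r}(x,y)=\|x-y\|_{\ell_r}^p$. The cost takes the form $c(x,y)=h(x-y)$ with $h(z)=\|z\|_{\ell_r}^p$; condition~\ref{assm:global} is straightforward to check ($h$ is nonnegative, even, and continuous, and convex because $t\mapsto t^p$ is convex and non-decreasing on $[0,\infty)$ for $p\geq 1$, composed with a norm). The normalization $0\leq h\leq 1$ in~\ref{assm:bded_smoothness} is obtained by rescaling $h$ by the constant $M:=\sup_{B_{0,2}} h$, which depends only on $d,p,r$; by homogeneity of $\calT_c$ in the cost, this rescaling only affects the multiplicative constant in the final bound. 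So the substantive step is to determine the H\"older regularity of $h$ on a convex open set $\calZ_1\subseteq B_{0,2}$ containing $\calZ$.

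For part~(i), I claim $h\in \calC^{p\wedge r\wedge 2}(\calZ_1)$, which upon insertion into Theorem~\ref{thm:ub_general} yields the announced rate $n^{-(2\wedge p\wedge r)/d}$. Writing $h(z)=u(z)^{p/r}$ with $u(z)=\sum_{i=1}^d |z_i|^r$, I split according to the size of $r$. For $r\geq 2$, each scalar map $t\mapsto|t|^r$ lies in $\calC^2(\bbR)$ (since $|t|^r=t^2|t|^{r-2}$ with continuous factor at $t=0$), so $u\in \calC^2(\bbR^d)$. Smoothness of $u^{p/r}$ is then only obstructed at $\{u=0\}=\{0\}$, where $u(z)\asymp \|z\|^r$ and hence $h(z)\asymp \|z\|^p$, giving H\"older exponent $p\wedge 2$ at the origin and $\calC^2$ elsewhere. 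For $1\leq r<2$, the map $t\mapsto|t|^r$ only lies in $\calC^r$ and no higher (due to the singularity of its $\underline r$-th derivative at $t=0$), so $u\in\calC^r$; the same near-origin/far-from-origin decomposition then yields $h\in\calC^{p\wedge r}$. Both regimes collapse to $\alpha=p\wedge r\wedge 2$, and specialization to $r=2$ recovers the Wasserstein claim after writing $W_p^p=\calT_{c_{p,2}}$.

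For part~(ii), disjointness of $\calX$ and $\calY$ forces the compact set $\calZ=\calX-\calY$ to be bounded away from the origin, so I may choose a convex open neighborhood $\calZ_1\subset B_{0,2}$ of $\calZ$ with $\inf_{\calZ_1}\|z\|>0$. For $r\geq 2$ one still has $u\in\calC^2(\bbR^d)$, and on $\calZ_1$ the value $u(z)$ is bounded below by a positive constant, so post-composition with the $\calC^\infty$ map $t\mapsto t^{p/r}$ (restricted to a compact positive interval containing the range of $u$ on $\calZ_1$) preserves $\calC^2$ regularity, giving $h\in\calC^2(\calZ_1)$. Theorem~\ref{thm:ub_general} applied with $\alpha=2$ then yields the stated $n^{-2/d}$ bound.

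The main obstacle is the H\"older regularity verification for $1\leq r<2$ in part~(i), where the loss of smoothness of $t\mapsto|t|^r$ at coordinate hyperplanes has to be tracked alongside the singularity of $u^{p/r}$ at the origin; this is a careful but essentially routine case analysis of supremum bounds on mixed partial derivatives and H\"older seminorms. Once $\alpha=p\wedge r\wedge 2$ is established in~\ref{assm:bded_smoothness}, the corollary follows directly from Theorem~\ref{thm:ub_general}.
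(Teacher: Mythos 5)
Your reduction is exactly the paper's: check \ref{assm:global}, normalize so $0\leq h\leq 1$, verify the H\"older condition \ref{assm:bded_smoothness} with $\alpha = 2\wedge p\wedge r$ (and, for part (ii), use disjointness of $\calX,\calY$ to pick $\calZ_1$ bounded away from the origin and get $\alpha=2$), then invoke Theorem~\ref{thm:ub_general}. Part (ii) of your argument is complete as sketched: for $r\geq 2$ the map $u(z)=\sum_i|z_i|^r$ is $\calC^2$ on bounded sets, and composing with $t\mapsto t^{p/r}$ on a compact interval bounded away from $0$ preserves $\calC^2$, which is what the paper establishes via the explicit Hessian bound away from the origin.

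The gap is in part (i), which is the entire substance of the corollary. The inference ``$h(z)\asymp \norm{z}^p$ near the origin, hence H\"older exponent $p\wedge 2$ at the origin and $\calC^2$ elsewhere'' is not a valid argument as stated: the growth rate of $h$ at its singular point does not by itself control the H\"older seminorm of $\nabla h$ across that point, and ``$\calC^2$ elsewhere'' is not uniform (for $p<2$ the Hessian blows up like $\norm{z}_{\ell_r}^{p-2}$ as $z\to 0$). To make this rigorous you need a quantitative bound such as $\|\nabla^2 h(z)\|\lesssim \norm{z}_{\ell_r}^{p-2}$ for $r\geq 2$, followed by the standard two-case gluing argument (compare $\norm{x-y}$ with $\norm{x}$) to conclude that $\nabla h$ is $(p-1)$-H\"older globally; and for $1\leq r<2$ the singularities sit on all coordinate hyperplanes, where no simple composition rule for H\"older exponents above $1$ applies, so one must work from the explicit formula $\partial_l h(z) = p\, z_l|z_l|^{r-2}\norm{z}_{\ell_r}^{p-r}$ and run the case analysis on $p\gtrless r$ and $r\gtrless 2$. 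That computation is precisely the paper's proof of the corollary; your proposal names it as ``routine'' and defers it, so as written the key regularity claim $h\in\calC^{2\wedge p\wedge r}(\calZ_1)$ is asserted rather than proved. The claim is true and your decomposition $h=u^{p/r}$ can be pushed through, but the missing estimates are exactly where the work lies.
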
 
The proof is deferred to Appendix~\ref{app:lr_norms}. 
Corollary~\ref{cor:lp_norms}(i) follows from the fact that $\norm\cdot^p_{\ell_r} \in \calC^{2\wedge p\wedge r}(\calZ_1)$ 
for any bounded open set $\calZ_1$, for all $p,r \geq 1$. 
When $r \geq 2$, notice that $\norm\cdot_{\ell_r}$ is smooth away
from the origin, so that the condition $\norm\cdot_{\ell_r}^p \in \calC^2(\calZ_1)$
can be satisfied for any $p \geq 1$
whenever the closed set $\calZ \subseteq \calZ_1$ 
does not contain the point zero. This observation leads to Corollary~\ref{cor:lp_norms}(ii).
This last point implies the rather surprising fact that for all measures $\mu$ and~$\nu$ admitting
disjoint and compact support, one has
\begin{equation}
\bbE \big| W_1(\mu_n,\nu_n) - W_1(\mu,\nu) \big| \lesssim n^{-2/d}.
\end{equation}
When the measures $\mu$ and $\nu$ are not vanishingly close, Corollary~\ref{cor:lp_norms} also translates
into convergence rates for empirical Wasserstein distances.
\begin{corollary}[Wasserstein Distances]
\label{cor:wasserstein}
Let $p \geq 1$. Let $\calX, \calY$ satisfy condition~\ref{assm:S1}, and let $\mu \in \calP(\calX)$, $\nu \in \calP(\calY)$.
Assume $W_p(\mu,\nu) \geq \delta_0$, for some constant $\delta_0 > 0$. Then,
\begin{equation}
\label{eq:cor_wasserstein}
\bbE \big| W_p(\mu_n,\nu_n) - W_p(\mu,\nu)\big| \lesssim \delta_0^{1-p} \begin{cases}
n^{-p/d}, & 1 \leq p < 2 \\
n^{-2/d}, & 2 \leq p < \infty.
\end{cases}
\end{equation}
 \end{corollary}
\begin{proof}
By the numerical inequality $|x-y| {\leq} y^{1-p}|x^p - y^p| $ for all $x,y \geq 0$, $p \geq 1$, one~has
$$\bbE |W_p(\mu_n,\nu_n) - W_p(\mu,\nu)| 
\leq \delta_0^{1-p} \bbE \big| W_p^p(\mu_n,\nu_n) - W_p^p(\mu,\nu)\big|.$$
The claim thus follows from Corollary~\ref{cor:lp_norms}.
\end{proof}

\subsection{Proof of Theorem~\ref{thm:ub_general}}  
\label{sec:pf_thm_compact} 
We divide our argument into three cases.

\subsubsection{Case 1: $\alpha = 2$}
Under conditions~\ref{assm:S1},~\ref{assm:global} and~\ref{assm:bded_smoothness},
it follows from Lemma~\ref{lem:kantorovich_background}(ii)
that there exist Kantorovich potentials $\varphi_n:\calX \to \bbR$ and $\psi_n: \calY \to \bbR$
such that 
%
$\calT_c(\mu_n,\nu_n) = J_{\mu_n,\nu_n}(\varphi_n, \psi_n)$ and $|\varphi_n|, |\psi_n| \leq 1$.
Furthermore, since $\mu$ and $\nu$ are compactly supported, it follows
immediately from the definition of the $c$-conjugate that
$(\varphi_n,\psi_n) \in \Phi_c(\mu,\nu)$, whence
\begin{align} 
\label{eq:pf_bded_supp_step}
\nonumber 
\calT_c(\mu,\nu)
 &= \sup_{(\varphi, \psi)\in\Phi_c(\mu,\nu)} J_{\mu,\nu}(\varphi, \psi)  \\
 &\geq  J_{\mu,\nu}(\varphi_n, \psi_n) = J_{\mu_n,\nu_n}(\varphi_n, \psi_n)
  + \int \varphi_nd(\mu-\mu_n) + \int \psi_nd(\nu-\nu_n).
\end{align}
On the other hand, recalling that $\calT_c(\mu_n, \nu_n) = J_{\mu_n,\nu_n}(\varphi_n,\psi_n)$, we derive
\begin{equation}
\label{eq:pf_main_emp_proc_red}
\calT_c(\mu_n,\nu_n) - \calT_c(\mu,\nu) \leq \int \varphi_nd (\mu_n-\mu) + \int \psi_n d(\nu_n-\nu).
\end{equation}
Our goal is now to bound the empirical processes arising on the right-hand side of the above display.
Due to the compactness of $\calX$ and $\calY$, it can be deduced from \cite{gangbo1996} that
$\varphi_n$ and $\psi_n$ are  Lipschitz and semi-concave. 
The following Lemma is an analogue of their results with explicit constants, 
whose proof is included in Appendix~\ref{app:semi_convex_compact} for completeness.
\begin{lemma}
\label{lem:semi_concave}
Assume conditions \ref{assm:S1} and \ref{assm:global}, and that
condition~\ref{assm:bded_smoothness} holds with $\alpha = 2$.
Then the maps
$$\tilde\varphi_n: x \in \calX \longmapsto  \varphi_n(x) - \frac{\Lambda}{2} \norm x^2 ,\quad
  \tilde\psi_n: y \in \calY \longmapsto \psi_n(y) - \frac{\Lambda}{2} \norm y^2$$
are concave and $(2\Lambda)$-Lipschitz.
Furthermore,  $|\tilde\varphi_n|, |\tilde\psi_n| \leq 2 \Lambda$. 
\end{lemma}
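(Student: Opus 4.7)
The plan is to realize $\tilde\varphi_n$ (and symmetrically $\tilde\psi_n$) as a pointwise infimum of concave, Lipschitz functions, and then read off each of the three claims from properties of this envelope. Since by Lemma~\ref{lem:kantorovich_background}(ii) the potential $\varphi_n$ may be taken $c$-concave with $0 \leq \varphi_n \leq 1$, the involutivity identity $\varphi_n^{cc} = \varphi_n$ from Lemma~\ref{lem:kantorovich_background}(i) gives the envelope representation
\[
\varphi_n(x) = \inf_{y \in \calY}\big\{ h(x-y) - \varphi_n^c(y)\big\}, \qquad x \in \calX.
\]
Subtracting the quadratic $\tfrac{\Lambda}{2}\|x\|^2$ from both sides yields $\tilde\varphi_n(x) = \inf_{y\in\calY} g_y(x)$, where
\[
g_y(x) := h(x-y) - \tfrac{\Lambda}{2}\|x\|^2 - \varphi_n^c(y).
\]

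For concavity, I would leverage the $\calC^2$ assumption quantitatively: the bound $\|h\|_{\calC^2(\calZ_1)} \leq \Lambda$ controls the operator norm of the Hessian of $h$ by (a constant multiple of) $\Lambda$ on $\calZ_1 \supseteq \calZ$, so
\[
D^2 g_y(x) = D^2 h(x-y) - \Lambda I_d \preceq 0, \qquad x \in \calX.
\]
Each $g_y$ is therefore concave on $\calX$, and since the pointwise infimum of concave functions is concave, so is $\tilde\varphi_n$. The argument for $\tilde\psi_n$ is identical by symmetry.

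For the remaining bounds, the sup-norm estimate follows immediately from $|\varphi_n| \leq 1 \leq \Lambda$ and $\calX \subseteq B_{0,1}$, giving $|\tilde\varphi_n(x)| \leq \Lambda + \Lambda/2 \leq 2\Lambda$. For the Lipschitz constant, the gradient of each $g_y$ is $\nabla h(x-y) - \Lambda x$, whose Euclidean norm on $\calX \subseteq B_{0,1}$ is at most $\|\nabla h(x-y)\| + \Lambda \leq 2\Lambda$, using the $\calC^2$ bound on $\|\nabla h\|$. Each $g_y$ is therefore $2\Lambda$-Lipschitz on $\calX$, and the infimum inherits this constant via the elementary inequality $|\inf_y g_y(x) - \inf_y g_y(x')| \leq \sup_y |g_y(x) - g_y(x')|$.

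The most delicate step is setting up the envelope representation: one must ensure that the infimum effectively runs over $\calY$ rather than an arbitrary index set, which is handled by the extension convention stated below equation~\eqref{eq:c_conj} and the boundedness of $\varphi_n^c$ from Lemma~\ref{lem:kantorovich_background}(ii). Once that envelope is in place, all three conclusions follow from a routine Hessian calculation and careful tracking of constants, and no further technical subtleties are expected.
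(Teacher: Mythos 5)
Your proposal is correct and takes essentially the same route as the paper: both subtract the quadratic $\frac{\Lambda}{2}\|x\|^2$ and realize $\tilde\varphi_n$ (via $\varphi_n=\varphi_n^{cc}$, i.e.\ the envelope over $y\in\calY$) as a pointwise infimum of concave, $2\Lambda$-Lipschitz functions of $x$, with the same constant tracking and the same trivial sup-norm bound. The only cosmetic difference is that you certify concavity through the pointwise Hessian bound $D^2h(x-y)\preceq \Lambda I_d$, whereas in the paper's notation $\calC^2(\calZ_1)=\calC^{1,1}(\calZ_1)$ only guarantees a $\Lambda$-Lipschitz gradient (so $D^2h$ need not exist everywhere); the paper instead deduces concavity of $h-\frac{\Lambda}{2}\|\cdot\|^2$ from monotonicity of its negated gradient, a one-line patch to your argument.
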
 
For any $L,U > 0$, let $\calF_{L,U}(K)$ denote the set of $L$-Lipschitz convex functions $f : K \to \bbR$ over a convex set $K \subseteq \bbR^d$, such that 
$|f| \leq U$. Recalling the convexity of $\calX$ and $\calY$ under condition~\ref{assm:S1}, define 
$$\Delta_n = \sup_{f \in \calF_{1,1}(\calX)} \int f d(\mu_n-\mu) + \sup_{g \in\calF_{1,1}(\calY)} \int g d(\nu_n-\nu).$$
By Lemma~\ref{lem:semi_concave}, we have $(-\tilde\varphi_n/2\Lambda) \in \calF_{1,1}(\calX)$
and $(-\tilde\psi_n/2\Lambda) \in \calF_{1,1}(\calY)$,
thus together with equation~\eqref{eq:pf_main_emp_proc_red}  we obtain
\begin{align} 
\calT_c(\mu_n,\nu_n) - \calT_c(\mu,\nu) \leq 
  2\Lambda \Delta_n + \frac{\Lambda}{2}\int  \norm \cdot^2 d\big((\mu_n-\mu) + (\nu_n-\nu)\big).
 \label{eq:pf2_step1}
\end{align}  
On the other hand, lower bounds on $\calT_c(\mu_n,\nu_n) - \calT_c(\mu,\nu)$ are simple to obtain. 
As before, there exists a pair of optimal Kantorovich potentials 
$(\varphi_0, \psi_0) \in \Phi_c(\mu, \nu)$ such that $|\varphi_0| \vee |\psi_0| \leq 1$ and 
$\calT_c(\mu,\nu) = J_{\mu,\nu}(\varphi_0,\psi_0)$. Therefore, 
\begin{align*}
\calT_c(\mu_n,\nu_n)
  \geq J_{\mu_n,\nu_n}(\varphi_0,\psi_0)
  = \calT_c(\mu,\nu)  + \int \varphi_0 d(\mu_n-\mu) + \int \psi_0 d(\nu_n-\nu).
  \end{align*}
Combining the previous two displays, we deduce
\begin{align}
\nonumber
\bbE  \big| \calT_c(\mu_n,\nu_n)  - \calT_c(\mu,\nu) \big| 
\nonumber &\leq 2\Lambda\bbE[\Delta_n] +
      \frac \Lambda 2\bbE\left|\int \norm \cdot^2 d\big((\mu_n-\mu) + (\nu_n-\nu)\big) \right|  \\ &
     {+}   \bbE\left|\int  \varphi_0 d(\mu_n-\mu)\right| 
      {+}   \bbE\left|\int  \psi_0 d(\nu_n-\nu)\right| 
\lesssim\Lambda \Big( \bbE[\Delta_n] + n^{-1/2}\Big),
 \label{eq:emp_proc_red}
\end{align}
where the final bound is a straightforward consequence of Chebyshev's inequality,
and where we have used the fact that $|\varphi_0|,|\psi_0| \leq 1$. It thus remains to bound
$\bbE[\Delta_n]$. This last is a sum of expected suprema of  empirical processes indexed by
convex Lipschitz functions, upper bounds for which can be obtained
via Dudley's chaining technique~\citep{dudley2014} in terms of the metric entropy of the class $\calF_{L,U}(K)$. 
Specifically, recall that for all $\epsilon > 0$, the $\epsilon$-metric entropy of a set $A$ contained in a metric space $(\calX, \eta)$
is  the logarithm of the $\epsilon$-covering number $N(\epsilon, A,\eta)$ of $(A, \eta)$, defined by
$$N(\epsilon, A,\eta) = \inf\big\{ N\geq 1: \exists \{x_1, \dots, x_N\} \subseteq \calX, \forall x \in A, \exists 1 \leq i \leq N: \eta(x,x_i) \leq \epsilon\big\}.$$
The following version of Dudley's bound will be sufficient for our purposes, and can be deduced for instance from Lemma~16 of~\cite{vonluxburg2004} (see also Lemma~3.2 of~\cite{vandegeer2000}). 
\begin{lemma}[\cite{vonluxburg2004}] 
\label{lem:chaining}
Let $\calG$ be a set of real-valued measurable functions on $\bbR^d$. Then,  
\begin{align}
\label{eq:chaining}
\bbE \left[ \sup_{g \in \calG} \int g d(\mu_n-\mu)\right]
 &\lesssim \bbE\left[\inf_{\tau > 0}\left\{\tau  + \frac 1 {\sqrt n}\int_{\tau}^\infty \sqrt{\log N(\epsilon, \calG, L^2(\mu_n))} d\epsilon\right\}\right].
 \end{align} 
\end{lemma}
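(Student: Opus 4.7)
The plan is to derive this bound via the classical two-step recipe in empirical process theory: symmetrization followed by chaining applied conditionally on the sample. First I would apply the standard symmetrization inequality to reduce to a Rademacher process,
$$\bbE \sup_{g \in \calG} \int g\, d(\mu_n - \mu) \;\leq\; 2\,\bbE\sup_{g \in \calG} \frac{1}{n}\sum_{i=1}^n \epsilon_i g(X_i),$$
where $\epsilon_1,\dots,\epsilon_n$ are i.i.d.~Rademacher signs independent of $X_1,\dots,X_n$. The key observation is that, conditional on the sample, the Rademacher process $g \mapsto \frac{1}{n}\sum_i \epsilon_i g(X_i)$ has sub-Gaussian increments of scale $\|g-g'\|_{L^2(\mu_n)}/\sqrt n$ by Hoeffding's lemma, so the natural pseudo-metric for chaining is precisely the empirical $L^2(\mu_n)$ metric that appears on the right-hand side of the claim.

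Next I would run a chaining argument truncated at scale $\tau$. Fix $\tau > 0$ and, for each dyadic scale $2^{-k}\tau$ with $k \geq 0$, pick a minimal $(2^{-k}\tau)$-net of $\calG$ in $L^2(\mu_n)$. For any $g \in \calG$, telescope through successive approximants $g_k$ at these scales. The increments $g_{k+1} - g_k$ take at most $N(2^{-k-1}\tau, \calG, L^2(\mu_n)) \cdot N(2^{-k}\tau,\calG, L^2(\mu_n))$ possible values and have empirical $L^2$ norm at most $3 \cdot 2^{-k}\tau$; a union bound over sub-Gaussian tails and summation over $k$ yields the Dudley integral $\frac{1}{\sqrt n}\int_0^\tau \sqrt{\log N(\epsilon, \calG, L^2(\mu_n))}\,d\epsilon$ for the chained portion. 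The residual from truncating at the coarsest scale $\tau$ — namely $g - g_0$ — is controlled in expectation by its empirical $L^2$ norm, which is at most $\tau$, giving the additive first term. Combining gives
$$\bbE_\epsilon \sup_{g \in \calG} \frac{1}{n}\sum_{i=1}^n \epsilon_i g(X_i) \;\lesssim\; \tau + \frac{1}{\sqrt n}\int_\tau^\infty \sqrt{\log N(\epsilon, \calG, L^2(\mu_n))}\,d\epsilon,$$
where the upper limit may be taken as $\infty$ because the integrand vanishes beyond the empirical diameter of $\calG$.

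Finally, since the bound holds pointwise in the sample for every $\tau > 0$, one may take the infimum over $\tau$ inside the outer expectation, and then combine with the symmetrization step to conclude. The main obstacle is simply clean bookkeeping: verifying that the truncation residual is absorbed into the single additive $\tau$ without incurring an extra factor growing in $n$. This works precisely because the residual is measured in the empirical $L^2(\mu_n)$ norm rather than in supremum norm — the same reason $L^2(\mu_n)$, rather than $L^\infty$, is the appropriate metric for this chaining bound.
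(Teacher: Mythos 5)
Your overall route --- symmetrization, then chaining of the conditional Rademacher process in the $L^2(\mu_n)$ pseudo-metric at scale $1/\sqrt n$, truncation at a level $\tau$, and finally taking the infimum over $\tau$ inside the outer expectation because the conditional bound holds for every $\tau$ simultaneously --- is exactly the standard argument behind the cited result (the paper itself only invokes Lemma 16 of von Luxburg and Bousquet, so supplying this proof is the right move). However, the chaining step as written is inverted and does not yield the stated inequality. You place nets at the scales $2^{-k}\tau$, $k\ge 0$, i.e.\ at scales \emph{finer} than $\tau$, and you correctly note that summing those increments gives $\tfrac{1}{\sqrt n}\int_0^\tau\sqrt{\log N(\epsilon,\calG,L^2(\mu_n))}\,d\epsilon$ --- but that is the wrong portion of the Dudley integral: the lemma needs $\int_\tau^\infty$, and for the classes this lemma is actually applied to (convex Lipschitz or H\"older classes with $\log N(\epsilon)\asymp\epsilon^{-d/2}$, $d\ge 5$) the integral over $(0,\tau)$ diverges, which is the whole reason for truncating. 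Worse, in the decomposition $g=g_0+(g-g_0)$ with $g_0$ the scale-$\tau$ approximant, your downward chain only re-controls $g-g_0$, which you have already bounded by $\tau$ via $\|g-g_0\|_{L^2(\mu_n)}\le\tau$; the term that actually needs the entropy integral, namely $\sup_{g}\frac1n\sum_i\epsilon_i g_0(X_i)$ taken over the $\tau$-net, is never bounded. So the final display is asserted but does not follow from the steps preceding it.

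The fix is to reverse the truncation. Bound the fine part deterministically, conditionally on the data, by Cauchy--Schwarz: $|\frac1n\sum_i\epsilon_i(g-g_0)(X_i)|\le\|g-g_0\|_{L^1(\mu_n)}\le\|g-g_0\|_{L^2(\mu_n)}\le\tau$, which is the additive $\tau$; then chain the net element $g_0$ \emph{upward} through scales $2^k\tau$ from $\tau$ up to the empirical diameter, starting the chain from a single fixed function (whose conditional Rademacher mean is zero), so that the sub-Gaussian maximal inequalities over these coarser nets sum to $\tfrac{1}{\sqrt n}\int_\tau^\infty\sqrt{\log N(\epsilon,\calG,L^2(\mu_n))}\,d\epsilon$, the integrand vanishing beyond the diameter. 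With that correction your remaining ingredients --- symmetrization, sub-Gaussian increments of scale $\|g-g'\|_{L^2(\mu_n)}/\sqrt n$, and moving $\inf_{\tau>0}$ inside the expectation --- are sound and give the lemma.
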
 
Tight bounds on the metric entropy of the class $\calF_{L,U}(K)$ are well known, and were first obtained in general 
dimension $d$ by~\cite{bronshtein1976} (see also~\cite{dudley2014}).
The following is a version of Bronshtein's result stated in Theorem 1 of \cite{guntuboyina2012b} with explicit
dependence on the constants $L$ and $U$, which we
shall also use in Section~\ref{sec:upper_bounds_unbounded}.  
\begin{lemma}[\cite{bronshtein1976}]
\label{lem:bronshtein}
There exist universal constants $C,\epsilon_0 > 0$ such that
for every $L,U > 0$ and $b > a$, we have for all $\epsilon \leq \epsilon_0(U+L(b-a))$,
$$\log N \big(\epsilon, \calF_{L,U}([a,b]^d), L^\infty\big) \leq C \left(\frac{U + L(b-a)}{\epsilon}\right)^{\frac d 2}.$$
\end{lemma}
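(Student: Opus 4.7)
The plan is to reduce the statement to the canonical class $\calF_{1,1}([0,1]^d)$ by an affine rescaling and then invoke the classical Bronshtein bound on that class as a black box. The estimate on $\calF_{L,U}([a,b]^d)$ is just a change-of-variables rewriting of the unit-scale estimate.

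First, I would introduce the affine change of variables. Set $M = U + L(b-a)$ and, for each $f \in \calF_{L,U}([a,b]^d)$, define $g:[0,1]^d \to \bbR$ by $g(x) = f(a\mathbf{1} + (b-a)x)/M$. Since $f$ is convex and the inner map is affine, $g$ is convex; since $|f| \leq U \leq M$, we have $|g|\leq 1$; and the Lipschitz constant of $g$ is at most $L(b-a)/M \leq 1$. Thus $g \in \calF_{1,1}([0,1]^d)$. The map $f \mapsto g$ is a bijection between the two classes which rescales the $L^\infty$ metric by exactly $M$, hence
$$N\bigl(\epsilon, \calF_{L,U}([a,b]^d), L^\infty\bigr) = N\bigl(\epsilon/M, \calF_{1,1}([0,1]^d), L^\infty\bigr).$$

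Next, I would invoke the canonical form of Bronshtein's theorem: there exist universal constants $C', \delta_0 > 0$ such that
$$\log N\bigl(\delta, \calF_{1,1}([0,1]^d), L^\infty\bigr) \leq C' \delta^{-d/2}, \qquad \forall \, \delta \leq \delta_0.$$
This is the original result of Bronshtein (1976), and in the explicit form just stated it appears as Theorem~1 of Guntuboyina and Sen. With $\delta = \epsilon/M$ and $\epsilon_0 := \delta_0$, the hypothesis $\epsilon \leq \epsilon_0 M$ becomes $\delta \leq \delta_0$, so combining the two displays yields the claim with $C = C'$.

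The main obstacle is of course the canonical Bronshtein bound itself, which the paper treats as a known black box. Its standard proof constructs, at each scale $\delta$, an explicit $\delta$-net for the $L^\infty$ metric by approximating any element of $\calF_{1,1}([0,1]^d)$ by a piecewise-linear convex function whose epigraph is a polytope with a controlled number of vertices on a $\sqrt{\delta}$-grid; a counting argument on such polytopes delivers the $\delta^{-d/2}$ rate (and a matching lower bound shows that the exponent cannot be improved). Since this classical construction is reproduced in full in the references cited, the only work left in our setting is the elementary rescaling described above, which precisely tracks the joint dependence on $L$, $U$, and $b-a$ through the single scale parameter $M = U + L(b-a)$.
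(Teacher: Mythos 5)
Your proposal is correct and matches how the paper handles this lemma: the paper does not prove it either, but quotes it as Bronshtein's theorem in the explicit form of Theorem~1 of Guntuboyina--Sen, and your affine rescaling with the single scale $M=U+L(b-a)$ is exactly the standard reduction that produces that explicit dependence from the unit-scale statement. The only nitpick is that your map $f\mapsto g$ is an injection into, not a bijection onto, $\calF_{1,1}([0,1]^d)$, so the displayed equality of covering numbers should be the inequality $N(\epsilon,\calF_{L,U}([a,b]^d),L^\infty)\leq N(\epsilon/M,\calF_{1,1}([0,1]^d),L^\infty)$ --- which is all the upper bound needs.
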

Notice that, by condition~\ref{assm:S1},  
$$N(\cdot, \calF_{1,1}(\calX), L^2(\mu_n)) \leq  
  N(\cdot, \calF_{1,1}(\calX), L^\infty) \leq 
  N(\cdot, \calF_{1,1}([-1,1]^d), L^\infty),$$
  and the same upper bound holds for the covering number $N(\cdot, \calF_{1,1}(\calY), L^2(\nu_n))$. 
Combine these facts with equation~\eqref{eq:emp_proc_red} and with Lemmas~\ref{lem:chaining}--\ref{lem:bronshtein} to deduce that for any $\tau > 0$,
\begin{align}
\label{eq:chaining_integral_calc}
\bbE &\big| \calT_c(\mu_n,\nu_n)  - \calT_c(\mu,\nu) \big|
 \lesssim  \Lambda \left[n^{-1/2} + \tau + \frac 1 {\sqrt n}\int_\tau^\infty \epsilon^{-\frac d 4} d\epsilon\right]
 \lesssim  \Lambda \left[n^{-1/2} + \tau + \frac {\tau^{1 - \frac d 4}} {\sqrt n}  \right],
\end{align}
where we have used the assumption $d \geq 5$. Choosing $\tau \asymp n^{-2/d}$ leads to the claimed bound,
\begin{align*}
\nonumber
\bbE \big| \calT_c(\mu_n,\nu_n)  - \calT_c(\mu,\nu) \big|
\lesssim \Lambda n^{-2/d}.
\end{align*}

\subsubsection{Case 2: $1 < \alpha < 2$} 

We prove the claim using a smooth approximation of the cost~$h$,   thereby appealing
to the result of Case 1.
Let $K: \bbR^d \to \bbR_+$ be an even, smooth mollifier with support lying in $B_{0,1}$.
For $\sigma > 0$, write $K_\sigma(x) = \sigma^{-d} K(x/\sigma)$.
Define the modified cost function $c_\sigma(x,y) = h_\sigma(x-y)$, where
$h_\sigma = h \star K_\sigma.$
\begin{lemma}
\label{lem:cost_convolution}
Assume conditions~\ref{assm:S1}, \ref{assm:global}--\ref{assm:bded_smoothness}
hold for some~$\alpha\in(1,2)$. Then, 
there exist universal constants $C> 0$ and $\epsilon \in (0,1)$  such that for all $\sigma \in (0, \epsilon)$, the following statements hold.
\begin{enumerate} 
\item[(i)] We have,
$\|h - h_\sigma\|_{L^\infty(\calZ)} \leq \Lambda\sigma^\alpha.$
\item[(ii)] The cost function $c_\sigma$ itself satisfies condition \ref{assm:global},
and satisfies condition~\ref{assm:bded_smoothness} 
in the sense that there exists an open set $\tilde\calZ_1\supseteq \calZ$ contained in $B_{0,2}$ 
such that $h_\sigma \leq 1$ on $\tilde \calZ_1$ and
$$\norm{h_\sigma}_{\calC^2(\tilde\calZ_1)} \leq \Lambda_\sigma := C\Lambda  \sigma^{\alpha-2}.$$

\end{enumerate}
\end{lemma}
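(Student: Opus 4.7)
The plan is to handle (i) and (ii) by standard mollification estimates: the improved rate $\sigma^\alpha$ in (i) comes from the evenness of $K_\sigma$ via a Taylor-cancellation argument, while the $\sigma^{\alpha-2}$ factor in the $\calC^2$ bound of (ii) comes from moving a derivative onto $K_\sigma$ and exploiting the Hölder continuity of $\nabla h$.

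For (i), I would first fix $d_0 := \mathrm{dist}(\calZ, \partial \calZ_1) > 0$ (positive since $\calZ$ is compact and $\calZ_1$ is open), so that for any $\sigma \leq d_0$ and $x \in \calZ$ the ball $x + B_{0,\sigma}$ lies in $\calZ_1$, on which $\nabla h$ is defined and $(\alpha-1)$-Hölder with constant at most $\Lambda$ by \ref{assm:bded_smoothness}. The integral form of Taylor's theorem at $x$ then yields
\begin{equation*}
\big|h(x-y) - h(x) + \nabla h(x) \cdot y\big| \leq \Lambda \|y\|^\alpha, \qquad \|y\| \leq \sigma.
\end{equation*}
Since $K$ is even, $\int y K_\sigma(y)\, dy = 0$, so I may subtract the linear term inside the convolution to get $h_\sigma(x) - h(x) = \int \big[h(x-y) - h(x) + \nabla h(x) \cdot y\big] K_\sigma(y)\, dy$, whence $|h_\sigma(x) - h(x)| \leq \Lambda \int \|y\|^\alpha K_\sigma(y)\, dy \leq \Lambda \sigma^\alpha$ using that $K_\sigma$ is supported in $B_{0,\sigma}$.

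For the structural part of (ii), I would check \ref{assm:global} termwise: nonnegativity is immediate; convexity of $h_\sigma$ follows because $h_\sigma$ is a nonnegative average of translates of the convex function $h$; evenness follows from that of $h$ and $K$; and lower semi-continuity (in fact smoothness) is automatic for convolution with a smooth compactly supported kernel. For the domain I would define $\tilde\calZ_1 := \{x \in \bbR^d : x + B_{0,\sigma} \subseteq \calZ_1\}$, the Minkowski erosion of $\calZ_1$ by $B_{0,\sigma}$. This set is convex (erosion of a convex set by a symmetric convex set), open (by openness of $\calZ_1$ and compactness of $B_{0,\sigma}$), contained in $\calZ_1 \subseteq B_{0,2}$, and contains $\calZ$ whenever $\sigma < d_0$. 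The bound $h_\sigma \leq 1$ on $\tilde\calZ_1$ is immediate by monotonicity: for $x \in \tilde\calZ_1$ the integrand $h(x-y) K_\sigma(y)$ is supported where $h \leq 1$.

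The quantitative $\calC^2$ estimate is the key step. For $x \in \tilde\calZ_1$, I would move one derivative onto $K_\sigma$ to write $\partial_i \partial_j h_\sigma(x) = \int \partial_i h(x-y)\, \partial_j K_\sigma(y)\, dy$ and then use $\int \partial_j K_\sigma = 0$ to subtract $\partial_i h(x)$:
\begin{equation*}
\partial_i \partial_j h_\sigma(x) = \int \big[\partial_i h(x-y) - \partial_i h(x)\big]\, \partial_j K_\sigma(y)\, dy.
\end{equation*}
Combining the Hölder bound $|\partial_i h(x-y) - \partial_i h(x)| \leq \Lambda \|y\|^{\alpha-1}$ with the scaling identity $\int \|y\|^{\alpha-1} |\partial_j K_\sigma(y)|\, dy = \sigma^{\alpha-2} \int \|z\|^{\alpha-1} |\partial_j K(z)|\, dz$ then yields $|\partial_i \partial_j h_\sigma(x)| \lesssim \Lambda \sigma^{\alpha-2}$ with a constant depending only on $K$ and $d$. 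The zeroth- and first-order pieces of the $\calC^2$ norm are at most $\Lambda$ (from $\|h\|_{L^\infty(\calZ_1)} \leq 1$ and $\|\nabla h\|_{L^\infty(\calZ_1)} \leq \Lambda$), and since $\sigma < 1$ and $\alpha < 2$ they are absorbed into $C\Lambda \sigma^{\alpha-2}$. The only real obstacle I foresee is the set-theoretic bookkeeping that keeps all convolutions and derivatives inside $\calZ_1$; once $\tilde\calZ_1$ is chosen as above, the estimates themselves are routine mollification.
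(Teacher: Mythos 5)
Your proposal is correct and takes essentially the same approach as the paper: part (i) is the identical Taylor-expansion-plus-even-kernel cancellation, and part (ii) differs only in minor technical choices --- you move a single derivative onto $K_\sigma$ and combine the H\"older continuity of $\nabla h$ with $\int \partial_j K_\sigma = 0$, whereas the paper moves both derivatives onto the kernel and bounds the first-order Taylor remainder of $h$, and your $\sigma$-dependent erosion of $\calZ_1$ plays the role of the paper's fixed intermediate open set $\tilde\calZ_1$. Both routes give the same $C\Lambda\sigma^{\alpha-2}$ estimate and the same verification of \ref{assm:global} and of $h_\sigma \leq 1$.
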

The proof of Lemma~\ref{lem:cost_convolution} appears in Appendix~\ref{app:proof_lem_cost_conv}. Notice that
$$\sup_{\substack{\tilde\mu \in\calP(\calX) \\ \tilde\nu\in\calP(\calY)}}
		|\calT_c(\tilde\mu,\tilde\nu) - \calT_{c_\sigma}(\tilde\mu,\tilde\nu)| \leq \norm{h-h_\sigma}_{L^\infty(\calZ)},$$
thus Lemma~\ref{lem:cost_convolution}(i) implies
\begin{align*}
\bbE \big| &\calT_c(\mu_n,\nu_n) - \calT_c(\mu,\nu)\big| \\
 &\leq \bbE \big| \calT_c(\mu_n,\nu_n) - \calT_{c_\sigma} (\mu_n,\nu_n)\big| + \bbE \big| \calT_{c_\sigma}(\mu_n,\nu_n) - \calT_{c_\sigma}(\mu,\nu)\big|
  + \bbE \big| \calT_{c_\sigma}(\mu,\nu) - \calT_c(\mu,\nu)\big| \\
 &\leq 2 \Lambda\sigma^\alpha + \bbE \big| \calT_{c_\sigma}(\mu_n,\nu_n) - \calT_{c_\sigma}(\mu,\nu)\big|.
\end{align*}
On the other hand, by Lemma~\ref{lem:cost_convolution}(ii), 
we may apply the result of Case 1 to obtain,
$$\bbE \big| \calT_{c_\sigma}(\mu_n,\nu_n) - \calT_{c_\sigma}(\mu,\nu)\big| 
\lesssim   \Lambda  \sigma^{\alpha-2} n^{-2/d}.$$
Altogether, we deduce that for any $\sigma \in (0,\epsilon)$,
\begin{align*}
\bbE \big| \calT_{c}(\mu_n,\nu_n) - \calT_{c}(\mu,\nu)\big| 
 \lesssim \Lambda \left[\sigma^\alpha +  \sigma^{\alpha-2} n^{-\frac 2 d}\right].
\end{align*}
Choosing $\sigma \asymp n^{-1/d}$ leads 
to an upper bound scaling at the rate $\Lambda n^{-\alpha/d}$ on the right-hand side of the above display, as claimed.
 
\subsubsection{Case 3: $0 < \alpha \leq 1$}
When $\alpha \in (0,1]$, the claim follows by a simpler argument than 
that of Case 1. 
For any bounded set $K \subseteq \bbR^d$ and $L  > 0$, define
the $\alpha$-H\"older ball 
$\calC^\alpha(K; L) = \{f \in \calC^\alpha(K): \norm f_{\calC^\alpha(K)} \leq L\}$, and let  $\varphi_n$ and $\psi_n$ be defined as in Case 1. 
When $\alpha < 2$, Lemma~\ref{lem:semi_concave} no longer guarantees that these potentials are semi-concave, however 
the following  H\"older estimate  is easily derived, and stated without proof. 
\begin{lemma} 
Assume conditions~\ref{assm:S1} and~\ref{assm:global}, and that
condition~\ref{assm:bded_smoothness} holds with $\alpha \in (0,1]$. 
Then, $\varphi_n\in \calC^\alpha(\calX; \Lambda)$ and $\psi_n \in \calC^\alpha(\calY;\Lambda)$ for all $n \geq 1$. 
\end{lemma}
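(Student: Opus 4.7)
The plan is to read off the Hölder regularity of $\varphi_n$ directly from its $c$-conjugate representation, invoking the elementary principle that an infimum of a uniformly $\alpha$-Hölder family of functions inherits the same modulus of continuity.

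By Lemma~\ref{lem:kantorovich_background}(ii) I may take $\varphi_n$ to be $c$-concave with $|\varphi_n| \leq 1$, and set $\psi_n = \varphi_n^c$, which is likewise $c$-concave with $|\psi_n| \leq 1$. Since $\varphi_n$ is $c$-concave, Lemma~\ref{lem:kantorovich_background}(i) gives $\varphi_n = \varphi_n^{cc} = \psi_n^c$, yielding the key representation
\begin{equation*}
\varphi_n(x) = \inf_{y \in \calY}\bigl\{h(x-y) - \psi_n(y)\bigr\},\qquad x \in \calX.
\end{equation*}

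Now fix $x_1, x_2 \in \calX$ and $\epsilon > 0$, and choose $y^\epsilon \in \calY$ so that $h(x_2 - y^\epsilon) - \psi_n(y^\epsilon) \leq \varphi_n(x_2) + \epsilon$. The inequality $\varphi_n(x_1) \leq h(x_1 - y^\epsilon) - \psi_n(y^\epsilon)$ then gives
\begin{equation*}
\varphi_n(x_1) - \varphi_n(x_2) \leq h(x_1 - y^\epsilon) - h(x_2 - y^\epsilon) + \epsilon \leq \Lambda\,\norm{x_1-x_2}^\alpha + \epsilon,
\end{equation*}
where the second inequality uses that both displacements $x_i - y^\epsilon$ lie in $\calZ \subseteq \calZ_1$ under condition~\ref{assm:S1}, together with the Hölder bound on $h$ from condition~\ref{assm:bded_smoothness}. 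Sending $\epsilon \to 0$ and swapping the roles of $x_1$ and $x_2$ produces the seminorm estimate $[\varphi_n]_\alpha \leq \Lambda$; combined with $\norm{\varphi_n}_\infty \leq 1 \leq \Lambda$ this yields $\varphi_n \in \calC^\alpha(\calX; 2\Lambda)$. The argument for $\psi_n$ is identical, working from $\psi_n(y) = \inf_{x \in \calX}\{h(x-y) - \varphi_n(x)\}$.

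The main obstacle is essentially nil---this is the textbook ``infimum of uniformly $\alpha$-Hölder functions is $\alpha$-Hölder'' principle, which is precisely why the author records the lemma without proof. The only minor cosmetic mismatch is between the constant $2\Lambda$ that the argument literally produces and the $\Lambda$ appearing in the statement; this is absorbed into the universal-constant conventions and has no impact on the subsequent Dudley-chaining step, where only the Hölder regularity class, and not the precise radius, matters.
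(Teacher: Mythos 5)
Your argument is correct and is exactly the routine computation the paper intends when it records this lemma without proof: represent $\varphi_n=\psi_n^c$ via $c$-conjugacy and use that an infimum of a uniformly $\alpha$-H\"older family (here $x\mapsto h(x-y)-\psi_n(y)$, $y\in\calY$, with displacements staying in $\calZ\subseteq\calZ_1$) retains the same H\"older seminorm. If you want the literal constant $\Lambda$ rather than $2\Lambda$, bookkeep slightly more tightly: the infimum argument bounds the seminorm by the seminorm of $h$ alone, while Lemma~\ref{lem:kantorovich_background}(ii) gives $|\varphi_n|\leq \norm{c}_\infty = \sup_{\calZ}h$, so $\norm{\varphi_n}_{\calC^\alpha(\calX)} \leq \sup_{\calZ_1}h + \sup_{z\neq z'}\frac{|h(z)-h(z')|}{\norm{z-z'}^\alpha} = \norm{h}_{\calC^\alpha(\calZ_1)}\leq \Lambda$ (and likewise for $\psi_n$), though this makes no difference to the chaining step that follows.
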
 
By an analogous reduction as in Case 1, we therefore have for any $\tau > 0$,
\begin{align*}
\bbE \big| \calT_c(\mu_n,\nu_n) - \calT_c(\mu,\nu)\big|
 &\lesssim \Lambda \left[n^{-1/2} + \tau + \frac 1 {\sqrt n} \int_\tau^\infty \sqrt{\log N(\epsilon, \calC^\alpha([-1,1]^d;1), L^\infty)} d\epsilon\right].
\end{align*}
By Theorem 2.7.1 of~\cite{vandervaart1996}, one has
$$\log N(\epsilon, \calC^\alpha([-1,1]^d;1), L^\infty) \lesssim \epsilon^{-d/\alpha}, \quad \epsilon > 0,$$
implying that the right-hand side of the penultimate display is of order $\Lambda n^{-\alpha/d}$ if $\tau \asymp n^{-\alpha/d}$ 
and $d \geq 5 > 2\alpha$.   The claim follows.
\qed

\begin{remark}
\label{rem:low_dim}
Though Theorem~\ref{thm:main_unbounded} is only stated when $d \geq  5$, 
a simple extension of our proof yields the rate 
$ n^{-1/2}$ whenever $d \leq 4$
and $\alpha = 2$, up to a logarithmic
factor when $d = 4$; this follows by taking $\tau = 0$ in equation~\eqref{eq:chaining_integral_calc}. 
A similar extension can be made when $\alpha \in (0,1]$. On the other hand, 
our mollification step for the case $\alpha \in (1,2)$
does not appear to yield a sharp convergence rate when $d < 2\alpha < 5$. 
After a preprint of our paper was made available, 
the work of~\cite{hundrieser2022} has extended Theorem~\ref{thm:ub_general},
by showing that in all dimensions $d \geq 1$, 
and for all $\alpha \in (0,2]$, the upper bound
\begin{align}
\bbE \big| \calT_c(\mu_n,\nu_n) - \calT_c(\mu,\nu)\big| 
\lesssim \begin{cases}
n^{-\alpha/d}, & 2\alpha < d \\ 
n^{-\alpha/d}\log n, & 2\alpha=d \\ 
n^{-1/2}, & 2\alpha > d
\end{cases}
\end{align}
holds
under the same assumptions as those of Theorem~\ref{thm:ub_general}.
In particular, this result recovers our Theorem~\ref{thm:ub_general} 
when $d \geq 5$. 
\end{remark}

\section{Upper Bounds for Unbounded Measures under Tail Conditions} 
\label{sec:upper_bounds_unbounded} 
We now turn to upper bounding the rate of convergence of the empirical
optimal transport cost for measures $\mu,\nu \in \calP(\bbR^d)$ with unbounded support, 
under suitable tail  conditions. We shall assume that 
the cost function $c$ satisfies the following smoothness assumption, which is a suitable generalization
of condition~\ref{assm:bded_smoothness} to the present setting. 
\begin{enumerate}[leftmargin=1.65cm,listparindent=-\leftmargin,label=(\textbf{H\arabic*}),start=2]  
\item \label{assm:unbded_smoothness}
$h \in \calC^2_{\mathrm{loc}}(\bbR^d)$, and there exist $p,\Lambda \geq 1$ such that $\norm{h}_{\calC^2(B_{0,r})} \leq \Lambda r^{p}$
for all $r \geq 1$. 
\end{enumerate}
Notice that unlike in Section~\ref{sec:upper_bounds_compact}, we limit our exposition
to costs lying in $\calC^2_{\mathrm{loc}}$ rather than $\calC^\alpha_{\mathrm{loc}}$ for
all $ \alpha \in (0,2]$. As we shall see, our main result is nevertheless sufficiently general
to cover the costs $h(x) = \norm x^p$ for all $p > 1$, via an approximation argument.

Our upper bounds in  Section~\ref{sec:upper_bounds_compact}
hinged upon Lemma~\ref{lem:semi_concave}, which provided
quantitative estimates on the  Lipschitz and semi-concavity constants 
of  optimal Kantorovich potentials, for any sufficiently smooth cost function over
a compact set. 
In contrast, we now only assume a local
H\"older estimate on $c$ in assumption~\ref{assm:unbded_smoothness}, 
thus the optimal Kantorovich potentials between two measures
on $\bbR^d$ will generally not be {\it globally} Lipschitz or semi-concave. 
While these properties nevertheless hold {\it locally} under rather general conditions
\citep{gangbo1996}, we are not aware of existing quantitative 
estimates  under the conditions required for our development. 
One of our key technical contributions in this section 
is to obtain such quantitative bounds, which we now describe before stating our main result.
We begin with the following straightforward generalization of Lemma~\ref{lem:semi_concave},
whose proof appears in Appendix~\ref{app:semi_concave_general}.
\begin{lemma} 
\label{lem:semi_concave_general}
Let $c$ be a cost function satisfying conditions~\ref{assm:global}
and~\ref{assm:unbded_smoothness} with $h$ superlinear. Given two measures 
$\mu,\nu\in \calP(\bbR^d)$, 
let $\pi \in \Pi(\mu,\nu)$ be an optimal coupling with respect to $c$, 
and assume there exists a locally bounded $c$-concave function $\varphi:\bbR^d \to \bbR$ such that
$\supp(\pi) \subseteq \partial^c\varphi$. 
Let $r \geq 1$, 
and let 
$$\Lambda_r  = \Lambda\sup\big\{\|x-y\|^p: x \in B_{0,r}, \ y \in \partial^c \varphi(B_{0,r})\big\}.$$
Then, there exist universal constants $c_1,c_2 > 0$ such that the map 
$$\phi:B_{0,r} \to \bbR, \quad \phi(x) = \varphi(x) - c_1\Lambda_r \norm x^2,$$
is concave, and Lipschitz with parameter $c_2r\Lambda_r$.
\end{lemma}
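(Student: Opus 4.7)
The plan is to realize $\varphi$ on $B_{0,r}$ as a pointwise infimum of explicit $c$-supporting functions, each of which is individually semi-concave and Lipschitz on $B_{0,r}$ with constants controlled by $\Lambda_r$, and then to transfer these bounds to $\varphi$ via the standard fact that the infimum of a family of semi-concave (resp.\ Lipschitz) functions with a common modulus (resp.\ constant) is itself semi-concave (resp.\ Lipschitz) with that same modulus (resp.\ constant). This mirrors the strategy behind Lemma~\ref{lem:semi_concave}, but must use assumption~\ref{assm:unbded_smoothness} to produce Hessian bounds on a region that scales with the coupling rather than on a fixed compact set.

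First, I would invoke Lemma~\ref{lem:kantorovich_background}(iv): since $h$ is superlinear and $\varphi$ is locally bounded, $\partial^c\varphi(x_0) \neq \emptyset$ for every $x_0 \in B_{0,r}$. Selecting some $y_0(x_0) \in \partial^c\varphi(x_0)$ and defining
$$\psi_{x_0}(x) := h(x - y_0(x_0)) - \varphi^c(y_0(x_0)),$$
the defining inequality of $\partial^c\varphi$ gives $\psi_{x_0} \geq \varphi$ on all of $\bbR^d$, with equality at $x=x_0$. Taking $x_0 = x$ saturates this at every point of $B_{0,r}$, yielding the identity $\varphi(x) = \inf_{x_0 \in B_{0,r}} \psi_{x_0}(x)$ for $x \in B_{0,r}$. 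Next, for any $x, x_0 \in B_{0,r}$, the definition of $\Lambda_r$ gives $\|x - y_0(x_0)\|^p \leq \Lambda_r/\Lambda$, so $x - y_0(x_0) \in B_{0,R}$ with $R := (\Lambda_r/\Lambda)^{1/p} \vee 1$. Assumption~\ref{assm:unbded_smoothness} then implies $\|h\|_{\calC^2(B_{0,R})} \leq \Lambda R^p \lesssim \Lambda_r$, where the constants are absorbed into the universal constants (permitted to depend on $\Lambda, p, d$). In particular $\|\nabla h(x - y_0(x_0))\| \lesssim \Lambda_r$ and $D^2 h(x - y_0(x_0)) \preceq C\Lambda_r I$ uniformly over $x \in B_{0,r}$, so each $\psi_{x_0} - \tfrac{C}{2}\Lambda_r\|\cdot\|^2$ is concave on $B_{0,r}$ and each $\psi_{x_0}$ is $\lesssim \Lambda_r$-Lipschitz there.

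Passing these uniform bounds through the infimum from Step~1 yields that $\varphi - c_1 \Lambda_r \|\cdot\|^2$ is concave on $B_{0,r}$ and that $\varphi$ is $\lesssim \Lambda_r$-Lipschitz there. Since $r \geq 1$, the added quadratic $-c_1 \Lambda_r \|x\|^2$ has gradient of norm at most $2c_1 r \Lambda_r$ on $B_{0,r}$, so $\phi$ inherits the concavity claim and its Lipschitz constant increases only to order $r \Lambda_r$, giving the stated bound. The main subtle point is the identity $\varphi = \inf_{x_0 \in B_{0,r}} \psi_{x_0}$ on $B_{0,r}$: restricting the infimum to $x_0 \in B_{0,r}$---rather than using the full $c$-concave representation of $\varphi$---is exactly what forces only displacements $\|x-y\|$ with $y \in \partial^c\varphi(B_{0,r})$ to enter the Hessian estimate, allowing $\Lambda_r$ (which is defined precisely through such displacements) to appear in the final constants instead of a coarser global quantity that would in general be infinite.
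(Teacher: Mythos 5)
Your proposal is correct and takes essentially the same route as the paper: nonemptiness of $\partial^c\varphi(x)$ and the infimum representation of $\varphi$ over $y\in\partial^c\varphi(B_{0,r})$ via Lemma~\ref{lem:kantorovich_background}(iv), followed by the \ref{assm:unbded_smoothness} gradient/Hessian bound on the ball of radius determined by the displacements, with concavity and Lipschitzness passed through the infimum (the paper's near-minimizer computation is just the proof of your ``inf of uniformly Lipschitz functions'' step). The only blemish is your absorption of $\Lambda R^p=\Lambda_r\vee\Lambda$ into ``$\lesssim\Lambda_r$ with constants depending on $\Lambda$'': this fails when the displacement supremum is below $1$ unless one replaces $\Lambda_r$ by $\Lambda_r\vee\Lambda$ (harmless, since the conclusion is monotone in $\Lambda_r$, and the paper's proof glosses over the same $R<1$ edge case), and letting $c_1,c_2$ depend on $\Lambda$ would be undesirable for the downstream use in Theorem~\ref{thm:main_unbounded}, where the $\Lambda$-dependence must be tracked explicitly.
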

Lemma~\ref{lem:semi_concave_general} shows that
the local Lipschitz and semi-concavity
constants for a Kantorovich potential $\varphi$ are largely driven by the maximal displacement induced by the coupling
$\pi$ over points lying in $B_{0,r}$. 
We will show how $L^\infty$ estimates on these displacements can be obtained 
under the following conditions on $\mu,\nu$.

\begin{enumerate} 
\item[(i)] \textbf{\bf Super-Gaussian Anticoncentration.} 
We will say a measure $\mu$ is $(\gamma,b)$-super-Gaussian for some $\gamma,b > 0$ if 
for any $x \in \bbR^d$,  
$$ \mu(B_{x}) \geq b \cdot \bbP(Z \in B_{x}),\quad \text{where } Z \sim N(0,\gamma^2). $$
\item[(ii)] \textbf{Sub-Weibull Concentration.} A measure $\nu$ is said to be $(\sigma,\beta)$-sub-Weibull \citep{kuchibhotla2020a,vladimirova2020}
for some $\sigma > 0$ and $0 < \beta \leq 2$~if
$$\int \exp\left[\frac 1 {2} \left(\frac{\norm y}{\sigma}\right)^\beta\right] d\nu(y) \leq 2.$$
\end{enumerate}
The assumption of super-Gaussianity 
implies an {\it anticoncentration} bound for the underlying measure, in the sense
that it cannot place significantly less probability mass than a Gaussian distribution in any  
unit-radius ball. In Appendix~\ref{app:super_gaussian}, we show that
a measure is super-Gaussian whenever it admits a regular Lebesgue density in the sense of~\cite{polyanskiy2016}.
For instance, Polyanskiy and Wu show that for any probability measure $\mu$ with finite first moment, 
the mixture distribution $K_\tau \star \mu$  admits a regular density, where $K_\tau$ is the $N(0, \tau^2 I_d)$
density for some $\tau > 0$. Any such measure is thus also super-Gaussian. 
We also note that absolute continuity is not necessary for super-Gaussianity; for example, 
given any (possibly atomic) measure $\rho \in \calP(\bbR^d)$, any super-Gaussian
measure $\mu$, and any $\lambda \in (0,1)$, the measure $\lambda \rho + (1-\lambda) \mu$ is also super-Gaussian. 

On the other hand, the sub-Weibull condition is a {\it concentration} assumption which   
generalizes the well-known sub-Gaussian and sub-exponential conditions,
which respectively correspond to the cases
$\beta=2$ and $\beta=1$ up to rescaling of the constant $\sigma$
\citep{boucheron2013}. 
Indeed, it is a straightforward consequence of Markov's inequality
that if $Y$ has $(\sigma,\beta)$-sub-Weibull distribution
for some $\beta \in (0,2]$ and $\sigma > 0$,
then for all $u > 0$,
\begin{equation}
\label{eq:sub_weibull_concentration}
\bbP(\norm Y \geq u) \leq 2 \exp\left\{-\frac 1 {2} \left(\frac{u}{\sigma}\right)^\beta\right\}.
\end{equation}
Finally, we shall require the following condition on the cost function $c$. 
\begin{enumerate}[leftmargin=1.65cm,listparindent=-\leftmargin,label=(\textbf{H\arabic*}),start=3]  
\item \label{assm:unbded_growth}
We have $h(0)=0$. Furthermore, there exist constants $p > 1$, $\kappa \geq 1$, and a convex  differentiable function $\omega:(1,\infty) \to (1,\infty)$ such that 
$$h(z) = \omega(\norm z),\quad \text{and}\quad
\frac 1 \kappa \norm z^{p-1} \leq\omega'(\norm z) \leq \kappa \norm z^{ p  - 1}\quad \text{for all } \norm z > 1.$$ 
\end{enumerate}
Condition~\ref{assm:unbded_growth} implies that the function $h$ is superlinear, with order of growth 
comparable to that of $\norm \cdot^p$ for some $p > 1$.
Aside from the assumption $h(0)=0$, which can always be satisfied by translation,  
we emphasize that condition~\ref{assm:unbded_growth} does not constrain the behaviour of~$h$ near zero, but is nevertheless
stronger than the conditions assumed in
Section~\ref{sec:upper_bounds_compact}. 
Therefore, we provide several examples of cost functions satisfying these two conditions
before turning to our main results.

The most important example of a cost satisfying our assumptions is, of course, $\norm \cdot^p$ for $p \geq 2$.
However, when $p \in (1, 2)$, the cost $\norm \cdot^p$ does not satisfy condition~\ref{assm:unbded_smoothness}; to study this case, we 
will employ an approximation argument with the cost function $h_\epsilon(x) = (\norm x^2 + \epsilon^{2/p})^{p/2}-\epsilon$, 
which satisfies \ref{assm:unbded_smoothness}--\ref{assm:unbded_growth} for any $\epsilon > 0$ and $p > 1$.  
This cost function has been of interest in its own right in the optimal transport literature, 
as it forms an approximation of $\norm\cdot^p$
which satisfies the celebrated Ma-Trudinger-Wang regularity conditions even when $p \neq 2$~\citep{ma2005,li2014}. 
 
	Conditions~\ref{assm:unbded_smoothness}--\ref{assm:unbded_growth} also hold for costs that have different power-type behaviors at the origin and infinity, such as $h(z) = \lambda_p \|z\|^p + \lambda_q \|z\|^q$ for $p > q \geq 2$, which arise in the study of modified transport problems with congestion costs~\citep{BraCarSan10,CarJimSan08}.
	
	More generally, conditions~\ref{assm:unbded_smoothness}--\ref{assm:unbded_growth} are 
satisfied by any 
twice continuously differentiable
convex cost function of the form
$$h(x) \propto \begin{cases}
\|x\|^p, & \|x\| > 1 \\
h_0(x) ,  & \|x\| \leq 1,
\end{cases}$$
where $h_0(0)=0$ and $p \geq 2$. This family includes, for instance, smooth approximations
of the truncated cost $h(x) = \|x\|^p I(x \in B_{0}^c)$, and $\ell_p$ analogues of Huber's loss function.
   
While the smoothness condition~\ref{assm:unbded_smoothness} will be needed 
in order to appeal to Lemma~\ref{lem:semi_concave_general},
assumption~\ref{assm:unbded_growth} is sufficient to obtain the following result, which plays a central role in our~development.
\begin{theorem}
\label{thm:coupling_quantitative}
Let $\mu,\nu \in \calP(\bbR^d)$ and assume $\nu$ is $(\sigma,\beta)$-sub-Weibull. 
Let $c$ be a cost function satisfying conditions~\ref{assm:global} and~\ref{assm:unbded_growth}, 
and let $\pi \in \Pi(\mu,\nu)$ have $c$-cyclically monotone support.
Then, there exists a constant $C > 0$ depending on $d, p, \beta, \kappa$  such that for 
any $c$-concave function $\varphi$ satisfying $\supp(\pi) \subseteq \partial^c\varphi$,
\begin{equation} 
\label{eq:coupling_quantitative_part1}
\sup_{y \in \partial^c\varphi(x)}\norm{y} \leq C \sigma \left\{(\norm x+1) \vee \sup_{\norm{x-y}\leq 2} \left[\log\left(\frac 1 {\mu(B_y)}\right)\right]^{\frac 1 {\beta}}\right\}, \quad x \in \bbR^d.
\end{equation}
In particular, if 
$\mu$ is $(\gamma,b)$-super-Gaussian, $h$ is strictly convex, and $\calT_c(\mu, \nu) < \infty$,
then the unique optimal transport map $T$
pushing $\mu$ forward onto $\nu$  satisfies for  $\mu$-a.e. $x \in \bbR^d$,
\begin{equation}
\label{eq:coupling_quantitative_part2}
\norm{T(x)} \leq C' \sigma  (\norm x+1)^{\frac {2} {\beta}},
\end{equation}
for a constant $C' > 0$ depending on $d,\kappa,p,\beta, \gamma,b$.
\end{theorem}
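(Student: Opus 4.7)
My plan is a perturbation argument driven by $c$-cyclical monotonicity, coupled with the sub-Weibull tail of $\nu$. Fix $(x,y) \in \partial^c\varphi$ with $R := \|y\|$. If $R \leq C_0(\|x\|+1)$ for a sufficiently large $C_0 = C_0(p,\kappa)$, the first alternative in the maximum already yields the claim, so assume throughout that $R$ is much larger than $\|x\|$. Define the shifted point $x^* := x + 2y/R \in \partial B(x,2)$. I will show that the unit ball $B_{x^*}$ has small $\mu$-mass, and this produces the second alternative in the bound.

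Concretely, I claim that for $\pi$-a.e.\ pair $(x',y') \in (B_{x^*} \cap \supp(\mu)) \times \bbR^d$ one has $\|y'\| \geq cR$ for a constant $c = c(p,\kappa) > 0$. Since $\supp(\pi) \subseteq \partial^c\varphi$ and $\partial^c\varphi$ is $c$-cyclically monotone (Lemma~\ref{lem:kantorovich_background}(iii)), the two-point version gives
\begin{equation*}
h(x-y) - h(x'-y) \leq h(x-y') - h(x'-y').
\end{equation*}
For the left-hand side, a direct expansion of $\|x'-y\|^2 - \|x-y\|^2$ using $\|x'-x\| \leq 3$ and $R \gg \|x\|$ shows $\|x'-y\| \leq \|x-y\| - 1$ uniformly in $x' \in B_{x^*}$ (the shift $2y/R$ contributes a drift of order $-2R$ to the inner product $\langle x-y, x'-x\rangle$). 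By condition~\ref{assm:unbded_growth}, the left-hand side is therefore at least $\omega'(\|x'-y\|) \gtrsim R^{p-1}/\kappa$. On the right-hand side, the mean value theorem and the upper bound $\omega'(s) \leq \kappa s^{p-1}$ give $|h(x-y') - h(x'-y')| \lesssim (\|x\|+\|y'\|+1)^{p-1}$. Comparing the two and solving in $\|y'\|$ yields $\|y'\| \geq cR$. Sub-Weibull concentration~\eqref{eq:sub_weibull_concentration} now gives
\begin{equation*}
\mu(B_{x^*}) \leq \nu(\{\|y'\| \geq cR\}) \leq 2\exp\bigl(-(cR/\sigma)^\beta/2\bigr),
\end{equation*}
and rearranging yields $R \lesssim \sigma[\log(1/\mu(B_{x^*}))]^{1/\beta}$, which is dominated by the supremum in the theorem since $\|x^* - x\| = 2$.

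For the second statement, strict convexity of $h$, absolute continuity of $\mu$ (which follows from the existence of a $(\gamma_1,\gamma_2)$-regular density), and finiteness of $\calT_c(\mu,\nu)$ imply the existence and $\mu$-a.e.\ uniqueness of an optimal transport map $T$, satisfying $T(x) \in \partial^c\varphi(x)$ for $\mu$-a.e.\ $x$ by Lemma~\ref{lem:kantorovich_background}(iv). Applying the first part, it suffices to bound the sup. Lemma~\ref{lem:c1c2_reg} yields $\mu(B_{y'}) \gtrsim \exp(-(1+\gamma_1)(\|y'\|+1)^2)$, so for $y'$ with $\|x-y'\| \leq 2$ one has $[\log(1/\mu(B_{y'}))]^{1/\beta} \lesssim (\|x\|+1)^{2/\beta}$. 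Since $\beta \leq 2$, this term dominates $\|x\|+1$, and the claimed bound on $\|T(x)\|$ follows.

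The main technical obstacle is making the perturbation argument quantitatively uniform over $x' \in B_{x^*}$: although the shift $2y/R$ is chosen to guarantee a geometric gain of order $1$ in the distance $\|x'-y\|$, one must carefully expand the inner products involved and use the bounds on $\omega'$ from~\ref{assm:unbded_growth} to propagate this gain through $h$ with explicit control of the $p$- and $\kappa$-dependence of the constants.
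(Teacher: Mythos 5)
Your proposal is correct and follows essentially the same route as the paper's proof: shift the source point by distance $2$ towards $y$, apply two-point $c$-cyclical monotonicity against pairs $(x',y')\in\supp(\pi)$ with $x'$ in the shifted unit ball, use the bounds on $\omega'$ from \ref{assm:unbded_growth} to force $\|y'\|\gtrsim \|y\|$, compare $\mu(B_{x^*})$ with the sub-Weibull tail of $\nu$, and deduce the second claim from Lemma~\ref{lem:c1c2_reg} and the Gangbo--McCann existence/uniqueness result exactly as the paper does. The only slip is that with your shift direction $y/\|y\|$ (the paper shifts along $(y-x)/\|x-y\|$, which yields the clean bound $\|x'-y\|\le \|x-y\|-1$) the claimed uniform gain of $1$ over $B_{x^*}$ is not quite attained, but for $R\ge C_0(\|x\|+1)$ with $C_0$ large the gain is still bounded below by a positive constant, which is all the downstream argument needs and only changes the constants.
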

We defer the proof to Section~\ref{sec:proof_coupling_quantitative}.
Theorem~\ref{thm:coupling_quantitative} implies that 
any optimal transport plan between $\mu$ and $\nu$ does not move
probability mass from any point $x \in \bbR^d$ by more than a polynomial of $\norm x$. 
To obtain this result, we required an anticoncentration assumption on the source
measure $\mu$ and a concentration assumption on the target measure $\nu$, ensuring
that their tails are sufficiently comparable
to avoid large transports of mass. It is easy 
to see that assumptions of this nature are necessary: for
instance, if $\mu$ were compactly supported and $\nu$ were
supported over $\bbR^d$, any transport plan in $\Pi(\mu,\nu)$
would couple a nonzero amount of mass from the bounded support of $\mu$ 
with points lying at an arbitrarily far distance.

In the special case where $\nu$ is sub-Gaussian, 
its tails are no heavier than those of a super-Gaussian measure~$\mu$. 
Equation~\eqref{eq:coupling_quantitative_part2}
 shows that the optimal transport map from $\mu$ to $\nu$
grows at most linearly in this regime, irrespective of the order of growth $p$
of the cost function.  
This bound is clearly unimprovable in general, as can be seen
by taking $\mu=\nu$. 

Our proof of Theorem~\ref{thm:coupling_quantitative} is inspired
by its non-quantitative analogues proven
by~\cite{gangbo1996}, and 
by~\cite{colombo2021} who derived analogous quantitative bounds
for the special case where $\mu$ is a Gaussian measure and $h(x) = \norm x^2$. 
Unlike \cite{colombo2021}, our result holds for any
cost function satisfying conditions~\ref{assm:global} and~\ref{assm:unbded_growth}, 
and for general measures $\mu,\nu$ which
are not presumed to be absolutely continuous with respect to the Lebesgue measure. 
We shall require this level of generality in the sequel, when Theorem~\ref{thm:coupling_quantitative}
will be invoked for $\mu$ and $\nu$ replaced by their empirical counterparts.

Equipped with Theorem~\ref{thm:coupling_quantitative},
we are ready to state the main result of this section.
\begin{theorem}
\label{thm:main_unbounded}
Assume conditions \ref{assm:global},~\ref{assm:unbded_smoothness} and~\ref{assm:unbded_growth} hold.
Assume further that $\mu,\nu\in \calP(\bbR^d)$
are both $(\sigma,\beta)$-sub-Weibull, and  $(\gamma,b)$-super-Gaussian. 
Then, there exists a constant $C > 0$ depending on $\sigma,\beta,\gamma,b,\kappa,p,d$ such that
$$\bbE \big| \calT_c(\mu_n, \nu_n) - \calT_c(\mu,\nu)\big| \leq C \Lambda n^{-\frac 2 d}.$$
\end{theorem}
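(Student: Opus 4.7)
The plan is to follow Case~1 of the proof of Theorem~\ref{thm:ub_general}, reducing the problem to suprema of empirical processes indexed by Kantorovich potentials, but with the global semi-concavity estimate of Lemma~\ref{lem:semi_concave} replaced by its local counterpart in Lemma~\ref{lem:semi_concave_general}, with Theorem~\ref{thm:coupling_quantitative} supplying control of the Lipschitz radius, and with sub-Weibull tail bounds handling the noncompactness of the support.

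The starting point is Kantorovich duality: let $(\varphi_n,\psi_n)$ and $(\varphi_0,\psi_0)$ be optimal $c$-concave potentials for the empirical and population problems, respectively. Under~\ref{assm:unbded_growth} the cost grows polynomially, so $c\in L^1(\mu\otimes\nu)$ and $c\in L^1(\mu_n\otimes\nu_n)$ almost surely, and both pairs of potentials are admissible in both dual problems. The same two-sided calculation as in~\eqref{eq:pf_main_emp_proc_red} gives
\begin{equation*}
\big|\calT_c(\mu_n,\nu_n)-\calT_c(\mu,\nu)\big| \leq \sum_{\varphi\in\{\varphi_n,\varphi_0\}}\Big|\int \varphi\, d(\mu_n-\mu)\Big| + \sum_{\psi\in\{\psi_n,\psi_0\}}\Big|\int\psi\,d(\nu_n-\nu)\Big|.
\end{equation*}
The two population terms are handled by Chebyshev's inequality using $L^2$ bounds on $\varphi_0,\psi_0$ implied by the polynomial growth of $c$ and the sub-Weibull moments of $\mu,\nu$, yielding an $O(n^{-1/2})$ contribution.

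The heart of the argument is the analysis of the two empirical-potential terms. Choose a localization radius $R_n$ of order $(\log n)^{1/\beta}$ so that, by the concentration estimate~\eqref{eq:sub_weibull_concentration} and a union bound, all samples lie in $B_{0,R_n}$ on an event $\calE_n$ of overwhelming probability. On $\calE_n^c$ the contribution to each integral is bounded via Cauchy--Schwarz against the polynomial envelope $|\varphi_n(x)|\lesssim 1+\norm{x}^p$ inherited from $c$-concavity, and is negligible. On $\calE_n$, Theorem~\ref{thm:coupling_quantitative} applied to the empirical coupling bounds $\sup_{y\in\partial^c\varphi_n(x)}\norm{y}$ in terms of $\sigma(\norm{x}+1)$ plus a term involving $\log(1/\mu_n(B_y))^{1/\beta}$. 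The latter is controlled by combining the anticoncentration estimate of Lemma~\ref{lem:c1c2_reg} with a uniform concentration bound (via an $\epsilon$-net over $B_{0,R_n}$ and a Bernstein-type inequality), yielding $\mu_n(B_y)\gtrsim n^{-1}e^{-C\norm{y}^2}$ uniformly on a high-probability event. This produces a polynomial-in-$R_n$ bound on the maximal displacement, so Lemma~\ref{lem:semi_concave_general} gives a splitting $\varphi_n(x)=\tilde\varphi_n(x)+c_1\Lambda_{R_n}\norm{x}^2$ on $B_{0,R_n}$ with $\tilde\varphi_n$ concave and Lipschitz with constant polynomial in $R_n$, and analogously for $\psi_n$.

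The remaining task mirrors Case~1 of Theorem~\ref{thm:ub_general}: bound $\int \tilde\varphi_n\,d(\mu_n-\mu)$ using Dudley's chaining (Lemma~\ref{lem:chaining}) together with Bronshtein's entropy bound (Lemma~\ref{lem:bronshtein}) for convex Lipschitz functions on $B_{0,R_n}$, optimize the chaining radius at $\tau\asymp n^{-2/d}$, and dispose of the quadratic correction by Chebyshev. All factors of $R_n$ and $\Lambda_{R_n}$ are polylogarithmic in $n$ and can be absorbed into the constant $C$ from the theorem statement. The main obstacle is the concentration step used to transfer Theorem~\ref{thm:coupling_quantitative} from population to empirical measures---ensuring that the uniform lower bound on $\mu_n(B_y)$ holds with the correct exponential-in-$\norm{y}^2$ profile over the growing domain $B_{0,R_n}$ and with failure probability small enough to be absorbed into the final rate. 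The other ingredients are routine adaptations of the compact-support argument.
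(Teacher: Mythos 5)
Your overall architecture matches the paper's (duality reduction, extension of the empirical potentials, local semi-concavity via Lemma~\ref{lem:semi_concave_general} fed by Theorem~\ref{thm:coupling_quantitative}, then chaining with Bronshtein's bound), but the step you yourself flag as the main obstacle is a genuine gap, and it cannot be closed in the form you propose. The claimed uniform lower bound $\mu_n(B_y)\gtrsim n^{-1}e^{-C\norm y^2}$ over the whole localization ball $B_{0,R_n}$ with $R_n\asymp(\log n)^{1/\beta}$ is false: for $\norm y^2$ larger than a constant multiple of $\log n$ (which occurs well inside $B_{0,R_n}$ whenever $\beta<2$, and even for $\beta=2$ depending on $\sigma,\gamma_1$) one has $n\mu(B_y)\ll 1$, so with non-negligible probability the ball $B_y$ contains no sample at all and $\mu_n(B_y)=0$; no net/Bernstein argument can produce a positive uniform lower bound there. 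This is exactly why the paper splits the analysis into two regimes: the anticoncentration event $A_n$ of Lemma~\ref{lem:anticonc} is restricted to the radius $\ell_{J_n}\asymp\sqrt{\log n/(d(1+\gamma_1))}$, where the VC bound keeps the empirical mass comparable to the population mass, while for larger radii the $c$-superdifferential is bounded by the completely different, cruder route of Proposition~\ref{prop:phi_bound_superdiff}, which uses only the $L^\infty$ bound $\widebar R_n$ on the potential and no anticoncentration whatsoever. Your single-regime argument breaks precisely where Theorem~\ref{thm:coupling_quantitative} needs the empirical anticoncentration input.

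A second, independent problem is the final chaining step. Applying Lemma~\ref{lem:bronshtein} on the single ball $B_{0,R_n}$ with Lipschitz and uniform bounds that are polynomial in $R_n$ gives an entropy of order $(\mathrm{polylog}(n)/\epsilon)^{d/2}$, and after optimizing $\tau$ this yields $n^{-2/d}$ \emph{times a polylogarithmic factor}; such a factor depends on $n$ and cannot be ``absorbed into the constant $C$'' of the theorem, which asserts a clean $C\Lambda n^{-2/d}$ bound. The paper removes these factors through the annular decomposition of Step~0 together with the weighted entropy bound of Lemma~\ref{lem:covering_peeling} and Proposition~\ref{prop:covering_unbounded}: the sub-Weibull decay $\mu_n(I_j)\lesssim V_n\exp(-c3^{j\beta})$ offsets the polynomially growing moduli $M_j,U_j$ on the outer annuli, so the $L^2(\mu_n)$ entropy of the whole class $\calK_{M,U}$ is $\lesssim V_n^{2\rho}\epsilon^{-d/2}$ with no domain-size or logarithmic inflation. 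To obtain the stated rate you would need to adopt this peeling structure (or an equivalent device); as written, your argument proves at best a rate degraded by powers of $\log n$, and only after the anticoncentration gap above is repaired. The remaining ingredients (two-sided dual reduction, Chebyshev for the population potentials, Cauchy--Schwarz on the complement event against a moment of the random envelope $\widebar R_n$, and the extension of the empirical potentials as in Lemma~\ref{lem:extension_potentials}) are indeed routine adaptations, as you suggest.
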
 	
Theorem~\ref{thm:main_unbounded} shows that, for $\calC^2_{\mathrm{loc}}$ convex costs
with polynomial rate of growth, 
the $n^{-2/d}$ rate of convergence obtained for compactly
supported measures in Section~\ref{sec:upper_bounds_compact} carries over to unboundedly supported measures,
with tails satisfying suitable concentration and anticoncentration 
conditions. While Theorem~\ref{thm:main_unbounded} does not provide upper bounds for $\calC^\alpha_{\mathrm{loc}}$ costs when $\alpha < 2$, it is sufficiently general   to deduce
the following special case.
\begin{corollary}
 \label{cor:norm_p_unbounded}
Assume $\mu,\nu \in \calP(\bbR^d)$ satisfy the same conditions as Theorem~\ref{thm:main_unbounded}.
Then, for all $p > 1$, 
$$\bbE \big| W_p^p(\mu_n, \nu_n) - W_p^p(\mu,\nu)\big| \lesssim 
\begin{cases}
n^{-p/d}, & 1 < p \leq 2\\
n^{-2/d}, & 2 \leq p < \infty.
 \end{cases}$$
\end{corollary}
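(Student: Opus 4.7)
The proof naturally splits according to whether $p \geq 2$ or $1 < p < 2$. When $p \geq 2$, the cost $h(x) = \|x\|^p$ is $\calC^2$ on all of $\bbR^d$ and directly satisfies conditions \ref{assm:global}, \ref{assm:unbded_smoothness} (with exponent $p$ and a constant $\Lambda$ independent of $\epsilon$), and \ref{assm:unbded_growth} (with exponent $p$ and a universal $\kappa$), so Theorem~\ref{thm:main_unbounded} applies verbatim to yield the rate $n^{-2/d}$.

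For $1 < p < 2$ the cost fails to be $\calC^2$ at the origin, and the plan is to approximate it by the smooth regularization $h_\epsilon(x) = (\|x\|^2 + \epsilon^{2/p})^{p/2} - \epsilon$ already flagged by the authors, parametrized by $\epsilon \in (0,1]$. I would first verify that $h_\epsilon$ obeys \ref{assm:global}, \ref{assm:unbded_smoothness}, and \ref{assm:unbded_growth} with parameters that are uniform in $\epsilon$ apart from the $\calC^2$ constant. Convexity, evenness, nonnegativity and $h_\epsilon(0) = 0$ are immediate; writing $h_\epsilon(z) = \omega_\epsilon(\|z\|)$, the derivative $\omega_\epsilon'(r) = pr(r^2 + \epsilon^{2/p})^{p/2-1}$ satisfies $\omega_\epsilon'(r) \asymp r^{p-1}$ for $r > 1$ with a fixed $\kappa$, giving \ref{assm:unbded_growth}. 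The key quantitative step is computing the Hessian: its operator norm is controlled by $p((p-1)r^2 + \epsilon^{2/p})(r^2 + \epsilon^{2/p})^{p/2-2}$, which attains its maximum near the origin at order $\epsilon^{(p-2)/p}$ and remains controlled by $\|x\|^{p-2}$ for large $\|x\|$. Combined with polynomial bounds on $h_\epsilon$ and $\nabla h_\epsilon$, this yields $\|h_\epsilon\|_{\calC^2(B_{0,r})} \leq C\epsilon^{(p-2)/p} r^p$ for all $r \geq 1$, so \ref{assm:unbded_smoothness} holds with $\Lambda_\epsilon = C\epsilon^{(p-2)/p}$. Next, the elementary inequalities $\|x\|^p \leq (\|x\|^2 + \epsilon^{2/p})^{p/2} \leq \|x\|^p + \epsilon$, the upper bound following from subadditivity of $t \mapsto t^{p/2}$ when $p/2 \leq 1$, yield $0 \leq h(x) - h_\epsilon(x) \leq \epsilon$ pointwise, and hence $|\calT_c(\rho,\tau) - \calT_{c_\epsilon}(\rho,\tau)| \leq \epsilon$ for every pair of probability measures $\rho,\tau$.

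Combining these ingredients via the triangle inequality with Theorem~\ref{thm:main_unbounded} applied to $c_\epsilon$ gives
\[
\bbE\bigl|\calT_c(\mu_n,\nu_n) - \calT_c(\mu,\nu)\bigr| \leq 2\epsilon + C\, \epsilon^{(p-2)/p}\, n^{-2/d},
\]
and balancing by choosing $\epsilon \asymp n^{-p/d}$ produces the claimed $n^{-p/d}$ rate. The main technical obstacle is the bookkeeping in the verification step: I must check that the constant $C$ produced by Theorem~\ref{thm:main_unbounded} depends only on $p$, $\kappa$, and the fixed moment and regularity parameters of $\mu$ and $\nu$, and in particular does not absorb any hidden $\epsilon$-dependence through, e.g., the constant $\kappa$ in \ref{assm:unbded_growth}. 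The calculation above checks that $\omega_\epsilon'(r) \asymp r^{p-1}$ holds uniformly for $\epsilon \in (0,1]$ and $r > 1$, which is exactly what is required to pin all the $\epsilon$-dependence into $\Lambda_\epsilon$ and make the final optimization deliver the sharper rate $n^{-p/d}$ for $p \in (1,2)$.
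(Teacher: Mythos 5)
Your proposal is correct and follows essentially the same route as the paper's proof in Appendix~\ref{app:pf_norm_p_unbounded}: a direct application of Theorem~\ref{thm:main_unbounded} for $p\geq 2$, and for $1<p<2$ the same regularization $h_{p,\epsilon}(x)=(\|x\|^2+\epsilon^{2/p})^{p/2}-\epsilon$, verified to satisfy \ref{assm:unbded_growth} with $\kappa$ uniform in $\epsilon$ and \ref{assm:unbded_smoothness} with $\Lambda_\epsilon \asymp \epsilon^{1-2/p}$, combined with the $O(\epsilon)$ uniform approximation and the choice $\epsilon \asymp n^{-p/d}$. Your check that the only $\epsilon$-dependence enters through $\Lambda_\epsilon$ is exactly the point the paper's Lemma~\ref{lem:approximation_unbded} is designed to isolate.
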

For the regime $p \geq 2$, this result follows as a direct consequence
of Theorem~\ref{thm:main_unbounded}, while for the regime $1 < p < 2$, we achieve
the claim by using a smooth uniform approximation of $\norm\cdot^p$ which satisfies the conditions of
Theorem~\ref{thm:main_unbounded}. The proof is deferred to Appendix~\ref{pf:root_n_unbounded}.

By reasoning identically as in Corollary~\ref{cor:wasserstein}, one can also
deduce a convergence rate for  empirical Wasserstein distances between measures with unbounded support. In particular, equation~\eqref{eq:cor_wasserstein}
continues to hold for all $\mu,\nu \in \calP(\bbR^d)$ satisfying the conditions of Theorem~\ref{thm:main_unbounded}, 
and satisfying
$W_p(\mu,\nu) \geq \delta_0>0$.

\subsection{Proof of Theorem~\ref{thm:main_unbounded}}
 
As in the proof of
Theorem~\ref{thm:ub_general}, we shall reduce the problem of bounding the 
$L^1(\bbP)$ convergence rate of $\calT_c(\mu_n,\nu_n)$ 
to that of bounding the supremum of an empirical process. Unlike in Theorem~\ref{thm:ub_general},
the relevant empirical process in this section will be indexed by {\it locally} 
semi-concave Lipschitz functions, with quantitative local Lipschitz and semi-concavity moduli 
obtained in part by appealing to Theorem~\ref{thm:coupling_quantitative}.
Our proof proceeds with eight steps, the first five of which carry out this reduction, and the last 
three of which bound the resulting
empirical process. 
Throughout the proof, $C,C', C_i, c_i > 0$, $i\geq 0$,  
denote constants possibly depending on $\sigma,\beta,\gamma,b,\kappa,p,d$, 
which do not depend on $\Lambda$ or otherwise on $c$, $\mu$ and $\nu$, and 
whose value may change from line to line. 
Likewise, the symbols $\lesssim$ and $\asymp$ hide constants possibly
depending on the former quantities. 
All intermediary results appearing in the sequel are proven in 
Appendix~\ref{app:proofs_unbounded}. 

{\bf Step 0: Setup.} 
Let $L_j = [- 3^{j},   3^{j}]^d$
for all $ j\geq 0$. For all $j \geq 1$, let $I_{j1}, \dots, I_{jm_d}$ denote the  
 $m_d:=3^d - 1$ 
cubes of side-length $2 \cdot 3^{j-1}$ forming the natural partition of $L_j \setminus L_{j-1}$.
For notational convenience, set $I_0 \equiv I_{0k} = L_0$ for all $k=1,\dots,m_d$, and define
$$I_j := L_j \setminus L_{j-1} = \bigcup_{k=1}^{m_d} I_{jk},\quad j\geq 1.$$
Note that $\{I_j : j \geq 0\}$ and $\{I_{jk}:j\geq 0, 1 \leq k \leq m_d\}$ are partitions of $\bbR^d$, up to measure-zero
intersections. We also write $\ell_j = \sup_{x \in I_j} \norm x = \sqrt d 3^j$ for all $j \geq 0$.

Let $\calF$ denote the set of convex functions over $\bbR^d$.
Recall that $\calF_{m,u}(I)$ denotes the set of $m$-Lipschitz convex functions over
a convex set $I \subseteq \bbR^d$, which are uniformly bounded over $I$ by $u > 0$.
 Let $M = (M_{j}: j \geq 0)$  and $U = (U_{j}: j \geq 0)$
 denote  sequences of nonnegative real numbers, and let 
\begin{equation}
\label{eq:defn_calKM}
\calK_{M,U} = \Big\{f:\bbR^d \to \bbR :  (-f)|_{I_{jk}} \in \calF_{M_{j},U_{j}}(I_{jk}), \ j\geq 0 , 1 \leq k \leq m_d\Big\}.
\end{equation}

{\bf Step 1: Extension of Kantorovich Potentials.}
Let
$$R_n = \max_{1 \leq i \leq n} \norm{X_i}\vee \norm{Y_i}.$$
Conditions~\ref{assm:global} and~\ref{assm:unbded_growth} imply that $h(z) \leq \kappa \norm z^p$
for all $z \in \bbR^d$, thus $h$ is bounded above by $\widebar R_n := \kappa (2R_n)^p$  over $B_{0,2R_n}$. 
It can then be deduced from Lemma~\ref{lem:kantorovich_background}(ii) that
there exist potentials 
$f_n : \supp(\mu_n) \to [-\widebar R_n, 0]$ and $g_n: \supp(\nu_n) \to [0,\widebar R_n]$
such that  $(f_n,g_n) \in \Phi_c(\mu_n,\nu_n)$ and $(f_n, g_n)$ is 
optimal for the optimal transport problem from $\mu_n$ to $\nu_n$.
We extend the domain of $f_n$ and $g_n$ to $\bbR^d$ using the following
construction. Define for all $y \in \bbR^d$,
$$\eta_n(y) = \inf_{x \in \supp(\mu_n)} \Big\{c(x,y) - f_n(x)\Big\} \wedge \widebar R_n,$$
and for all $x,y \in \bbR^d$,
\begin{align*}
\varphi_n(x) = \eta_n^{c}(x)= \inf_{y \in \bbR^d} \Big\{ c(x,y) - \eta_n(y)\Big\},\quad 
\psi_n(y) = \eta_n^{cc}(y) = \inf_{x \in \bbR^d} \Big\{ c(x,y) - \eta_n^c(x)\Big\}.
\end{align*}
\begin{lemma}
\label{lem:extension_potentials}
Given an optimal coupling $\pi_n$ between $\mu_n$ and $\nu_n$, the following hold.
\begin{enumerate}
\item[(i)] For all $x,y \in \bbR^d$, $\varphi_n(x) + \psi_n(y)\leq c(x,y)$. 
\item[(ii)] $\varphi_n(x) = f_n(x)$ for all $x \in \supp(\mu_n)$, and
$\psi_n(y) = g_n(y)$ for all $y \in \supp(\nu_n)$.
In particular,
$$\calT_c(\mu_n,\nu_n) = \int \varphi_n d\mu_n + \int \psi_n d\nu_n.$$
\item[(iii)] For all $x \in \bbR^d$,  $|\varphi_n(x)| \vee |\psi_n(x)| \leq \widebar R_n$.
\item[(iv)] For all $(x,y) \in \supp(\pi_n)$, $(x,y) \in \partial^c\varphi_n$ and $(y,x) \in \partial^c\psi_n$.
\end{enumerate}   
\end{lemma}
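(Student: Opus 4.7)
The plan is to verify each of the four claims directly from the definitions of $\eta_n$, $\varphi_n = \eta_n^c$, and $\psi_n = \eta_n^{cc}$, together with the elementary properties of $c$-conjugation collected in Lemma~\ref{lem:kantorovich_background}. Parts (i) and (iii) are essentially mechanical, part (ii) is the core of the argument, and part (iv) is immediate from (i) and (ii).

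Part (i) is the defining property of $c$-conjugates: since $\psi_n = (\eta_n^c)^c = \varphi_n^c$, one has $\psi_n(y) \leq c(x,y) - \varphi_n(x)$ for every $x,y \in \bbR^d$. For part (iii), the bounds $f_n \leq 0$ and $c \geq 0$ ensure that every term in the infimum defining $\eta_n$ is nonnegative, so $0 \leq \eta_n \leq \widebar R_n$. Plugging $y=x$ into the infimum defining $\varphi_n(x)$ and using $h(0)=0$ from~\ref{assm:unbded_growth} gives $\varphi_n(x) \leq -\eta_n(x) \leq 0$, while $c \geq 0$ combined with $\eta_n \leq \widebar R_n$ yields $\varphi_n(x) \geq -\widebar R_n$. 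A symmetric argument controls $\psi_n$.

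For part (ii), I would establish separately the two one-sided inequalities $\varphi_n \geq f_n$ on $\supp(\mu_n)$ and $\psi_n \geq g_n$ on $\supp(\nu_n)$, and then use the variational characterization of $\calT_c(\mu_n,\nu_n)$ to promote them to equalities. The first follows by plugging an arbitrary $x_0 \in \supp(\mu_n)$ into the infimum defining $\eta_n$, obtaining $\eta_n(y) \leq c(x_0,y) - f_n(x_0)$ for every $y$, hence $c(x_0,y) - \eta_n(y) \geq f_n(x_0)$; taking the infimum over $y$ gives $\varphi_n(x_0) \geq f_n(x_0)$. For the second, $c$-admissibility of $(f_n,g_n)$ on the supports gives $g_n(y) \leq \inf_{x \in \supp(\mu_n)}\{c(x,y) - f_n(x)\}$ on $\supp(\nu_n)$, and since $g_n \leq \widebar R_n$ by construction, $g_n \leq \eta_n$ on $\supp(\nu_n)$; combining with the elementary fact $\eta_n \leq \eta_n^{cc} = \psi_n$ yields the claim. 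By part~(i), $(\varphi_n, \psi_n) \in \Phi_c(\mu_n,\nu_n)$, so
\begin{equation*}
\int \varphi_n\, d\mu_n + \int \psi_n\, d\nu_n \leq \calT_c(\mu_n,\nu_n) = \int f_n\, d\mu_n + \int g_n\, d\nu_n,
\end{equation*}
which together with the two one-sided inequalities forces equality of the integrals; since $\mu_n$ and $\nu_n$ are purely atomic, $\varphi_n = f_n$ on $\supp(\mu_n)$ and $\psi_n = g_n$ on $\supp(\nu_n)$ pointwise, and summing gives the integral identity.

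Finally, part (iv) is obtained by combining part (ii) with the complementary slackness identity $f_n(x) + g_n(y) = c(x,y)$ for $(x,y) \in \supp(\pi_n)$, which is standard for any optimal coupling and Kantorovich pair (see Lemma~\ref{lem:kantorovich_background}(iv)). This yields $\varphi_n(x) + \psi_n(y) = c(x,y)$ on $\supp(\pi_n)$; unwinding the definition of the $c$-superdifferential, this equality combined with (i) is precisely the statement that $x$ attains the infimum in $\psi_n(y) = \inf_v\{c(v,y) - \varphi_n(v)\}$, so $(x,y) \in \partial^c\varphi_n$. Using evenness of $h$ under~\ref{assm:global} (so $c$ is symmetric) and the identity $\psi_n^c = \varphi_n$ (from $\eta_n^{ccc} = \eta_n^c$), the same equality gives $(y,x) \in \partial^c\psi_n$. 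The only delicate point throughout is bookkeeping the cap $\widebar R_n$, but this is harmless because $g_n \leq \widebar R_n$ by the choice of $(f_n,g_n)$ from Lemma~\ref{lem:kantorovich_background}(ii).
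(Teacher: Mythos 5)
Your proposal is correct and follows essentially the same route as the paper: (i) and (iii) by direct use of the conjugate definitions and the sign/cap bounds on $\eta_n$, (ii) by the two one-sided inequalities $\varphi_n \geq f_n$, $\psi_n \geq \eta_n \geq g_n$ on the supports promoted to equality via weak duality and atomicity, and (iv) by complementary slackness. The only cosmetic differences are that you verify the superdifferential membership in (iv) directly from the definition (rather than citing Lemma~\ref{lem:kantorovich_background}(iv)) and phrase the bound $\psi_n \geq \eta_n$ as the abstract fact $\eta_n \leq \eta_n^{cc}$; both are harmless.
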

By Lemma~\ref{lem:extension_potentials}(iii), $\varphi_n$ and $\psi_n$ are bounded, so $\varphi_n \in L^1(\mu)$ and $\psi_n \in L^1(\nu)$. 
This fact combined with Lemma~\ref{lem:extension_potentials}(i)
guarantees that $(\varphi_n,\psi_n) \in \Phi_c(\mu,\nu)$, whence 
 \begin{align}
 \label{eq:reduction_unbded} 
\nonumber \calT(\mu,\nu)
 &= \sup_{(\varphi,\psi)\in \Phi_c(\mu,\nu)} \int \varphi d\mu + \int \psi d\nu \\
 &\geq \int \varphi_n d\mu + \int \psi_n d\nu
 = \calT_c(\mu_n, \nu_n) + \int \varphi_n d(\mu-\mu_n) + \int \psi_n d(\nu-\nu_n).
\end{align}
It remains to bound the last two terms on the right-hand side of the above display.
We shall do so by first proving that $\varphi_n,\psi_n \in \calK_{M,U}$
with high probability, for suitable sequences $M$ and $U$. 
We focus on $\varphi_n$, and a symmetric
argument can be used for $\psi_n$. 

By Lemma~\ref{lem:semi_concave_general}, recall that
the Lipschitz and semi-concavity moduli of $\varphi_n|_{I_{jk}}$
are largely driven by the magnitude of the $c$-superdifferential
$\partial^c \varphi_n(I_{jk})$. The bulk of our effort will go into bounding this quantity.
In fact, it will suffice to bound that of $\partial^c\varphi_n(L_j)$, 
for all $j \geq 0$.  
To do so, we proceed with the following step,
in view of invoking Theorem~\ref{thm:coupling_quantitative}
with the measures $\mu_n$ and $\nu_n$.
 
\textbf{Step 2: Global Concentration and Local Anticoncentration of $\mu_n, \nu_n$.} 
Fix $\rho = \frac {2p}{\beta} \vee \frac d 4$, and set
$$V_{1,n} =  \int \exp\left(\frac{\norm{x}^\beta}{2\rho \sigma^\beta}\right) d\mu_n(x),\quad 
  V_{2,n}   = \int \exp\left(\frac{\norm{y}^\beta}{2\rho\sigma^\beta}\right)d\nu_n(y).$$
By Jensen's inequality, notice that  
\begin{equation}
\label{eq:sub_weibull_red}
\int \exp\left(\frac{\norm{x}^\beta}{2 \rho V_{1,n}\sigma^\beta}\right)d\mu_n(x)
\leq V_{1,n}^{1/V_{1,n}} \leq 2,
\end{equation}
implying that $\mu_n$ is $(\sigma (\rho V_{1,n})^{1/\beta} , \beta)$-sub-Weibull. Similarly, 
$\nu_n$ is $(\sigma (\rho V_{2,n})^{1/\beta}, \beta)$-sub-Weibull, implying that both are $(\sigma (\rho V_n)^{1/\beta}, \beta)$-sub-Weibull
when $V_n = V_{1,n} + V_{2,n}$. 

We further show that $\mu_n$  satisfies a high-probability anticoncentration bound in a sufficiently small
region about the origin. Specifically, define the integer
\begin{equation}
\label{eq:Jn}
J_n = \left\lfloor \frac 1 2 \log_3\left(\frac { \gamma^2} {4d} \log n\right)  \right\rfloor,
\end{equation}
and the event 
$$A_n = \bigcap_{j=0}^{J_n} \left\{
\inf_{x \in L_{j}}\inf_{\norm{x-y}\leq 2} \mu_n(B_y) \geq \frac{C_1}{2} { \exp(- \ell_j^2/\gamma^2)}\right\},$$ 
where we recall that the parameter $\gamma$ arises from the super-Gaussianity assumption on~$\mu$ and~$\nu$.
 The following result is proven using elementary tools from empirical process theory.
\begin{lemma}
\label{lem:anticonc}
There exists a sufficiently large choice of the constant $C_1 > 0$, depending only on $\gamma$,
such that  $\bbP(A_n^c) \lesssim 1/n$.
\end{lemma}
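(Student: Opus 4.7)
The plan is a covering argument combined with a multiplicative Chernoff inequality and a union bound over the scales $j\leq J_n$. First I would use Lemma~\ref{lem:c1c2_reg} to obtain a pointwise density lower bound; integrating over $B_{y,r}$ for $r\in(0,1]$ yields
\begin{equation*}
\mu(B_{y,r})\;\geq\; c_1 f(0)\,\omega_d\,r^d\exp\bigl(-(1+\gamma_1)(\|y\|+r)^2\bigr),
\end{equation*}
where $\omega_d$ is the volume of the unit ball in $\bbR^d$ and $c_1=\exp(-\gamma_2^2)$ from Lemma~\ref{lem:c1c2_reg}. For any $y$ in the $2$-neighborhood of $L_j$ one has $\|y\|\leq \ell_j+2$, so $(\|y\|+r)^2-\ell_j^2=O(\ell_j)$, and for $j\leq J_n$ this cross term is $O(\sqrt{\log n})$. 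After absorbing this polylogarithmic loss, one obtains a lower bound of the form $C\exp(-(1+\gamma_1)\ell_j^2)$ with $C$ controlled only by $\gamma_2$ (together with the universal parameters $d,\gamma_1,f(0)$).

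Second, I would fix a $\tfrac12$-net $\{y_{j,i}\}_{i=1}^{N_j}$ of $L_j+B_{0,2}$, with $N_j\lesssim \ell_j^d\vee 1$. For any $y$ in the neighborhood, pick $y_{j,i}$ with $\|y-y_{j,i}\|\leq\tfrac12$; then $B_{y_{j,i},1/2}\subseteq B_y$ and hence $\mu_n(B_y)\geq \mu_n(B_{y_{j,i},1/2})$. A multiplicative Chernoff bound applied to the Bernoulli count $n\mu_n(B_{y_{j,i},1/2})$ then gives
\begin{equation*}
\bbP\Bigl(\mu_n(B_{y_{j,i},1/2})<\tfrac{1}{2}\mu(B_{y_{j,i},1/2})\Bigr)\;\leq\; 2\exp\bigl(-c\,n\,\mu(B_{y_{j,i},1/2})\bigr).
\end{equation*}

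Third, a union bound over all $j=0,\dots,J_n$ and all net points yields
\begin{equation*}
\bbP(A_n^c)\;\lesssim\;\sum_{j=0}^{J_n}\ell_j^d\exp\!\Bigl(-c\,n\exp\bigl(-(1+\gamma_1)(\ell_j+O(1))^2\bigr)\Bigr).
\end{equation*}
The choice of $J_n$ ensures $(1+\gamma_1)\ell_{J_n}^2\leq (\log n)/4$, so $n\exp(-(1+\gamma_1)(\ell_j+O(1))^2)\gtrsim n^{3/4-o(1)}$ uniformly in $j\leq J_n$, and every summand is at most $\exp(-cn^{3/4-o(1)})$. Since $J_n=O(\log\log n)$ and $\ell_j^d\leq(\log n)^{O(1)}$, the full sum is $\exp(-cn^{3/4-o(1)})$, which is far smaller than $1/n$.

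The main obstacle is not conceptual---this is a standard covering-plus-Chernoff union bound---but rather careful bookkeeping. One must verify that the $O(\ell_j)$ cross terms in the exponent of the density lower bound contribute at most polylogarithmic factors, that this loss is comfortably absorbed by the generous factor $1/(4d(1+\gamma_1))$ built into the definition of $J_n$, and that the resulting constant $C_1$ depends only on $\gamma_2$ in the sense claimed. A secondary subtlety is choosing the net radius (here $\tfrac12$) and the Chernoff shrinkage factor (here $\tfrac12$) consistently, so that shrinking the ball from $B_y$ to $B_{y_{j,i},1/2}$ and taking half of $\mu$ do not together destroy the form $\exp(-(1+\gamma_1)\ell_j^2)$.
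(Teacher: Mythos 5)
Your overall scheme (a $\tfrac12$-net of each annulus plus a multiplicative Chernoff bound and a union bound over scales $j\leq J_n$) is a legitimate alternative to the paper's argument, which instead applies the Vapnik--Chervonenkis inequality to the class of \emph{all} balls to get a single uniform deviation bound $\sup_{B}|\mu_n(B)-\mu(B)|\leq u$ with $u$ set to half the smallest relevant mass; both routes give probability bounds far below $1/n$. However, there is a genuine gap in your first step, exactly at the anticoncentration estimate. You write that for $y$ in the $2$-neighborhood of $L_j$ the cross term $(\|y\|+r)^2-\ell_j^2=O(\ell_j)=O(\sqrt{\log n})$ is a ``polylogarithmic loss'' that can be absorbed, yielding $\mu(B_{y,r})\geq C\exp(-(1+\gamma_1)\ell_j^2)$ with $C$ a fixed constant. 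This does not follow: an additive $O(\sqrt{\log n})$ in the exponent is a multiplicative factor $\exp(-c\sqrt{\log n})$, which tends to $0$ with $n$ (and is not polylogarithmic either), so it cannot be absorbed into a constant $C_1$ depending only on the problem parameters. As written, your Chernoff step only certifies the weakened event $\mu_n(B_y)\gtrsim \exp(-(1+\gamma_1)(\ell_j+O(1))^2)$, whose threshold is smaller than the one in the definition of $A_n$ by an $n$-dependent factor; that is not the event the lemma (and Step 3, Regime 1 of the main proof) requires.

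The fix is not to absorb the cross term into the probability calculation but to use the slack between the exponent $\gamma_1\|x\|^2+\gamma_2\|x\|$ actually produced by $(\gamma_1,\gamma_2)$-regularity and the exponent $(1+\gamma_1)\ell_j^2$ appearing in $A_n$. Concretely, from $\|\nabla\log f\|\leq\gamma_1\|\cdot\|+\gamma_2$ one has $f(u)\geq f(0)\exp(-\gamma_1\|u\|^2-\gamma_2\|u\|)$, so for $x\in L_j$, $\|x-y\|\leq 2$ and $u\in B_{y,1/2}$ (hence $\|u\|\leq\ell_j+3$),
\begin{equation*}
\mu(B_{y,1/2})\;\geq\; c\, f(0)\exp\bigl(-\gamma_1(\ell_j+3)^2-\gamma_2(\ell_j+3)\bigr),
\end{equation*}
and the ratio of this to $\exp(-(1+\gamma_1)\ell_j^2)$ has exponent $\ell_j^2-(6\gamma_1+\gamma_2)\ell_j-(9\gamma_1+3\gamma_2)$, which is bounded below \emph{uniformly in $j$} because the spare quadratic term $\ell_j^2$ (coming from the gap between $\gamma_1$ and $1+\gamma_1$) dominates the linear cross terms. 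This yields $\mu(B_{y,1/2})\geq C_1\exp(-(1+\gamma_1)\ell_j^2)$ with a genuine constant $C_1$, after which your net, Chernoff bound, and union bound go through verbatim and in fact give a probability bound of order $\exp(-cn^{3/4-o(1)})$, much smaller than the $1/n$ required.
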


\textbf{Step 3: Bounding $\partial^c\varphi_n$.}
Step 2 will allow us to bound $\partial^c\varphi_n(L_j)$ whenever $0 \leq j \leq J_n$
by invoking Theorem~\ref{thm:coupling_quantitative}.
On the other hand, $\mu_n$ may place insufficient mass outside the box $L_{J_n}$ to appeal to Theorem~\ref{thm:coupling_quantitative}
when $j > J_n$, thus we treat this case separately below.
\begin{itemize}
\item \textbf{Regime 1: $0 \leq j \leq J_n$.} 
By Step 2, $\nu_n$ is $(\sigma (\rho V_n)^{1/\beta}, \beta)$-sub-Weibull, and 
$\supp(\pi_n) \subseteq \partial^c \varphi_n$ by Lemma~\ref{lem:extension_potentials}(iv).
Therefore, by Theorem~\ref{thm:coupling_quantitative}, we have for all $0 \leq j \leq J_n$, 
\begin{align*}
\sup_{\substack{y \in \partial^c\varphi_n(L_j)}}\norm{y} &\lesssim 
V_n^{\frac 1 \beta} \left\{(\ell_j+1)  \vee \sup_{\norm{x-y}\leq 2} \left[\log\left(\frac 1 {\mu_n(B_y)}\right)\right]^{\frac 1 {\beta}}\right\}. 
\end{align*} 
Over the event $A_n$, we therefore have uniformly in $0 \leq j \leq J_n$,
$$
 \sup_{\substack{y \in \partial^c\varphi_n(L_j)}}\norm{y}  
  \lesssim V_n^{\frac 1 \beta} \left[\ell_j+1 + {\ell_j^2/\gamma^2}\right]^{\frac 1 {\beta\wedge 1}}
  \lesssim V_n^{\frac 1 \beta} \ell_j^{\frac{2}{\beta\wedge 1}}
  \lesssim V_n^{\frac 1 \beta}  3^{jq_1},$$
for a large enough exponent $q_1 \geq 1$. 
 
\item \textbf{Regime 2: $J_n < j <\infty$.} In this regime, it will suffice
to provide a crude bound on $\partial^c\varphi_n(L_j)$.  
We begin with the following result, which is a quantitative generalization of Proposition~C.4 of~\cite{gangbo1996}. 
\begin{proposition}
\label{prop:phi_bound_superdiff}
Let $R,r \geq 4$. Let $\varphi:\bbR^d \to \bbR$ be a  $c$-concave function such that $|\varphi|\leq R$ over $B_{0,r}$. Then,
under conditions \ref{assm:global} and~\ref{assm:unbded_growth}, we have 
$$\sup_{y \in \partial^c \varphi(B_{0,r/ 2})} \norm{y}^{p-1} \leq C_{p,\kappa}( r^{p-1} +  R),$$
for a constant $C_{p,\kappa} > 0$ depending only on $p$ and $\kappa$.
\end{proposition}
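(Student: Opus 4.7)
The plan is to exploit the definition of the $c$-superdifferential with a carefully chosen competitor point $v$, in order to convert the bound $|\varphi| \leq R$ on $B_{0,r}$ into a quantitative bound on how far $y$ can lie from $x$. Fix $x \in B_{0,r/2}$ and $y \in \partial^c\varphi(x)$; by definition, for every $v \in \bbR^d$, $\varphi(v) - \varphi(x) \leq h(v-y) - h(x-y)$, and restricting to $v \in B_{0,r}$ gives
\[
h(x-y) - h(v-y) \leq \varphi(x) - \varphi(v) \leq 2R.
\]
The goal is then to pick $v$ so that the left-hand side is as large as possible; intuitively, we should push $v$ from $x$ directly toward $y$, shrinking $\|v-y\|$ while keeping $v \in B_{0,r}$.

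The natural choice is $v = x + (r/2)(y-x)/\|y-x\|$, which lies in $B_{0,r}$ since $\|x\| \leq r/2$. Before invoking it, I would dispose of two trivial cases. First, if $\|y-x\| < r/2$ then $\|y\| < r$, so $\|y\|^{p-1} \leq r^{p-1}$. Otherwise $v$ lies on the segment from $x$ toward $y$, and $\|v - y\| = \|y - x\| - r/2$. Second, if $\|v-y\| \leq 1$, then $\|y\| \leq \|v\| + 1 \leq r+1$ and again $\|y\|^{p-1} \lesssim_p r^{p-1}$.

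So assume $\|v-y\| > 1$, which implies $\|x-y\| > 1$ as well. Using the radial form $h(z) = \omega(\|z\|)$ from~\ref{assm:unbded_growth} and the convexity of $\omega$ on $(1,\infty)$, the tangent-line inequality at $\|v-y\|$ gives
\[
h(x-y) - h(v-y) = \omega(\|x-y\|) - \omega(\|v-y\|) \geq \frac{r}{2}\, \omega'(\|v-y\|) \geq \frac{r}{2\kappa}\|v-y\|^{p-1},
\]
where the last inequality uses the lower bound on $\omega'$ in~\ref{assm:unbded_growth}. Combining with the first display yields $\|v-y\|^{p-1} \leq 4\kappa R/r$, hence
\[
\|y\| \leq \|v\| + \|v-y\| \leq r + (4\kappa R/r)^{1/(p-1)}.
\]
Raising to the power $p-1$ and using $(a+b)^{p-1} \leq C_p(a^{p-1} + b^{p-1})$ together with $r \geq 4$ (so that $4\kappa R/r \leq \kappa R$) gives the claim $\|y\|^{p-1} \leq C_{p,\kappa}(r^{p-1} + R)$.

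The argument is almost entirely mechanical once the competitor $v$ is chosen; I do not expect any serious obstacle. The one delicate point is keeping track of the regime where either $\|y-x\|$ or $\|v-y\|$ is at most $1$, since $\omega$ is only guaranteed to equal $h(\|\cdot\|)$ on $(1,\infty)$ and the growth bound on $\omega'$ holds only there. Handling these small-distance cases separately by a trivial triangle-inequality estimate, as above, is what makes the proof go through cleanly.
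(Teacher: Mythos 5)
Your proof is correct and follows essentially the same route as the paper's: the same competitor point $x+\tfrac r2\frac{y-x}{\|y-x\|}$, the same use of the $c$-superdifferential inequality to bound $h(x-y)-h(v-y)$ by $2R$, and the same convexity argument to extract $\|v-y\|^{p-1}\lesssim \kappa R/r$. The only (harmless) differences are that you invoke the superdifferential inequality directly instead of the conjugate representation $\varphi(x)=c(x,y)-\varphi^c(y)$, and you argue via $\omega$ and the lower bound on $\omega'$ rather than via $\nabla h$ and the growth of $h$, with your explicit small-distance cases playing the role of the paper's ``$\|x-y\|\le r$'' case.
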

By Proposition~\ref{prop:phi_bound_superdiff}, it will suffice to bound
$|\varphi_n(x)|$ uniformly over $x\in L_j$, for all $j \geq J_n$. Recall from Lemma~\ref{lem:extension_potentials} that $|\varphi_n(x)|\leq \widebar R_n$, thus it suffices to bound $\widebar R_n$. 
Define $D_n = \sigma(4\log n)^{1 /\beta}$
and the event $A_n' = \{R_n \leq D_n\}$. Apply a union bound together with the sub-Weibull assumption
on $\mu$ and $\nu$ to deduce that  
\begin{equation}
\label{eq:Bn_highprob} 
\bbP((A_n')^\cp) \lesssim n\exp\left\{-\frac 1 2 \left(\frac{D_n}{\sigma}\right)^\beta\right\} \lesssim \frac 1 n.
\end{equation}
 Over the event $A_n'$, 
we therefore have  $|\varphi_n(x)| \leq \widebar R_n \lesssim D_n^p$ for all $x \in \bbR^d$.
Combined with
Proposition~\ref{prop:phi_bound_superdiff}, we deduce
\begin{align*}
\sup_{\substack{y \in \partial^c\varphi_n(L_j)}}\norm{y} \lesssim \left[\ell_j + D_n^{\frac p {p-1}}\right]
\lesssim (\log n)^{\frac p {\beta(p-1)}} 3^j
\lesssim (3^j\log n)^{q_2},
\end{align*} 
for a large enough exponent $q_2 > 0$.
\end{itemize}
In summary, the following holds over the event $A_n \cap A_n'$, 
uniformly in $j \geq 0$,
$$\sup_{\substack{y \in \partial^c\varphi_n(L_j)}}\norm{y}
 \leq H_{j} := C_2\begin{cases}
3^{jq_1}V_n^{\frac 1 \beta}, & 0 \leq j \leq J_n \\
(3^j\log n)^{q_2}, & j > J_n.
\end{cases}$$ 

{\bf Step 4: Bounding the Lipschitz and Semi-Concavity Moduli of $\varphi_n$.}
Define for a large enough constant $C_3 > 0$,
$$\xi_n(x) =  C_3 \norm x^2\sum_{j=0}^\infty  H_j^p I(x \in I_j),
$$
and set 
$\tilde \varphi_n(x) = \frac 1 \Lambda \varphi_n(x) -  \xi_n(x).$
Under condition~\ref{assm:unbded_growth}, it follows from Lemma~\ref{lem:semi_concave_general} that 
$\tilde\varphi_n|_{I_{jk}}$ is $C_4 H_j^{p}\ell_j$-Lipschitz
and concave 
for all $j \geq 0$ and $k=1, \dots, m_d$. 
Furthermore, the map $\tilde\varphi_n - \tilde\varphi_n(0)$ is bounded over $L_j$,
and hence also over $I_{jk}$, by $C_5 H_j^{p} \ell_j^2$. Thus, 
there exist sufficiently large exponents $r_i \geq 1$, $1 \leq i \leq 4$, such that if
$M{=}(M_j)_{j=0}^\infty$, $U {=} (U_j)_{j=0}^\infty$,~where
$$M_j = C\begin{cases}
{ 3^{jr_1}} , & 0 \leq j \leq J_n \\
(3^j\log n)^{r_2}, & j > J_n.
\end{cases}, 
\quad U_j = C'\begin{cases}
{ 3^{jr_3}} , & 0 \leq j \leq J_n \\
(3^j\log n)^{r_4}, & j > J_n.
\end{cases},$$
then ${ V_n^{-\frac{p}{\beta}}}\big(\tilde\varphi_n - \tilde\varphi_n(0) \big) \in \calK_{M,U}$, over the event $A_n \cap A_n'$. 

{\bf Step 5: Empirical Process Reduction.}
We deduce from Step 4 that, over $A_n \cap A_n'$,
\begin{align*}
\left|\int \varphi_n d(\mu_n-\mu)  \right|
 &= \Lambda \left| \int \tilde \varphi_n d(\mu_n-\mu) +  \int \xi_n d(\mu_n - \mu)\right|\\
 &\leq \Lambda  \left| \int (\tilde \varphi_n - \tilde\varphi_n(0))d(\mu_n-\mu)\right| + \Lambda \left|\int \xi_n d(\mu_n-\mu)\right|\\
 &\leq  \Lambda { V_n^{\frac{p}{\beta}}} \sup_{f \in \calK_{M,U}} \int f d(\mu_n-\mu)+ \Lambda \left| \int\xi_n d(\mu_n-\mu)\right|.
\end{align*}
Apply the same argument over the event 
$$E_n = A_n \cap A_n' \cap \bigcap_{j=0}^{J_n} \left\{
\inf_{x \in I_{j}}\inf_{\norm{x-y}\leq 2} \nu_n(B_y) \geq \frac{C_1}{2} { \exp(- \ell_j^2/\gamma^2)}\right\}$$
to deduce similarly that ${ V_n^{-p /\beta}}(\tilde \psi_n - \tilde\psi_n(0)) \in \calK_{M,U}$, 
where $\tilde\psi_n(y) = \frac 1 \Lambda \psi_n(y) - \xi_n(y)$, up to increasing the constants $C,C',C_3 > 0$, 
and that $\bbP(E_n^\cp) \lesssim 1/n$. We thus have, over $E_n$, 
\begin{align}
\left|\int \psi_n d(\nu_n-\nu) \right| \leq  \Lambda {V_n^{\frac p \beta}} \sup_{g \in \calK_{M,U}} \int g d(\nu_n-\nu) 
											+\Lambda \left| \int \xi_n d(\nu_n-\nu)\right|.
\end{align}
In the sequel, we write
\begin{align*}
\Delta_n {=} \sup_{f \in \calK_{M,U}} \int f d(\mu_n-\mu) {+} \sup_{g \in \calK_{M,U}} \int g d(\nu_n-\nu),
~
  \calX_n {=} \left|\int \xi_n d(\mu_n-\mu)\right| {+} \left| \int \xi_n d(\nu_n-\nu)\right|,
\end{align*}
so that, 
\begin{align*}
\bbE\left\{I_{E_n} \left[\left|\int \varphi_n d(\mu_n-\mu)\right| + \left| \int \psi_n d(\nu_n-\nu)\right| \right]\right\}
 &{ \lesssim     \Lambda \bbE\left[V_n^{\frac p \beta}\Delta_n\right] + \Lambda \bbE[\calX_n]} \\
 &{ \leq   \Lambda \left(\bbE\Big[V_n^{\frac {2p} \beta}\Big] \bbE\left[\Delta_n^2\right]\right)^{1/2} + \Lambda \bbE[\calX_n]}
\end{align*}
Since $\mu$ and $\nu$ are $(\sigma,\beta)$-sub-Weibull, 
and since $\rho \geq 2p/\beta$, it readily follows from Jensen's inequality that 
$\bbE V_n^{2p/\beta} \leq 2$. Deduce that
\begin{equation}
\label{eq:empirical_proc_red}
\bbE\left\{I_{E_n} \left[\left|\int \varphi_n d(\mu_n-\mu)\right| + \left| \int \psi_n d(\nu_n-\nu)\right| \right]\right\}
\lesssim   \Lambda \sqrt{\bbE\left[\Delta_n^2\right]} + \Lambda \bbE\left[\calX_n\right].
\end{equation} 

{\bf Step 6: Metric Entropy Bound.} Key to bounding ${ \bbE[ \Delta_n^2]}$ is the following
upper bound on the $L^2(\mu_n)$ metric entropy of the class $\calK_{M,U}$. 
\begin{proposition}
\label{prop:covering_unbounded}
{ There exists $C_5 > 0$ such that for all $\epsilon > 0$, }
$$\log N(\epsilon, \calK_{M,U}, L^2(\mu_n)) \leq { C_5 \cdot V_n^{\frac d 4}} \epsilon^{-\frac 2  d}.$$
\end{proposition}
\begin{proof}
Using Lemma~\ref{lem:bronshtein}, we prove the following result in Appendix~\ref{app:proof_peeling}, inspired
by  Corollary 2.7.4 of \cite{vandervaart1996}.
  \begin{lemma}
  \label{lem:covering_peeling}
 For all $\epsilon > 0$,
 $$\log N(\epsilon, \calK_{M,U}, L^2(\mu_n)) \lesssim
\left(\frac 1 \epsilon\right)^{\frac d 2} \left(  \sum_{j=0}^\infty\sum_{k=1}^{m_d} \left(U_j+\diam(I_{jk}) M_j\right)^{\frac{2d}{d+4}}\mu_n(I_{jk})^{\frac d {d+4}}\right)^{\frac {4+d}{4}}.$$ 
 \end{lemma} 
In particular, Lemma~\ref{lem:covering_peeling} implies, 
\begin{align*}
\log N(\epsilon, \calK_{M,U}, L^2(\mu_n))  
 &\lesssim\left(\frac 1 \epsilon\right)^{\frac d 2} \left(  \sum_{j=0}^\infty  \left(U_j+3^j M_j\right)^{\frac{2d}{d+4}}\mu_n(I_{j})^{\frac d {d+4}}\right)^{\frac {4+d}{4}}.
\end{align*}
By Markov's inequality, notice that for all $j \geq 0$, 
$$\mu_n(I_{j}) \leq \mu_n(B_{3^{j-1}}^\cp) \lesssim \frac{\int \exp\left(\frac{\norm x^\beta}{2\rho\sigma^\beta}\right)d\mu_n(x)}{\exp\left(3^{(j-1)\beta}/2\rho\sigma^\beta\right)}
 \leq  V_n \exp(-c_1 3^{j\beta}),$$
%
%
so that
\begin{align*} 
\log N(\epsilon, \calK_{M,U}, L^2(\mu_n))  
 &\lesssim\left(\frac 1 \epsilon\right)^{\frac d 2} V_n^{\frac d 4}\left(  \sum_{j=0}^\infty  \left(U_j+3^j M_j\right)^{\frac{2d}{d+4}}
 \exp(-c_1 3^{j\beta})\right)^{\frac {4+d}{4}}\\
 &\lesssim \left(\frac 1 \epsilon\right)^{\frac d 2} V_n^{\frac d 4}\Bigg( 
 \sum_{j=0}^{J_n}  \left( { 
 3^{jr_1} + 3^j 3^{jr_3}}\right)^{\frac {2d}{d+4}}\exp(-c_1 3^{j\beta}) \\[-0.1in] 
 	& \quad + 
       \sum_{j=J_n+1}^{\infty}  \left( (3^j\log n)^{r_2} + 3^j(3^j\log n)^{r_4}\right)^{\frac {2d}{d+4}}  
       							\exp(-c_1 3^{j\beta}) \Bigg)^{\frac {4+d}{4}}.
\end{align*}
We deduce that there exist   constants $c_2,c_3 > 0$ such that 
\begin{align*}
\log & N(\epsilon, \calK_{M,U}, L^2(\mu_n)) \\
  &\lesssim \left(\frac 1 \epsilon\right)^{\frac d 2} V_n^{\frac d 4} \left[ 
  \sum_{j=0}^{J_n} 3^{jc_2}\exp(-c_3 3^{j\beta}) + 
  (\log n)^{c_2}\sum_{j=J_n+1}^{\infty} 3^{jc_2 }   \exp(-c_3 3^{j\beta})\right]^{\frac{4+d}{4}}.
 \end{align*}
Notice that $\sum_{j=0}^\infty 3^{jc_2}\exp(-c_3 3^{j\beta}) < \infty$, thus the first summation on the right-hand side of the above display 
is finite. For the second summation, notice that there exists $J_0 > 0$ such that for all $j \geq J_0$, 
$3^{jc_2} \leq \exp(c_4 3^{j\beta})$ where $c_4 = c_3/2$. Thus, since $J_n \asymp \log\log n$, we obtain 
\begin{align*}
\log & N(\epsilon, \calK_{M,U}, L^2(\mu_n))  \\
  &\lesssim \left(\frac 1 \epsilon\right)^{\frac d 2}  V_n^{\frac d 4}\left[1
   +   (\log n)^{c_2}\sum_{j=J_n+1}^{\infty}  \exp(- c_4  3^{j\beta})\right]^{\frac{4+d}{4}}\\
  &\lesssim \left(\frac 1 \epsilon\right)^{\frac d 2} V_n^{\frac d 4} \left[1
   +   (\log n)^{c_2}  \exp\Big(- c_4 (3^{(J_n+1)\beta}+1)\Big)\right]^{\frac{4+d}{4}}
  \lesssim \left(\frac 1 \epsilon\right)^{\frac d 2}  V_n^{\frac d 4}. 
 \end{align*} 
This proves Proposition~\ref{prop:covering_unbounded}.
\end{proof}

{\bf Step 7: Chaining.}
Equipped with Proposition~\ref{prop:covering_unbounded}, 
we are now in a position to bound the expected (squared) 
supremum of the empirical process indexed by $\calK_{M,U}$.
We begin by noting the following.
\begin{lemma}
\label{lem:emp_proc_variance}
It holds that
\begin{align*}
\bbE\left[\left(\sup_{f \in \calK_{M,U}} \int f d(\mu_n-\mu)\right)^2\right] 
\lesssim \frac{(\log n)^{2r_4}}{n}+ \bbE\left[\sup_{f \in \calK_{M,U}} \int f d(\mu_n-\mu)\right]^2.
 \end{align*}
 \end{lemma}

By combining Lemma~\ref{lem:emp_proc_variance} with Lemma~\ref{lem:chaining} and Proposition~\ref{prop:covering_unbounded}, 
we deduce that for all $\tau > 0$, 
\begin{align} 
\nonumber 
\bbE\left[\left(\sup_{f \in \calK_{M,U}} \int f d(\mu_n-\mu)\right)^2\right] 
\lesssim \frac{(\log n)^{2r_4}}{n} + \left(\tau  + \frac {\bbE  V_n^{ \frac d 4}} {\sqrt n} \int_\tau^\infty  \left(\frac {1}{\epsilon}\right)^{\frac d 4} d\epsilon\right)^2.
 \end{align}
Since $\mu$ and $\nu$ are $(\sigma,\beta)$-sub-Weibull, and since 
$\rho \geq d/4$, we again have by Jensen's inequality that
$\bbE[V_{i,n}^{d/4}] \leq 2$
for both $i=1,2$, implying that  
$\bbE[ V_n^{d/4} ]\leq c_5$. 
Choosing $\tau \asymp n^{-2/d}$ in the above display thus leads to a bound scaling at the rate $n^{-{4}/d}$. 
Upon repeating the same argument for $\nu_n$, we obtain
\begin{equation}
\label{eq:main_Detlan_bound}
\sqrt{\bbE[  \Delta_n^2]} \lesssim n^{-2/d}.
\end{equation}
{\bf Step 8: Conclusion.}
Let $(\varphi_0,\psi_0) \in \Phi_c(\mu,\nu)$ be a pair of optimal Kantorovich potentials between $\mu$
and $\nu$. It follows similarly as in the proof of Theorem~\ref{thm:ub_general} that
$$\calT_c(\mu_n,\nu_n) - \calT_c(\mu,\nu) \geq \int \varphi_0d(\mu_n-\mu) + \int \psi_0 d(\nu_n-\nu) =: \Gamma_n.$$
Combine the above display with equations~\eqref{eq:reduction_unbded},
\eqref{eq:empirical_proc_red} and~\eqref{eq:main_Detlan_bound}  to deduce
\begin{align}
\label{eq:conclusion_unbounded}
\nonumber \bbE \big| &\calT_c(\mu_n,\nu_n) - \calT_c(\mu,\nu)\big| \\
\nonumber &\leq \bbE|\Gamma_n| + \bbE\left[ \int \varphi_n d(\mu_n-\mu)\right] +  \bbE\left[ \int \psi_n d(\nu_n-\nu)\right]\\
\nonumber  &\lesssim \bbE|\Gamma_n| + \Lambda\Big\{ \bbE[\calX_n] + {\sqrt{\bbE [\Delta_n^2]}} \Big\} + \bbE\left[I_{E_n^\cp} \int \varphi_n d(\mu_n-\mu)\right] +  
 							\bbE\left[ I_{E_n^\cp}\int \psi_n d(\nu_n-\nu)\right]\\
\nonumber  &\lesssim \bbE|\Gamma_n| +\Lambda\Big\{ \bbE[\calX_n] + 
\sqrt{\bbE [\Delta_n^2]} \Big\} + \sqrt{\bbP(E_n^\cp) \bbE[ \widebar R_n^2
] } \\
 &\lesssim \bbE|\Gamma_n| + \Lambda\Big\{ \bbE[\calX_n] + n^{-2/d} \Big\} + n^{-1/2} (\log n)^{c_1}. 
\end{align}
The quantities $\Gamma_n$ and $\calX_n$ are simple to bound in expectation, as we now show.
\begin{lemma}
\label{lem:root_n_unbounded}
For any $\epsilon > 0$, $\bbE |\Gamma_n|\vee \bbE[\calX_n]\lesssim n^{\epsilon - \frac 1 2}.$
\end{lemma}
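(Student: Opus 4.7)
The plan is to bound $\bbE|\Gamma_n|$ and $\bbE[\calX_n]$ separately; in each case the bound obtained is $\lesssim (\log n)^{O(1)}/\sqrt n$, which is dominated by $n^{\epsilon-1/2}$ once $n$ is large (with small-$n$ cases absorbed into the implicit constant).

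\textbf{For $\Gamma_n$.} Since $\Gamma_n$ is a sum of two centered empirical averages, Chebyshev's inequality yields the $n^{-1/2}$ rate whenever $\varphi_0 \in L^2(\mu)$ and $\psi_0 \in L^2(\nu)$. Existence of optimal $c$-concave potentials follows from \ref{assm:global} and \ref{assm:unbded_growth}, which force $c(x,y) \lesssim 1 + \|x\|^p + \|y\|^p$, an integrable upper bound by the sub-Weibull hypothesis. A polynomial growth bound on $\varphi_0$ and $\psi_0$ can be extracted by applying Theorem~\ref{thm:coupling_quantitative} and Proposition~\ref{prop:phi_bound_superdiff} to the population measures (in the spirit of Steps~1--3 above), yielding $|\varphi_0|, |\psi_0| \lesssim (1+\|\cdot\|)^{O(1)}$, and the sub-Weibull property supplies all polynomial moments.

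\textbf{For $\calX_n$.} Decompose $\xi_n = V_n^{p/\beta}\zeta_1 + \zeta_2$ by splitting the defining sum at $j = J_n$, where
\[
\zeta_1(x) = C_3 C_2^p \|x\|^2 \sum_{j=0}^{J_n} 3^{jq_1 p} I(x \in I_j),\qquad
\zeta_2(x) = C_3 C_2^p \|x\|^2 \sum_{j > J_n} (3^j\log n)^{q_2 p} I(x \in I_j).
\]
Both are deterministic. Using $3^{J_n} \asymp \sqrt{\log n}$, the function $\zeta_1$ is supported in $L_{J_n}$ with $\|\zeta_1\|_\infty \lesssim (\log n)^{O(1)}$, while $\zeta_2$ has polynomial growth in $\|x\|$ scaled by $(\log n)^{O(1)}$. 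The $\zeta_2$ contribution is immediate from Chebyshev: $\bbE|\int \zeta_2\,d(\mu_n - \mu)| \leq \|\zeta_2\|_{L^2(\mu)}/\sqrt n \lesssim (\log n)^{O(1)}/\sqrt n$ via sub-Weibull moments, and analogously for $\nu_n$.

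The $\zeta_1$ contribution contains the random prefactor $V_n^{p/\beta}$, which I decouple by H\"older's inequality with $r \in (1,\infty)$ and conjugate $r' = r/(r-1)$:
\[
\bbE\bigl[V_n^{p/\beta}\bigl|\textstyle\int \zeta_1\,d(\mu_n-\mu)\bigr|\bigr]
\leq (\bbE V_n^{rp/\beta})^{1/r}\cdot \bigl(\bbE\bigl|\textstyle\int \zeta_1\,d(\mu_n-\mu)\bigr|^{r'}\bigr)^{1/r'}.
\]
Jensen applied to $V_{i,n}^s = (n^{-1}\sum_k e^{\|X_k\|^\beta/(2\rho\sigma^\beta)})^s$ for $s \geq 1$ yields $\bbE V_n^s \lesssim 1$ provided $s \leq \rho = d(2p+\beta)/(8\beta)$, and the requirement $rp/\beta \leq \rho$ becomes $r \leq d(2p+\beta)/(8p)$, which strictly exceeds $1$ because $d \geq 5$. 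Choosing any such $r > 1$ bounds the first factor by a constant. For the second factor, Marcinkiewicz-Zygmund applied to the bounded mean-zero i.i.d.\ summands $\zeta_1(X_i) - \bbE\zeta_1$ gives the bound $\lesssim_{r'} \|\zeta_1\|_\infty/\sqrt n \lesssim (\log n)^{O(1)}/\sqrt n$. The main obstacle is precisely this decoupling of $V_n$ from the empirical process without losing the $n^{-1/2}$ rate: it depends critically on $\rho$ lying strictly above $p/\beta$, which is exactly what the choice of $\rho$ made in Proposition~\ref{prop:covering_unbounded} delivers once $d \geq 5$.
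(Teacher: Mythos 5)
Your proof is correct, and for the $\Gamma_n$ term it is essentially the paper's argument: normalize the potentials, use Theorem~\ref{thm:coupling_quantitative} to get polynomial growth (the paper combines it with Lemma~\ref{lem:semi_concave_general} to pass from the superdifferential bound to a growth bound on $\varphi_0$ itself, which is the tool you actually need here rather than Proposition~\ref{prop:phi_bound_superdiff}, whose implication runs in the opposite direction; this is a minor mis-citation, not a gap), then conclude by a second-moment/Chebyshev bound using sub-Weibull moments. For the $\calX_n$ term you do something genuinely more careful than the paper. The paper simply asserts $|\xi_n(x)|\lesssim(\log n\,\norm{x})^{q'}$ and invokes the same Chebyshev argument, which quietly ignores the random factor $V_n^{p/\beta}$ sitting inside $\xi_n$ on the blocks $j\le J_n$. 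Your exact decomposition $\xi_n=V_n^{p/\beta}\zeta_1+\zeta_2$ with deterministic $\zeta_1,\zeta_2$, followed by H\"older with exponent $r\le d(2p+\beta)/(8p)$ (which exceeds $1$ precisely because $d\ge 5$, so that $\bbE V_n^{rp/\beta}\lesssim 1$ by the same Jensen argument the paper uses in Step~7) and a Marcinkiewicz--Zygmund bound on the bounded, mean-zero $\zeta_1$-averages, supplies the decoupling that the paper's one-line treatment leaves implicit, at no cost in the rate beyond the polylogarithmic factors already absorbed by the $n^{\epsilon}$ slack. So the two proofs share the same skeleton, but your version closes a step the paper glosses over; the paper's buys brevity, yours buys a fully rigorous handling of the randomness in $\xi_n$.
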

Since $d \geq 5$, combining equation~\eqref{eq:conclusion_unbounded} with Lemma~\ref{lem:root_n_unbounded} leads to the claim.\qed 

\subsection{Proof of Theorem~\ref{thm:coupling_quantitative}} 
\label{sec:proof_coupling_quantitative}
Fix $x \in \bbR^d$. 
If $\partial^c \varphi(x)$ is empty, then there is nothing to show, thus
suppose otherwise. Choose $y_x \in \partial^c \varphi(x)$.
Let $K_p =2(3 \kappa^2)^{1/(p-1)}$, and notice that we may assume
\begin{equation}
\label{eq:y_x_assm}
\norm{y_x} \geq 4( K_p+1)(\norm x + 1),
\end{equation}   
as otherwise we are done. 
In particular,  this assumption implies $\norm{ y_x-x} \geq 4$, thus the following ball is non-empty
$$U = \left\{u \in \bbR^d: \norm{u-y_x} \leq  \norm{x-y_x} - 1 \right\}.$$
Furthermore, define $\xi = (y_x-x)/\norm{y_x-x}$, and note that
the ball $S:= B_{x + 2\xi}$ of radius 1 centered at $x+2\xi$ is contained in $U$.

If $\partial^c\varphi(S) = \emptyset$, then the condition $\supp(\pi) \subseteq \partial^c\varphi$ implies $\mu(S) = 0$, 
in which case the right-hand side of equation~\eqref{eq:coupling_quantitative_part1} is infinite and the claim is trivial. 
Thus, assume otherwise, and 
pick $u \in S$ for which $\partial^c \varphi(u)$ is nonempty.  Notice that $\norm{x-u} \leq 3$.
Furthermore, let $y_u \in \partial^c\varphi(u)$ be arbitrary.
Since $\varphi$ is $c$-concave, the set $\partial^c\varphi$ is $c$-cyclically monotone
by Lemma~\ref{lem:kantorovich_background}(iii). In particular,  
$$c(x,y_x) - c(u, y_x)  \leq c(x,y_u) -c(u,y_u) .$$
Thus, using condition~\ref{assm:unbded_growth},
\begin{align*}
c(x,y_u) - c(u, y_u)
 &\geq \omega(\norm{x-y_x}) - \omega(\norm{u-y_x}) &
 (\text{Since }\norm{x-y_x} \wedge \norm{u-y_x} \geq 1) \\
 &\geq  \omega(\norm{x-y_x}) - \omega (\norm{x-y_x}-1) & (\text{Since } u \in U \text{ and } \omega
 \text{ is increasing)}\\ 
 &\geq \omega'(\norm{x-y_x}-1) & \text{(By convexity of } \omega) \\
 &\geq \frac 1 \kappa(\norm{x-y_x}-1)^{p-1} & (\text{By condition } \ref{assm:unbded_growth})\\
 &\geq \frac{1}{\kappa 2^{p-1}}\norm{x-y_x}^{p-1}.& (\text{Since }  \norm{x-y_x} \geq 4)
\end{align*}
Now, conditions~\ref{assm:global} and~\ref{assm:unbded_growth} imply that $h(z) \leq \kappa$ for
all $z \in B_{0,1}$. Furthermore, the preceding display combined with equation~\eqref{eq:y_x_assm} implies that $c(x,y_u) > \kappa$, 
whence $\norm{x-y_u} \geq 1$. We may thus again apply conditions~\ref{assm:global} and~\ref{assm:unbded_growth} to obtain,
\begin{align*}
c(x,y_u) &- c(u, y_u)
 \leq \langle \nabla h(x-y_u), x-u\rangle \leq  \omega'(\norm{x-y_u}) \norm{x-u}   \leq 3\kappa \norm{x-y_u}^{p-1}.
\end{align*}
We deduce that
$\norm{x-y_x} \leq K_p  \norm{x-y_u},$
whence,
\begin{align*} 
\norm{y_x} 
 &\leq  K_p \norm{x-y_u} + \norm x  
 \leq  K_p \norm{y_u} + 
 		 \norm x (K_p+ 1) 
 \leq   K_p \norm{y_u} + 
 		 \frac 1 4 \norm{y_x},
\end{align*}
where the last inequality is due to equation~\eqref{eq:y_x_assm}. 
We thus have
$\norm{y_u}\geq C\norm{y_x}$
for a constant $C > 0$ depending only on $d,p,\kappa$. It follows that,
$$\partial^c\varphi(S) \subseteq \left\{v \in \bbR^d: \norm v \geq C \norm{y_x} \right\}.$$
Given $Y \sim \nu$, we deduce from the sub-Weibull condition on $\nu$ that
$$\nu(\partial^c \varphi(S)) \leq \bbP\left(\norm Y \geq C\norm{y_x}\right) 
 \lesssim \exp\left(-\frac{C^\beta \norm{y_x}^{\beta}}{2\sigma^\beta}\right).$$
On the other hand,  using the fact that
$\supp(\pi)\subseteq\partial^c \varphi$, one has  
\begin{align*}
\nu(\partial^c \varphi(S)) = \pi(\bbR^d\times \partial^c \varphi(S)) \geq \pi(S\times\partial^c \varphi(S))
= \mu(S),
\end{align*}
so that, \begin{equation}
\label{eq:pf_displacement_first_claim}
\exp\left(-\frac{C^\beta\norm{y_x}^{\beta}}{2\sigma^{\beta}}\right) \gtrsim \nu(\partial^c \varphi(S))
\geq  \mu(S) \geq \inf_{y:\norm{x-y}\leq 2} \mu(B_y).
\end{equation}
The first claim follows. 
To prove the second claim, 
recall from the definition of $(\gamma,b)$-super-Gaussianity that
for all $y\in \bbR^d$ such that $\norm{x-y}\leq 2$, 
\begin{align*}
\begin{multlined}
\mu(B_{y}) 
\geq \frac {b}{\sqrt{2\pi\gamma^2}} \calL(B_y) \inf_{z \in B_y} \exp(-\|z\|^2/2\gamma^2)
\geq C_1 \exp(-\|x\|^2/C_1),
\end{multlined}
\end{align*}
for a constant $C_1 > 0$ depending on $d,b,\gamma$. 
By Lemma~\ref{lem:kantorovich_background}(iv), since $\calT_c(\mu,\nu) < \infty$, any optimal coupling $\pi$ 
between $\mu$ and $\nu$ lies in the support of a $c$-concave  potential $\varphi$. We may 
therefore apply equation~\eqref{eq:pf_displacement_first_claim} to deduce that, for some constant $C' > 0$, any such coupling satisfies
$$\norm{y} \leq C' \sigma (\norm x + 1)^{\frac{2}{\beta}}, \quad \pi\text{-a.e. } (x,y).$$
Furthermore, since $\mu$ is absolutely continuous with respect to the Lebesgue measure,
notice that the conditions of \cite{gangbo1996}, Theorem 1.2, 
are satisfied under conditions~\ref{assm:global} and~\ref{assm:unbded_growth} and the strict convexity
of $h$. Therefore, there exists a unique optimal transport map $T$ from $\mu$ to $\nu$, so that the measure
$\pi$ in the above display may be taken to be $(Id,T)_\# \mu$. The claim follows.
\qed

\section{Lower Bounds}
\label{sec:lower_bounds}
In this section, we derive two lower bounds which imply that the rates of convergence derived in Sections~\ref{sec:upper_bounds_compact}
and~\ref{sec:upper_bounds_unbounded} are typically unimprovable. In Section~\ref{sec:lower_bounds_empirical}, we obtain
lower bounds on the rate of convergence of the empirical optimal transport cost, 
while in Section~\ref{sec:lower_bounds_minimax}, we derive a minimax lower bound which implies that,
up to polylogarithmic factors, no estimator
of $\calT_c(\mu,\nu)$ can achieve a faster rate of convergence than the empirical estimator uniformly over all pairs
of measures $\mu,\nu$.
 
In order to state our lower bounds, we   
require an assumption on the maximal H\"older exponent $\alpha \in (0,2]$ achievable by the cost $h$. 
To state such an assumption, recall that our upper bounds were based, for instance, on the condition
$\Lambda = 1\vee \norm h_{\calC^\alpha(\calZ_1)} < \infty$, for some 
$\alpha\in(0,2]$, which in particular implies that for all $z,z_0 \in \calZ$,
$$h(z) - h(z_0) \leq 
\begin{cases}
\Lambda \norm{z-z_0}^\alpha, & \alpha \leq  1 \\
\langle \nabla h(z_0), z-z_0\rangle + \Lambda \norm{z-z_0}^\alpha, & \alpha > 1
\end{cases}
.$$ 
We shall assume the following dual condition throughout this section.
 \begin{enumerate}[leftmargin=1.65cm,listparindent=-\leftmargin,label=(\textbf{H4})]   
\item \label{assm:lb}  
$\calX$ and $\calY$ are convex sets with nonempty interior, 
and are such that $h$ is differentiable over $\calZ=\calX-\calY$.
Furthermore, there exist
$\lambda > 0$,  $\alpha \in (0,2]$,
and $z_0 = x_0-y_0 \in \calZ$ such that $x_0 \in \mathrm{int}(\calX)$, $y_0 \in \mathrm{int}(\calY)$,
and for all $z \in \calZ$, 
$$h(z) - h(z_0) 
\geq \begin{cases}
\lambda \norm{z-z_0}^\alpha, & \alpha \leq  1 \\
\langle \nabla h(z_0), z-z_0\rangle + \lambda \norm{z-z_0}^\alpha, & \alpha > 1
\end{cases}
.$$

\end{enumerate} 
Notice that condition~\ref{assm:lb} implies that $\calX$ and $\calY$ have positive Lebesgue measure over $\bbR^d$.
The presence of this assumption can be anticipated from the fact that 
empirical optimal transport costs may achieve improved rates of convergence
when $\calX$ and $\calY$ have intrinsic dimension less than $d$ \citep{weed2019}. 
It is straightforward to verify that conditions~\ref{assm:global}
and~\ref{assm:lb} are satisfied by the 
cost $h(x) = \norm x^p$ when  
$1 \leq  p \leq 2$  with $\alpha=p$ and $z_0 = 0$. These conditions are also satisfied for $2 < p < \infty$
and $\alpha=2$, whenever there exists a neighborhood of zero which is not contained in $\calZ$.  
We also note that condition~\ref{assm:lb} is satisfied with $\alpha=2$ by any 
differentiable and $(2\lambda)$-strongly convex  function $h$ over $\calZ$.

Finally, we assume throughout this section
that $X_i$ is independent of $Y_j$ for all $1 \leq i,j \leq n$. 
Though this condition is not needed to derive our upper bounds, we do not preclude 
the possibility that they may be sharpened under particular dependence structures between
the samples from $\mu$ and $\nu$.
\subsection{Lower Bounds for the Empirical Optimal Transport Cost}
 \label{sec:lower_bounds_empirical}

 We begin with the following lower bound on the rate of convergence of the empirical optimal transport cost.

\begin{proposition} 
\label{prop:lb_empirical}
Assume conditions  \ref{assm:global} and \ref{assm:lb}. 
Then, 
$$\sup_{\substack{\mu\in\calP(\calX) \\ \nu\in\calP(\calY)}} \bbE_{\mu,\nu} \big| \calT_c(\mu_n,\nu_n) - \calT_c(\mu,\nu) \big| 
 \gtrsim \lambda n^{-\alpha/d}.$$
\end{proposition}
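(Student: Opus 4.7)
The plan is to exhibit specific measures $\mu,\nu$ for which the deviation has the desired order, mirroring the location-family example of Section~\ref{sec:example_location}. Since $x_0 \in \mathrm{int}(\calX)$ and $y_0 \in \mathrm{int}(\calY)$, choose $r > 0$ so that $B_{x_0, r} \subseteq \calX$ and $B_{y_0, r} \subseteq \calY$. Let $\mu$ be the uniform distribution on $B_{x_0, r}$ and set $\nu = T_\# \mu$ where $T(x) = x - z_0$, so that $\nu$ is the uniform distribution on $B_{y_0, r}$. By the Jensen-type argument of Section~\ref{sec:example_location}, convexity and evenness of $h$ give $\calT_c(\mu,\nu) = h(z_0)$, attained by the coupling $(Id, T)_\# \mu$.

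The next step is to transfer condition~\ref{assm:lb} pointwise to any optimal coupling $\pi_n \in \Pi(\mu_n,\nu_n)$. In the case $\alpha > 1$, integrating the lower bound on $h(z)-h(z_0)$ gives
$$\calT_c(\mu_n,\nu_n) \geq h(z_0) + \langle \nabla h(z_0),\, \bar X_n - \bar Y_n - z_0\rangle + \lambda \int \|x - y - z_0\|^\alpha d\pi_n(x,y),$$
with the gradient term absent when $\alpha \leq 1$. Pushing forward $\pi_n$ by $S(x,y) = (x, y+z_0)$ yields a coupling of $\mu_n$ and the translated empirical measure $\tilde\nu_n := (y\mapsto y+z_0)_\# \nu_n$, and the latter is (by construction of $\nu$) an empirical measure of $n$ samples drawn i.i.d.~from $\mu$, independently of $\mu_n$. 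Hence the remainder term dominates $W_\alpha^\alpha(\mu_n,\tilde\nu_n)$. By the marginal constraints, $\int(x-y)d\pi_n = \bar X_n - \bar Y_n$, and the construction of $\nu$ gives $\bbE[\bar X_n - \bar Y_n] = z_0$, so the linear term vanishes upon taking expectations. Therefore
$$\bbE\,\calT_c(\mu_n,\nu_n) - \calT_c(\mu,\nu) \geq \lambda\, \bbE\, W_\alpha^\alpha(\mu_n, \tilde\nu_n),$$
and since the left-hand side is a signed quantity bounded above by the expected absolute deviation, this gives the desired lower bound on $\bbE|\calT_c(\mu_n,\nu_n) - \calT_c(\mu,\nu)|$ modulo the two-sample Wasserstein estimate.

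The main obstacle is therefore establishing $\bbE\, W_\alpha^\alpha(\mu_n, \tilde\nu_n) \gtrsim n^{-\alpha/d}$ for independent empirical measures drawn from the uniform distribution on $B_{x_0,r}$. This is a standard two-sample lower bound for absolutely continuous measures of full dimension, and can be proved in two closely related ways. The first uses the deterministic Dudley estimate (cited in Section~\ref{sec:past_work}) asserting $W_1(\hat\mu,\mu) \geq c N^{-1/d}$ for any discrete measure $\hat\mu$ on $N$ points; applied to $\tfrac{1}{2}(\mu_n+\tilde\nu_n)$ (which is supported on at most $2n$ atoms) together with the triangle inequality and a symmetrization over the two samples, this yields $\bbE\, W_1(\mu_n,\tilde\nu_n) \gtrsim n^{-1/d}$, from which $\bbE\, W_\alpha^\alpha(\mu_n,\tilde\nu_n) \gtrsim n^{-\alpha/d}$ follows via Jensen's inequality (using $W_\alpha \geq W_1$ when $\alpha \geq 1$, and using the fact that $W_\alpha^\alpha$ is itself a metric when $\alpha \leq 1$). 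The second, more direct approach partitions $B_{x_0,r}$ into $\Theta(n)$ cells of radius $\asymp n^{-1/d}$ and equal $\mu$-mass; a Poisson-binomial variance computation shows that the cell-wise sample-count discrepancies between $\mu_n$ and $\tilde\nu_n$ force $\Theta(1)$ atoms of mass $1/n$ to be transported across distance $\gtrsim n^{-1/d}$, yielding the claimed cost of order $n^{-\alpha/d}$. Either argument completes the proof.
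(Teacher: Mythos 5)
Your construction and the use of condition \ref{assm:lb} are exactly the paper's: uniform measures on two balls related by the translation $z\mapsto z-z_0$, the Jensen argument giving $\calT_c(\mu,\nu)=h(z_0)$, the pointwise Taylor-type lower bound integrated against an optimal coupling, and the observation that the first-order term has zero mean by the marginal constraints. Where you diverge is the endgame. The paper never needs a two-sample empirical transport lower bound: it first bounds the one-sample quantity, $\bbE[\calT_c(\mu_n,\nu)-\calT_c(\mu,\nu)]\geq\lambda\,\bbE W_1^\alpha(\mu_n,\mu)\gtrsim\lambda n^{-\alpha/d}$ via Dudley's deterministic bound, and then transfers to $\calT_c(\mu_n,\nu_n)$ by fixing optimal potentials for $(\mu_n,\nu)$ and using independence of the $Y$-sample to get $\bbE\,\calT_c(\mu_n,\nu_n)\geq\bbE\,\calT_c(\mu_n,\nu)$. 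You instead lower-bound the two-sample cost directly by $\lambda\,\bbE W_\alpha^\alpha(\mu_n,\tilde\nu_n)$, with $\tilde\nu_n$ an independent empirical measure of $\mu$, and then must prove a two-sample lower bound; this is legitimate and arguably more direct, but be aware that of your two proposed finishes, the one that works cleanly is the symmetrization/Jensen step through Kantorovich--Rubinstein duality, $\bbE[W_1(\mu_n,\tilde\nu_n)\mid\mu_n]\geq W_1(\mu_n,\mu)$, combined with Dudley's deterministic bound applied to $\mu_n$ itself; the ``mixture of $2n$ atoms plus triangle inequality'' variant alone does not close, since the upper constant in $\bbE W_1(\mu_n,\mu)\lesssim n^{-1/d}$ can swamp the lower constant from the $(2n)^{-1/d}$ bound. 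Your binning alternative is also mis-stated: moving $\Theta(1)$ atoms of mass $1/n$ by $n^{-1/d}$ yields only $n^{-1-\alpha/d}$; you need a constant fraction of the total mass displaced by $\gtrsim n^{-1/d}$, and since mass can leak to adjacent cells over arbitrarily small distances this requires a dual witness or a well-separated sub-collection of cells, at which point symmetrization is simpler. Finally, for $\alpha\leq 1$ the detour through $W_1$ gives only $n^{-1/d}$, which is too small; you must run the Dudley covering argument and the duality directly for the metric cost $\norm{\cdot}^\alpha$ (your parenthetical gestures at this, and the paper is equally terse for that case). In short: the paper's route buys an entirely one-sample argument at the price of the potential-plus-independence trick; yours buys a single direct reduction at the price of a (standard, but necessary to state correctly) two-sample symmetrization step.
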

Proposition~\ref{prop:lb_empirical} implies that the upper bounds in Theorems~\ref{thm:ub_general}, \ref{thm:main_unbounded},
and Corollaries thereafter cannot generally be improved, provided that the H\"older exponent $\alpha$ therein 
is chosen maximally in the sense of condition~\ref{assm:lb}. As we now show, our lower bound is constructive, and 
is typically achieved by absolutely continuous measures differing by a location translation, as in Example~\ref{sec:example_location}.
 \begin{proof} 
We prove the claim assuming $\alpha \in (1,2]$, and an analogous
argument may be used when $\alpha \in (0,1]$. 
Since $x_0 \in \mathrm{int}(\calX)$ and $y_0 \in \mathrm{int} (\calY)$, there exists $\epsilon > 0$
such that $\calX_0:=B_{x_0,\epsilon} \subseteq \calX$ and 
$\calY_0 := B_{y_0, \epsilon} \subseteq \calY$. Define the measures
$$\mu = \frac{\calL|_{\calX_0}}{\calL(\calX_0)}, \quad \nu = \frac{\calL|_{\calY_0}}{\calL(\calY_0)},$$
%
where recall that $\calL$ is the Lebesgue measure on $\bbR^d$. 
By construction, $\nu = {T_0}_\# \mu$ where $T_{0}(x) = x+z_0$. 
Since $h$ is convex, it follows by the same argument as in Example~\ref{sec:example_location}
that $T_0$ is an optimal transport map from $\mu$ to $\nu$. 

Let $\gamma_n$ denote an optimal coupling between $\mu_n$ and $\nu$ with respect to the cost $c$. 
Then, by condition~\ref{assm:lb},
\begin{align*}
\calT_c(\mu_n, \nu) - \calT_c(\mu,\nu) 
 &= \int \Big[ c(x,y) - c(x,T_0(x)) \Big] d \gamma_n(x, y) \\
 &= \int \Big[ h(y-x) - h(z_0) \Big] d \gamma_n(x, y) \\
 &\geq \int \Big[ \langle \nabla h(z_0), y-x-z_0\rangle + \lambda \norm{y-x-z_0}^\alpha\Big] d \gamma_n(x, y) \\
 &= \int \Big[ \langle \nabla h(z_0), y-x\rangle + \lambda \norm{y-x}^\alpha\Big] d \pi_n(x, y),
\end{align*}
where $\pi_n = (Id, T_0^{-1})_\# \gamma_n \in \Pi(\mu_n,\mu)$. It follows that 
\begin{align*}
\calT_c(\mu_n, \nu) - \calT_c(\mu,\nu) 
 &\geq \int  \langle \nabla h(z_0), y-x\rangle d \pi_n(x, y) + \lambda W_\alpha^\alpha(\mu_n,\mu)\\
 &\geq \int  \langle \nabla h(z_0), \cdot \rangle d (\mu-\mu_n) + \lambda W_1^\alpha(\mu_n,\mu). 
\end{align*}
The first order term on the final line of the above display clearly has mean zero, whence 
\begin{align*}
\bbE \Big[\calT_c(\mu_n, \nu) - \calT_c(\mu,\nu) \Big]
 &\geq \lambda \bbE W_1^\alpha(\mu_n,\mu)\geq \lambda \big[\bbE W_1(\mu_n,\mu)\big]^\alpha
 \gtrsim \lambda n^{-\alpha/d},
\end{align*}
where the final inequality follows from Proposition 2.1 of  \cite{dudley1969}, 
due to the absolute continuity of $\mu$ with respect to the Lebesgue measure on $\bbR^d$. 
Finally, since $h$ is continuous and $\calZ_0=\calX_0 - \calY_0$  is compact, 
$h$ is bounded over $\calZ_0$. Thus, by Lemma~\ref{lem:kantorovich_background}(ii), 
there exists a pair of Kantorovich potentials $(\varphi_n, \psi_n)$ such that
$\calT_c(\mu_n,\nu) = J_{\mu_n,\nu}(\varphi_n,\psi_n)$, whence
$$\calT_c(\mu_n,\nu_n) \geq J_{\mu_n,\nu_n}(\varphi_n,\psi_n) = \calT_c(\mu_n,\nu) + \int \psi_n d(\nu_n-\nu).$$
Since the random variables $X_1, \dots, X_n$ are independent of $Y_1, \dots, Y_n$, 
$\psi_n$ is also independent of $Y_1, \dots, Y_n$, whence  
$$\bbE \left[ \int \psi_n d(\nu_n-\nu) \bigg| X_1, \dots, X_n\right] = 0.$$
It readily follows that $\bbE \calT_c(\mu_n, \nu_n) \geq \bbE \calT_c(\mu_n,\nu)$, so that
$$\bbE \big|\calT_c(\mu_n,\nu_n) - \calT_c(\mu,\nu)\big|
 \geq \bbE \big[\calT_c(\mu_n,\nu_n) - \calT_c(\mu,\nu)\big]
 \geq \bbE \big[\calT_c(\mu_n,\nu) - \calT_c(\mu,\nu)\big] \gtrsim \lambda n^{-\alpha/d}.$$
 The claim follows.
\end{proof}

\subsection{Minimax Lower Bounds}
\label{sec:lower_bounds_minimax}
We next turn to deriving a minimax lower bound on the rate of estimating the optimal transport
cost between two probability measures. Unlike Proposition~\ref{prop:lb_empirical}, 
our next result will require both condition~\ref{assm:lb}
and the smoothness condition~\ref{assm:bded_smoothness} from Section~\ref{sec:upper_bounds_compact}. 
\begin{theorem} 
\label{thm:minimax}
 Assume conditions  \ref{assm:global}, \ref{assm:bded_smoothness} and \ref{assm:lb}. 
Then, there exists a constant $C > 0$ depending on $\lambda,\Lambda,d,\calX,\calY,\alpha$ such that
$$\inf_{\hat \calT_n}\sup_{\substack{\mu \in \calP(\calX) \\ \nu \in \calP(\calY)}} \bbE_{\mu,\nu}\big| \hat \calT_n - \calT_c(\mu,\nu)\big| \geq C (n \log n)^{-\alpha/d},$$
where the infimum is over all Borel-measurable functions $\hat \calT_n$ of $X_1, \dots, X_n$ and $Y_1, \dots, Y_n$. 
\end{theorem}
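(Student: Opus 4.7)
The plan is to apply Fano's method with a rich multi-hypothesis family, converting the non-degeneracy in condition~\ref{assm:lb} into a minimax lower bound. Fix $\nu_0=\calL|_{\calY_0}/\calL(\calY_0)$ with $\calY_0=B_{y_0,\epsilon}\subseteq\calY$ as in the proof of Proposition~\ref{prop:lb_empirical}, and perturb the uniform base $\mu_0=\calL|_{\calX_0}/\calL(\calX_0)$ on $\calX_0=B_{x_0,\epsilon}\subseteq\calX$, for which $T_0(x)=x+z_0$ is optimal and $\varphi_0(x)=-\langle\nabla h(z_0),x\rangle$ is the affine optimal potential. Tile a sub-cube $Q\subseteq\calX_0$ by $N\asymp h^{-d}$ disjoint cells $Q_1,\ldots,Q_N$ of side $h$ (to be calibrated of order $(n\log n)^{-1/d}$); fix a smooth bump $\eta$ on $[0,1]^d$ with $\int\eta=0$ and, when $\alpha>1$, $\int x\,\eta(x)\,dx=0$, and let $\eta_i(x)=\eta((x-c_i)/h)$ denote its rescaled copy in $Q_i$. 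For each $\omega\in\{0,1\}^N$, let $\mu_\omega$ have density $f_\omega=f_0+\delta\sum_i\omega_i\eta_i$, with $\delta$ small enough to ensure $f_\omega\geq 0$. The hypothesis set is $\{(\mu_\omega,\nu_0)\}_{\omega\in\Omega}$, where $\Omega\subseteq\{0,1\}^N$ is a Varshamov-Gilbert packing with $|\Omega|\geq 2^{N/8}$ and pairwise Hamming distance at least $N/8$.

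The argument hinges on two estimates. On the information side, disjointness of the supports gives the standard Taylor bound $\mathrm{KL}(\mu_\omega\|\mu_{\omega'})\lesssim \delta^2 h^d\,d_H(\omega,\omega')$, and hence $\mathrm{KL}(\mu_\omega^{\otimes n}\|\mu_{\omega'}^{\otimes n})\lesssim n\delta^2 h^d\,d_H(\omega,\omega')$. On the functional side I would establish the separation
\[
\big|\calT_c(\mu_\omega,\nu_0)-\calT_c(\mu_{\omega'},\nu_0)\big|\;\gtrsim\;\lambda\,\delta\,h^{d+\alpha}\,d_H(\omega,\omega'),
\]
as a consequence of condition~\ref{assm:lb}. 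The intuition is that each centered bump $\eta_i$ annihilates the first-order variation of $\calT_c$ against the affine base potential $\varphi_0$, so the signal lives at the order of the strict-convexity gap $h(z)-h(z_0)-\langle\nabla h(z_0),z-z_0\rangle\geq\lambda\|z-z_0\|^\alpha$: any coupling between $\mu_\omega$ and $\nu_0$ must redistribute the perturbation mass of order $\delta h^d$ inside each active cell over distances of order $h$, paying a surcharge of order $\lambda h^\alpha$ per unit mass moved. To make this rigorous I would insert a modified dual candidate $\varphi_\omega=\varphi_0+\delta\sum_i\omega_i\zeta_i$ into the Kantorovich dual~\eqref{eq:kantorovich}, with local test functions $\zeta_i$ supported on $Q_i$ engineered so that $(\varphi_\omega,\varphi_\omega^c)$ remains admissible while harvesting the $\lambda\|\cdot-z_0\|^\alpha$ gap on each active cell.

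Combining the two estimates, Fano's inequality yields
\[
\inf_{\hat{\calT}_n}\sup_{\omega\in\Omega}\bbE\,\big|\hat{\calT}_n-\calT_c(\mu_\omega,\nu_0)\big|\;\gtrsim\;\Delta\left(1-\frac{c_1\,n\delta^2 h^d N+\log 2}{c_2 N}\right),
\]
with minimum separation $\Delta\asymp\lambda\delta h^\alpha$ (using $d_H\asymp N\asymp h^{-d}$). Calibrating so that both the positivity constraint $\delta\lesssim f_0$ and the Fano budget $n\delta^2 h^d\lesssim 1$ are respected, and then optimizing $h$, I expect to choose $\delta$ of constant order and $h\asymp(n\log n)^{-1/d}$, producing $\Delta\gtrsim(n\log n)^{-\alpha/d}$. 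The logarithmic loss relative to the upper bound $n^{-\alpha/d}$ is the slack traded between positivity, the information budget, and the Fano log-cardinality term.

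The main obstacle is the functional-separation estimate. Because the bumps are deliberately centered in mass (and in first moment when $\alpha>1$)---a design forced by the requirement that $\mu_\omega$ be statistically indistinguishable from $\mu_0$ at the parametric rate $n^{-1/2}$ via sample averages---the transport-cost signal appears only at second order in the dual, and careful engineering is required to expose it. Condition~\ref{assm:lb}, namely the strict $\lambda\|z-z_0\|^\alpha$ lower bound on the centered expansion of $h$ at $z_0$, is precisely what makes such a lower bound feasible, but ensuring that the proposed perturbed potential $\varphi_\omega$ remains \emph{globally} admissible, not merely within the active cells, is the principal technical challenge.
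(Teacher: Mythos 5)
Your reduction to a Varshamov--Gilbert packing with Fano cannot work here, and the failure is exactly at the step you flag as the ``main obstacle.'' Fano's method for estimating a \emph{functional} requires the functional values $\calT_c(\mu_\omega,\nu_0)$ to be pairwise separated over the packing, and your claimed separation $|\calT_c(\mu_\omega,\nu_0)-\calT_c(\mu_{\omega'},\nu_0)|\gtrsim\lambda\,\delta\,h^{d+\alpha}\,d_{\mathrm{H}}(\omega,\omega')$ is false. Since $c(x,y)=h(x-y)$ is translation invariant, the cells $Q_i$ are translates of one another, and the bumps $\eta_i$ are translates of a single profile, the second-order surcharge contributed by activating a cell is (to leading order, up to lower-order interaction/boundary terms) the \emph{same} for every cell; both the primal upper bound you sketch and any dual lower bound built from local test functions $\zeta_i$ produce a correction of the form $\|\omega\|_1\cdot\kappa(h,\delta)$. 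Hence $\calT_c(\mu_\omega,\nu_0)-h(z_0)$ depends essentially only on the Hamming \emph{weight} of $\omega$, not on which cells are active: two codewords of equal weight with disjoint active sets have maximal Hamming distance but nearly identical transport costs, so the VG packing collapses onto a line of functional values and Fano yields nothing. Restricting to hypotheses with distinct weights does not rescue the argument: there are only $O(N)$ such values, separated by increments of order $\delta h^{d+\alpha}$, and balancing this against the $n$-sample information constraint gives a bound of parametric type, far weaker than $(n\log n)^{-\alpha/d}$. (Your use of~\ref{assm:lb} to kill the first-order term and lower bound deviations by $\lambda W_\alpha^\alpha$ is fine --- it mirrors Proposition~\ref{prop:lb_empirical} --- but it only separates $\mu_\omega$ from $\mu_0$, not the hypotheses from each other.)

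The paper's proof takes a genuinely different and necessary route, of the fuzzy-hypotheses/composite-testing type following \cite{niles-weed2019}: a uniformly random bijection $F$ embeds distributions $q$ on $[m]$, with $m\asymp n\log n$, into an $m^{-1/d}$-separated subset of $\calX_0$, setting $\mu=F_\# q$ and $\nu=(T_0\circ F)_\# u$; Proposition~\ref{prop:w-tv-chi} sandwiches $\calT_c(\mu,\nu)-h(z_0)$ between multiples of $m^{-\alpha/d}$ times total-variation and $\chi^2$ functionals of $q$; and the lower bound then follows from the impossibility (Proposition 10 of \cite{niles-weed2019}) of testing, with $n$ samples, whether $q$ is within $\TV$-distance $\delta$ of uniform or at distance at least $1/4$, when $m\asymp n\log n$. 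This also shows your reading of the logarithm as calibration ``slack'' is off: the $\log n$ is intrinsic to that composite testing problem (it is why one may take $m\asymp n\log n$ atoms), whereas your scheme, were the separation step valid, would purport to prove an $n^{-\alpha/d}$ lower bound by plain Fano, which this functional-estimation structure does not permit.
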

Theorem~\ref{thm:minimax} shows that the convergence
rates exhibited throughout this paper for the empirical optimal transport cost estimator
cannot be improved by any other estimator uniformly over all pairs
of measures in $\calP(\calX) \times \calP(\calY)$, up to a polylogarithmic factor. 
Minimax lower bounds scaling at the rate $(n\log n)^{-1/d}$ have previously
been established for the problem of estimating $p$-Wasserstein distances
by~\cite{niles-weed2019} (Theorem~11), and we build upon their techniques to prove Theorem~\ref{thm:minimax}.
The proof is deferred to Appendix~\ref{app:proof_minimax}.

\appendix



\section{Omitted Proofs from Section~\ref{sec:upper_bounds_compact}}
\label{app:proofs_compact}

\subsection{Proof of Corollary~\ref{cor:lp_norms}}
\label{app:lr_norms}
Throughout the proof, $C > 0$ denotes a constant depending only on $d,p,r$, 
whose value may change from line to line. 

The proof is elementary, but tedious. To prove the first claim, it suffices to show that 
$\norm\cdot_{\ell_r}^p \in \calC^{2\wedge r \wedge p}(B_{0,2})$. 
It is clear that $\norm\cdot_{\ell_r}$
and $\norm\cdot_{\ell_1}^p$ are Lipschitz for any $r,p \geq 1$, 
thus it suffices to assume $p,r > 1$. 
In this case,  
$\norm\cdot_{\ell_r}^p$ is differentiable,  
and for all $l=1,\dots, d$, 
\begin{align}
\label{eq:gradient_lr_norm}
\frac{\partial \norm x_{\ell_r}^p }{\partial x_l}
 =  p    x_l |x_l|^{r-2} \norm x^{p-r}_{\ell_r}.
\end{align}
Next, we show that $\frac{\partial \norm \cdot_{\ell_r}^p }{\partial x_l}$ is 
 H\"older continuous over $B_{0,2}$ 
with suitable exponent, uniformly in $l$. 
Let $x,y \in B_{0,2}$, and assume without loss of generality that $\norm x_{\ell_r} \leq \norm y_{\ell_r}$. Then,
\begin{align*}
\Big| x_l |x_l|^{r-2}& \norm x^{p-r}_{\ell_r} - y_l |y_l|^{r-2} \norm y^{p-r}_{\ell_r}\Big| \\
 &\leq  \left| x_l |x_l|^{r-2} \norm x^{p-r}_{\ell_r} - x_l |x_l|^{r-2} \norm y^{p-r}_{\ell_r}\right|+
    \left| x_l |x_l|^{r-2} \norm y^{p-r}_{\ell_r} - y_l |y_l|^{r-2} \norm y^{p-r}_{\ell_r}\right| \\
 &= |x_l|^{r-1} \left|\norm x^{p-r}_{\ell_r} -  \norm y^{p-r}_{\ell_r}\right|+
    \norm y^{p-r}_{\ell_r} \left| x_l |x_l|^{r-2}  - y_l |y_l|^{r-2} \right|.
\end{align*}
For the first term, notice that 
\begin{align*}
|x_l|^{r-1} \left|\norm x^{p-r}_{\ell_r} -  \norm y^{p-r}_{\ell_r}\right|
  &\leq \norm x_{\ell_r}^{r-1} \left(\norm y^{p-r}_{\ell_r} - \norm x^{p-r}_{\ell_r}\right) \\
 &\leq  \norm y^{p-1}_{\ell_r} - \norm x^{p-1}_{\ell_r} \leq C \norm {y -  x}^{1\wedge (p-1)}.
\end{align*}
Furthermore, letting $\epsilon_x = \text{sgn}(x_l)$ and $\epsilon_y = \text{sgn}(y_l)$, we have, 
\begin{align}
\label{eq:pf_lr_norm_step}
\nonumber
 \norm y^{p-r}_{\ell_r} & \left| x_l |x_l|^{r-2}  - y_l |y_l|^{r-2} \right|  \\
\nonumber
 &= \norm y^{p-r}_{\ell_r} \left| \epsilon_x |x_l|^{r-1}  - \epsilon_y|y_l|^{r-1} \right| \\
\nonumber
 &\leq  \norm y^{p-r}_{\ell_r}\left(\left|\epsilon_x |x_l|^{r-1} - \epsilon_x |y_l|^{r-1}\right| + \left|\epsilon_x|y_l|^{r-1} - \epsilon_y|y_l|^{r-1}
\right| \right) \\
\nonumber
 &\leq  \norm y^{p-r}_{\ell_r}\left(\left| |x_l|^{r-1} - |y_l|^{r-1}\right| + |y_l|^{r-1} \left|\epsilon_x- \epsilon_y\right| \right) \\
 &\leq  \norm y^{p-r}_{\ell_r}\left(\left| |x_l|^{r-1} - |y_l|^{r-1}\right| + 2|x_l-y_l|^{r-1}   \right).
 \end{align}
 We now consider several cases. If $p \geq r$, then we readily obtain
 \begin{align*}
 \norm y^{p-r}_{\ell_r} \left| x_l |x_l|^{r-2}  - y_l |y_l|^{r-2} \right| 
 \leq C \left( |x_l-y_l|^{1\wedge(r-1)} +2|x_l-y_l|^{r-1}\right) 
 \leq C   |x_l-y_l|^{1\wedge(r-1)}. 
 \end{align*}
If instead $p < r \leq 2$, then from equation~\eqref{eq:pf_lr_norm_step}, we obtain
 \begin{align*}
 \norm y^{p-r}_{\ell_r} \left| x_l |x_l|^{r-2}  - y_l |y_l|^{r-2} \right| 
 &\leq 3 \norm y^{p-r}_{\ell_r}|x_l-y_l|^{r-1}  \\
 &\leq  3\norm y^{p-r}_{\ell_r}(|x_l| + |y_l|)^{r-p} |x_l-y_l|^{p-1}\leq C|x_l-y_l|^{p-1}.
 \end{align*}
Finally, if $p < r$ and $r > 2$, we have from equation~\eqref{eq:pf_lr_norm_step},
 \begin{align*}
 \norm y^{p-r}_{\ell_r} \left| x_l |x_l|^{r-2}  - y_l |y_l|^{r-2} \right| 
 &\leq   \norm y^{p-r}_{\ell_r}\left( (r-1)(|x_l| \vee |y_l|)^{r-2} |x_l-y_l| + 2|x_l-y_l|^{r-1}   \right) \\
 &\leq  C \norm y^{p-r}_{\ell_r} ( |x_l| + |y_l|)^{r-2}  |x_l-y_l|  \\
 &\leq  C\norm y^{p-r}_{\ell_r}(|x_l| + |y_l|)^{r-p} |x_l-y_l|^{p-1}\leq C|x_l-y_l|^{p-1}.
 \end{align*}
%
The preceding displays readily imply that for all~$l$, 
$\partial \norm\cdot_{\ell_r}^p/\partial x_l \in \calC^{1 \wedge (r-1)\wedge (p-1) }(B_{0,2})$, 
implying that  $\norm\cdot_{\ell_r}^p \in \calC^{2\wedge r \wedge p}(B_{0,2})$. 
The first claim now follows by applying Theorem~\ref{thm:ub_general}.  To prove
the second claim, notice that when $r \geq 2$, 
the map in equation~\eqref{eq:gradient_lr_norm} is differentiable with respect to $x_l$ over any open subset of $B_{0,2}$
which does not contain the origin, with derivative 
\begin{align*}
\frac{\partial^2 \norm x_{\ell_r}^p}{\partial x_l^2}
 = p\frac{\partial }{\partial x_l}     x_l |x_l|^{r-2} \norm x^{p-r}_{\ell_r} 
 &= p  \norm x_{\ell_r}^{p-2r} |x_l|^{r-2} \left[(p-r)  |x_l|^{r}  +  (r-1) \norm x_{\ell_r}^{r}\right].
\end{align*}
Recall that for all positive semidefinite matrices $A \in \bbR^{d\times d}$, the 1-Schatten norm of $A$ is equal to its trace, 
so that
$\norm{A}_{\infty} \lesssim \mathrm{tr}(A)$. Thus, for any $\epsilon > 0$,
\begin{align*}
\sup_{x \in B_{0,2}\setminus B_{0,\epsilon}} \big\|\nabla^2 \norm x_{\ell_r}^p\big\|_{\infty} 
 \lesssim\sup_{x \in B_{0,2}\setminus B_{0,\epsilon}} \sum_{l=1}^d  \left|\frac{\partial^2 \norm{x}_{\ell_r}^p}{\partial x_l^2}\right|
 &\lesssim\sup_{x \in B_{0,2}\setminus B_{0,\epsilon}} \sum_{l=1}^d  \norm x_{\ell_r}^{p-r} |x_l|^{r-2}  < \infty.
\end{align*}
It readily follows that $\norm\cdot_{\ell_r}^p \in \calC^2(B_{0,2}\setminus B_{0,\epsilon}^\circ)$, with H\"older norm depending only on $d,r,p,\epsilon$. 
Now, since $\calX, \calY$ are convex by condition~\ref{assm:S1}, so is the set $\calZ = \calX - \calY$, 
and since $\calX$ and $\calY$ are closed and disjoint, there must 
exist $\epsilon > 0$ such that $B_{0,\epsilon} \cap \calZ = \emptyset$. 
Choose any convex open set $\calZ_1$ containing $\calZ$, and contained
in $B_{0,2} \setminus B_{0,\epsilon/2}^\circ$, to deduce that 
condition~\ref{assm:bded_smoothness} holds with $\alpha = 2$, and with $\Lambda$
depending only on $d,p,r,\calX, \calY$. The claim follows.
\qed 

\subsection{Proof of Lemma~\ref{lem:semi_concave}}
\label{app:semi_convex_compact}
The proof is analogous to that of Proposition~C.2 of~\citep{gangbo1996}, and is included for completeness.
We prove the claim for $\tilde\varphi_n$, noting that a symmetric argument can be used for the map
$\tilde\psi_n$. 
Define the modified cost function 
$$h_\Lambda : z \in \calZ \mapsto h(z) - \frac{\Lambda}{2}\norm z^2.$$
By condition~\ref{assm:bded_smoothness} with $\alpha=2$, $\nabla h$ is $\Lambda$-Lipschitz over $\calZ$,
implying that for all $z_1,z_2 \in \calZ$,
\begin{align*}
\langle \nabla h_\Lambda(z_1) - \nabla h_\Lambda(z_2), z_1-z_2\rangle
 = \langle \nabla h(z_1) - \nabla h(z_2), z_1-z_2\rangle - \Lambda \norm{z_1-z_2}^2 \leq 0.
\end{align*}
It follows that $-\nabla h_\Lambda$ is monotone, whence $h_\Lambda$ is concave
(\cite{hiriart-urruty2004}, Theorem 4.1.4).
Now, notice that for all $x \in \calX$,
\begin{align*}
\tilde\varphi_n(x)
 &= \inf_{y \in \calY} \left\{c(x,y) - \psi_n(y) \right\} - \frac{\Lambda}{2} \norm x^2  \\
 &= \inf_{y \in \calY} \left\{h(x-y) -\frac{\Lambda}{2} \Big[\norm{x-y}^2 - \norm y^2 + 2\langle x, y\rangle\Big] - \psi_n(y) \right\}\\
 &= \inf_{y \in \calY} \left\{h_\Lambda (x-y) +\frac{\Lambda}{2} \Big[\norm y^2 - 2\langle x, y\rangle\Big] - \psi_n(y) \right\}.
\end{align*}  
By concavity of $h_\Lambda$, the last line of the above display  
is an infimum of concave functions of~$x$. It follows that $\tilde \varphi_n$ is concave. 
To prove that $\tilde \varphi_n$ is Lipschitz, let $x \in \calX$
and let $(y_k) \subseteq \calY$ be a sequence such that 
$$\tilde\varphi_n(x) \geq c(x,y_k) - \psi_n(y_k) -\frac{\Lambda}{2}\norm x^2 - k^{-1}.$$
Then,  for all $x' \in \calX$ and $k\geq 1$,
\begin{align*}
\tilde\varphi_n(x') - \tilde\varphi_n(x)
 &\leq \left[c(x',y_k) - \psi_n(y_k) - \frac{\Lambda}{2}\|x'\|^2\right]- 
  	  \left[c(x,y_k) - \psi_n(y_k) -\frac{\Lambda}{2}\norm{x}^2 \right] + k^{-1}\\
 &= h(x'-y_k)   - h(x-y_k)  
  -\frac{\Lambda}{2}\left[\|x'\|^2-\|x\|^2 \right] + k^{-1}\\
 &\leq \left(\sup_{z \in \calZ} \norm{\nabla h(z)} \right) \|x'-x\|
  - \frac{\Lambda}{2}(\|x'\|-\|x\|)(\|x'\|+\|x\|)   + k^{-1} \\
 &\leq \left(\sup_{z \in \calZ} \norm{\nabla h(z)} +  \Lambda \right) \|x'-x\| + k^{-1}
 \leq 2\Lambda \|x'-x\| + k^{-1}.
\end{align*}  
Since $k$ is arbitrary, the Lipschitz property follows upon repeating a symmetric argument to upper bound $\tilde\varphi_n(x) - \tilde\varphi_n(x').$ 
Finally, since 
$|\varphi_n| \leq 1$, 
$|\tilde\varphi_n| \leq 2\Lambda$ 
as $\Lambda \geq 1$. 
\qed

\subsection{Proof of Lemma~\ref{lem:cost_convolution}}
\label{app:proof_lem_cost_conv}

To prove the first part, recall that condition~\ref{assm:bded_smoothness} implies
$h \in \calC^\alpha(\calZ_1)$ with $1 < \alpha < 2$, and $\Lambda \geq \norm h_{\calC^\alpha(\calZ_1)}$.
For any $z \in \calZ$, let $A_z:=\{u \in \bbR^d: z - u \in \calZ_1\}$. 
Since $\calZ$ is compact and $\calZ_1$ is open, there exists $\epsilon > 0$ such that for all $z \in \calZ$, $B_{0, \epsilon} \subseteq A_z$.
In particular, if $\sigma < \epsilon$, then $u \in A_z$ for all $z \in \calZ$ and $u$ in the support of $K_\sigma$.

Moreover, we have by a first-order Taylor expansion
that for all $z \in \calZ$ and $u \in A_z$,
$$h(z-u) - h(z) = - \langle \nabla h(z - t u), u\rangle, $$ 
for some $t \in (0,1)$. By convexity of $\calZ_1$, we have $z-tu \in \calZ_1$. It follows that
$$|h(z-u) - h(z) + \langle \nabla h(z), u\rangle|
\leq | \langle \nabla h(z) - \nabla h(z - t u), u\rangle |
\leq \Lambda \norm u^\alpha.$$ 

Finally, the fact that $K$ is even implies that $\int u K_\sigma(u) du = 0$.
Combining these facts, we obtain
\begin{align}
\label{eq:conv_decomp}
\nonumber
\left| h_\sigma(z) - h(z) \right| 
 &= \left| \int  \big[h(z-u) - h(z) \big] K_\sigma( u)du \right| \\
 &\leq \left| \int \big[h(z-u) - h(z) + \langle \nabla h(z), u \rangle \big] K_\sigma( u)du \right| + \left| \int  \langle \nabla h(z), u \rangle K_\sigma( u)du\right| \\
 & \leq \Lambda \int \norm u^\alpha K_\sigma(u) du 
 \leq \Lambda \sigma^{\alpha}\,,
\end{align}
since the support of $K_\sigma$ lies in $B_{0, \sigma}$.
This proves the first claim.

To prove the second part, it is easy to see that the cost $h_\sigma$ is convex, even, and lower semi-continuous by assumption on $h$, 
thus $h_\sigma$ satisfies assumption~\ref{assm:global}. 
Now, let $\tilde \calZ_1$ be an open set such that $\calZ \subseteq \tilde \calZ_1$ and such that
$\mathrm{cl}(\tilde \calZ_1) \subseteq \calZ_1$. After possibly decreasing the value of $\epsilon > 0$, 
we may again ensure that $B_{z,\epsilon} \subseteq \calZ_1$ for all $z \in \tilde \calZ_1$. 
We shall now prove that $h_\sigma$ satisfies assumption~\ref{assm:bded_smoothness}
with the H\"older norm $\norm{h_\sigma}_{\calC^2(\tilde \calZ_1)} \leq C \Lambda \sigma^{\alpha-2}$ as long as $\sigma < \epsilon$.
That $h_\sigma \leq 1$ on $\tilde \calZ_1$ is immediate, so it suffices to show that $h_\sigma$ has the requisite H\"older norm.

Define for any given $z \in \calZ$
and all $u \in \bbR^d$,
$$\tilde h(u) = h(u) - h(z) - \langle \nabla h(z), u-z\rangle, \quad \tilde h_\sigma = \tilde h\star K_\sigma.$$
As before, for any $z \in \tilde \calZ_1$
and any $u \in \bbR^d$ such that $\norm{u-z} \leq \epsilon$, 
we have $u \in \calZ_1$, whence a first-order Taylor expansion leads to
$$|\tilde h(u)| \leq \Lambda \norm{z-u}^{\alpha}.$$
%
We thus obtain for all $z \in \tilde \calZ_1$,
\begin{align*}
\|\nabla^2 h_\sigma(z)\|_\infty
 &= \|\nabla^2  \tilde h_\sigma(z)\|_\infty  \\
 &\leq \int |\tilde h(u)| \|\nabla^2  K_\sigma(z-u)\|_\infty du \\
 & = \sigma^{-d-2} \int |\tilde h(u)| \|\nabla^2 K((z-u)/\sigma)\|_\infty du \\
 & \leq \Lambda \sigma^{-d-2} \int \norm{z-u}^\alpha \|\nabla^2 K((z-u)/\sigma)\|_\infty du \\
 & = \Lambda \sigma^{\alpha -2} \int \norm u^\alpha \|\nabla^2 K(u)\|_\infty du 
  \leq C \Lambda\sigma^{\alpha - 2}\,,
\end{align*}
for some constant $C$ depending only on $K$.
This proves the second claim.\qed

\section{Omitted Proofs from Section~\ref{sec:upper_bounds_unbounded}}
\label{app:proofs_unbounded}
\subsection{Proof of Lemma~\ref{lem:semi_concave_general}}
\label{app:semi_concave_general} 
Under condition~\ref{assm:unbded_smoothness},
recall that for all $R > 0$,  $\norm h_{\calC^2(B_{0,R})} \leq \Lambda R^p$.
Set
$$R = \sup\{\norm{x-y}: x \in B_{0,r}, \  y \in \partial^c \varphi(B_{0,r})\},$$
and let $\Lambda_r = \Lambda R^p$.
It then follows by the same argument as in the proof of Lemma~\ref{lem:semi_concave} that the map
$$h_{\Lambda_r} : z\in B_{0,R} \mapsto   h(z)- \frac  { \Lambda_r} 2 \norm z^2$$
is concave.
Now, the assumptions on $c$ in Lemma~\ref{lem:kantorovich_background}(iv) are satisfied under conditions~\ref{assm:global}, \ref{assm:unbded_smoothness}, 
and under the assumption of superlinearity of $h$, 
thus the assumption of local boundedness on $\varphi$ ensures that
$\partial^c \varphi(x)$ is nonempty for all $x \in B_{0,r}$, and that $\varphi$ admits the representation
$$\varphi(x) = \inf_{y \in \partial^c \varphi(B_{0,r})} \Big\{c(x,y) - \varphi^c(y)\Big\}.$$
It follows that
\begin{align*}
\phi(x)
 &= \inf_{y \in \partial^c\varphi(B_{0,r})} \left\{c(x,y) - \varphi^c(y) \right\} - \frac{ \Lambda_r}{2} \norm x^2  \\
 &= \inf_{y \in \partial^c\varphi(B_{0,r})} \left\{h_{\Lambda_r} (x-y) +\frac{\Lambda_r}{2} \Big[\norm y^2 - 2\langle x, y\rangle\Big] - \varphi^c(y) \right\}.
\end{align*}  
Notice that $\norm{x-y} \leq R$ for all $x,y$ appearing in the 
infimum of the final
line in the above display, thus $h_{\Lambda_r}$ is 
defined and concave therein.
Similarly as in Lemma~\ref{lem:semi_concave}, the last line of the above display  
is thus  an infimum of concave functions of $x$, implying that $\phi$ is concave. 
To prove that $\phi$ is Lipschitz, let $x \in B_{0,r}$
and choose a sequence $(y_k) \subseteq \calY$ such that 
$$\phi(x) \geq c(x,y_k) - \varphi^c(y_k) -\frac{\Lambda_r}{2}\norm x^2 - k^{-1}.$$
Then,  for all $x' \in B_{0,r}$ and $k\geq 1$,
\begin{align*}
\phi(x') - \phi(x)
 &\leq \left[c(x',y_k) - \varphi^c(y_k) - \frac{\Lambda_r}{2}\|x'\|^2\right]- 
  	  \left[c(x,y_k) - \varphi^c (y_k) -\frac{\Lambda_r}{2}\norm{x}^2 \right] + k^{-1}\\
 &= h(x'-y_k)   - h(x-y_k)  
  -\frac{\Lambda_r}{2}\left[\|x'\|^2-\|x\|^2 \right] + k^{-1}\\
 &\leq \left(\sup_{z \in B_{0,R}} \norm{\nabla h(z)} \right) \|x'-x\|
  + \frac{\Lambda_r}{2}(\|x'\|-\|x\|)(\|x'\|+\|x\|)   + k^{-1} \\
 &\leq \left(\sup_{z \in B_{0,R}} \norm{\nabla h(z)} +  r\Lambda_r \right) \|x'-x\| + k^{-1}\\
 &\leq 2r{\Lambda_r} \|x'-x\| + k^{-1}.
\end{align*} 
The claim readily follows.
\qed

\subsection{Proof of Lemma~\ref{lem:extension_potentials}}
Part (i) is immediate by definition of $c$-conjugate.
For part~(ii), note that for any $x \in \supp(\mu_n)$, 
\begin{equation}
\label{eq:lem_extension_step}
\varphi_n(x) = \inf_{y \in \bbR^d}\Big\{c(x,y) - \eta_n(y) \Big\}
 \geq \inf_{y \in \bbR^d}\left\{ c(x,y) - \Big[ c(x,y) - f_n(x)\Big]\right\} = f_n(x).
\end{equation}
Similarly, for any $y \in \supp(\nu_n)$, since $(f_n, g_n) \in \Phi_c(\mu_n, \nu_n)$,
\begin{align*}
\psi_n(y) & = \inf_{x \in \bbR^d} \Big\{c(x, y) - \varphi_n(x)\Big\} \\
& \geq \inf_{x \in \bbR^d} \Big\{c(x, y) - \big[c(x, y) - \eta_n(y)\big] \Big\} \\
& = \eta_n(y) \\
& = \inf_{x \in \supp(\mu_n)} \Big\{c(x, y) - f_n(x)\Big\} \wedge \widebar R_n \\
& \geq g_n(y) \wedge \widebar R_n = g_n(y)\,,
\end{align*}
where the final equality uses that $g_n$ maps into $[0, \widebar R_n]$.
Since $\mu_n$ and $\nu_n$ are finitely supported, 
either of the above inequalities is strict if and only~if 
\begin{equation*}
\int \varphi_n d\mu_n + \int \psi_n d\nu_n > \int f_nd\mu_n + \int g_n d \nu_n= 
\calT_c(\mu_n,\nu_n),
\end{equation*} 
 in violation of the optimality of $(f_n, g_n)$.
 Therefore $\varphi_n$ and $\psi_n$ agree with $f_n$ and $g_n$ on $\supp(\mu_n)$ and $\supp(\nu_n)$, and 
 \begin{equation}
\label{eq:pf_optimality_extension}
\calT_c(\mu_n,\nu_n) = \int \varphi_n d\mu_n + \int \psi_n \nu_n.
\end{equation}

To prove part (iii), note that $\eta_n$ is nonnegative
over $\bbR^d$,
since $f_n$ is nonpositive over  $\supp(\mu_n)$. 
Therefore, for any $x \in \bbR^d$,  
$$\varphi_n(x) = 
\inf_{y \in \bbR^d} \Big\{ c(x,y) - \eta_n(y)\Big\} \leq h(0)  - \eta_n(x) \leq 0.$$
Furthermore, since $\eta_n$ is bounded above by $\widebar R_n$,
\begin{equation}
\label{eq:varphi_bound}
\varphi_n(x) \geq \inf_{y \in \bbR^d} \Big\{c(x,y) - \widebar R_n\Big\} \geq -\widebar R_n,
\end{equation}
Thus, $|\varphi_n(x)| \leq \widebar R_n$.
Similarly, since $\varphi_n$ is nonpositive, $\psi_n$ is nonnegative, and for all $y \in \bbR^d$,
$$\psi_n(y) = \inf_{x \in \bbR^d} \Big\{ c(x,y) - \varphi_n(x)\Big\} \leq h(0) - \varphi_n(y) \leq \widebar R_n,$$
where we used equation~\eqref{eq:varphi_bound}.
Thus, $|\psi_n(x)| \leq \widebar R_n$ as well.

Finally, to prove part (iv), equation~\eqref{eq:pf_optimality_extension} and the primal definition of $\calT_c(\mu_n,\nu_n) < \infty$ imply 
$$\int \big[c(x,y) - \varphi_n(x) - \psi_n(y)\big] d\pi_n(x,y) = 0.$$
By part (i), the integrand of the above display is nonnegative, thus 
$$c(x,y) = \varphi_n(x) + \psi_n(y), \quad \text{for all } (x,y) \in \supp(\pi_n).$$
Since $\varphi_n$ and $\psi_n$ are bounded by part (iii), 
it must then follow from Lemma~\ref{lem:kantorovich_background}(iv) 
that for any $(x,y)$ satisfying the above display, $(x,y) \in \partial^c \varphi_n(x)$
and $(y,x) \in \partial^c \psi_n(y).$  
\qed

\subsection{Proof of Lemma~\ref{lem:anticonc}}
\label{sec:proof_lem_anticonc}
Let $\calB$ denote the set of all balls in $\bbR^d$.
Recall that $\calB$ has Vapnik-Chervonenkis dimension $d+2$, 
thus the Vapnik-Chervonenkis inequality~\citep{vapnik1968} implies
that for all $u > 0$,
\begin{equation}
\label{eq:vc}
\bbP\left(\sup_{B \in \calB} |\mu_n(B)-\mu(B)|\geq u\right)\lesssim n^{d+2}\exp\left(-\frac{n u^2}{32}\right).
\end{equation} 
By the assumption of $(\gamma,b)$-super-Gaussianity,  
we have for all $\norm{y-x}\leq 2$,
$$\mu(B_y) \gtrsim 
\int_{B_y} \exp\big(-\norm u^2/(2\gamma^2)\big)du \gtrsim
 \exp\big(-\norm x^2/\gamma^2\big),$$
so that, for all $0 \leq j \leq J_n$, 
$$\inf_{x \in I_{j}}\inf_{\norm{x-y}\leq 2}\mu(B_y) \geq C_1 \exp(-\ell_j^2/\gamma^2).$$ 
Thus setting $u=C_1 \exp(-\ell_j^2/\gamma^2)/2$ in equation~\eqref{eq:vc} for all $0 \leq j \leq J_n$, and applying a union bound, leads to 
\begin{equation}
\label{eq:anticonc_hpb}
\bbP\left(A_n^c\right) 
  \lesssim n^{d+2} J_n\exp\left\{-\frac{C_1^2}{128} n  \exp(-2\ell_{J_n}^2/\gamma^2)\right\}
  = n^{d+2} J_n \exp\left\{-\frac{C_1^2\sqrt n}{128}\right\} \lesssim \frac 1 n.
\end{equation}
The claim follows.
\qed

\subsection{Proof of Proposition~\ref{prop:phi_bound_superdiff}}  
The proof proceeds using a similar argument as that of Proposition~C.4 of~\cite{gangbo1996}.
Under conditions~\ref{assm:global} and~\ref{assm:unbded_growth}, 
it follows from Lemma~\ref{lem:kantorovich_background}(iv)
that  $\partial^c \varphi(x)$ is nonempty for all $x \in B_{r/2}$. For any $y \in \partial^c \varphi(x)$, we have
$$\varphi(x)  = c(x, y) - \varphi^c(y).$$ 
Let $v = x - y$. If $\norm{v}\leq r$ there is nothing to prove, so assume otherwise, 
and define $\xi = 1-\frac r {2\norm{v}}$. Our assumption implies that $\xi \in [1/2,1]$. Furthermore, define
$$u = x + (\xi-1)v = x - \frac r 2 \left( \frac{v}{\norm {v}}\right).$$
Then, the penultimate display leads to
$$h(v) - h(\xi v) = c(x, y) - c(u, y) 
= c(x, y) -\varphi^c( y) - [c(u, y) - \varphi^c(y)]
\leq \varphi(x) - \varphi(u)\leq 2R.$$
This fact, together with the convexity and differentiability of $h$ away from zero, under condition~\ref{assm:unbded_growth}, implies
$$\frac r 2 \langle\nabla h(\xi v),  v/\norm{ v}\rangle \leq 2R.$$
On the other hand, by condition~\ref{assm:unbded_growth} we have $h(0)=0$, thus by convexity of $h$,
\begin{align*}
 \frac{ h(\xi v)}{\norm{\xi v}} \leq \left\langle \nabla h(\xi v), \frac{\xi v}{\norm{\xi v}}\right\rangle 
 \leq \frac{4R}{r}. 
\end{align*}
In particular, since $h(z)\gtrsim \kappa^{-1} \norm z^p$ for all $\norm z \geq 2$
under condition~\ref{assm:unbded_growth}, we have
$\norm{\xi  v}^{p-1}  \lesssim 4R/r,$
thus since $\xi\geq 1/2$ and $r \geq 1$, 
$\norm{v}^{p-1}  \lesssim R,$
and hence 
$$\norm{y}^{p-1} \lesssim \norm{ x}^{p-1} + R.$$
The claim follows.
\qed

 \subsection{Proof of Lemma~\ref{lem:covering_peeling}}
 \label{app:proof_peeling}
Let $\bar M_{jk} = M_j$ and $\bar U_{jk} = U_j$
for all $j \geq 0$ and $k=1, \dots, m_d$. 
 Fix an enumeration $D_1, D_2, \dots$ (resp. $\bar M_1, \bar M_2, \dots$
 and $\bar U_1, \bar U_2, \dots$) of the set $\{I_{jk}: j \geq 0, 1 \leq k \leq m_d\}$
 (resp. $(\bar M_{jk})$, $(\bar U_{jk})$). 
 Given a sequence $(a_j)_{j=1}^\infty$ of positive real numbers, let
 $p_j = N(\epsilon a_j, \calF_{\bar M_j, \bar U_j}(D_{j}), L^\infty)$  and let 
 $f_{j,1}, \dots, f_{j,p_j}$ be a $\epsilon a_j$-cover for $\calF_{\bar M_j,\bar U_j}(D_j)$
 in $L^\infty$. By Lemma~\ref{lem:bronshtein}, we have
 $$\log p_j \lesssim \left(\frac{\bar U_j+\diam(D_j)\bar M_j}{\epsilon a_j}\right)^{\frac d 2}.$$
 Now, it can be directly verified that the set
 $$\left\{\sum_{j=1}^\infty f_{j,k_j}:  k_j\in\{1, \dots, p_j\}, j\geq 0\right\}$$
 forms an $\epsilon \left(\sum_{j=1}^\infty a_j^2 \mu_n(D_j)\right)^{1/2}$-cover
 of $\calK_{M,U}$ in $L^2(\mu_n)$, which is of size $\prod_{j=1}^\infty p_j$. Thus,
 $$\log N\left(\epsilon\left[  \sum_{j=1}^\infty a_j^2 \mu_n(D_j)\right]^{1/2}, \calK_{M,U}, L^2(\mu_n)\right) 
 \leq \sum_{j=1}^\infty \log p_j  
    \lesssim \sum_{j=1}^\infty \left(\frac{\bar U_j+\diam(D_j)\bar M_j}{\epsilon a_j}\right)^{\frac d 2}.$$
    Now, set
$$a_j 
 = \big(\bar U_j+\diam(D_j)\bar M_j\big)^{\frac d {d+4}}\mu_n(D_j)^{-\frac {2}{d   + 4}}.$$ 
Then
$$\sum_{j=1}^\infty \left(\frac{\bar U_j+\diam(D_j)\bar M_j}{ a_j}\right)^{\frac d 2}
 = \sum_{j=1}^\infty \left(\bar U_j+\diam(D_j)\bar M_j\right)^{\frac{2d}{d+4}}\mu_n(D_j)^{\frac d {d+4}},$$
and,
\begin{align*}
\sum_{j=1}^\infty a_j^2 \mu_n(D_j) 
 &\leq \sum_{j=1}^\infty \left(\bar U_j+\diam(D_j)\bar M_j\right)^{\frac{2d}{d+4}}\mu_n(D_j)^{\frac d {d+4}}.
\end{align*}
 We deduce that for all $\epsilon > 0$,
\begin{align*}
\log N\left(\epsilon, \calK_{M,U}, L^2(\mu_n)\right) 
 &\lesssim \left(\sum_{j=1}^\infty a_j^2 \mu_n(D_j)\right)^{\frac d 4}\sum_{j=1}^\infty \left(\frac{\bar U_j+\diam(D_j)\bar M_j}{ \epsilon a_j}\right)^{\frac d 2}\\
 &\lesssim \left(\frac 1 \epsilon\right)^{\frac d 2} \left(  \sum_{j=1}^\infty \left(\bar U_j+\diam(D_j)\bar M_j\right)^{\frac{2d}{d+4}}\mu_n(D_j)^{\frac d {d+4}}\right)^{\frac {4+d}{4}}.
\end{align*}
The claim follows.
\qed 
 
\subsection{Proof of Lemma~\ref{lem:emp_proc_variance}}
To prove the claim, it suffices to show that  the variance
of the supremum of the empirical process is of the order $(\log n)^{2r_4}/n$. By \citet[Theorem 11.1]{boucheron2013}, it holds that
\begin{align*}
\Var\left[ \sup_{f \in \calK_{M,U}} \int f d(\mu_n-\mu)\right]
 &\leq \frac 1 {n^2} \sum_{i=1}^n  \bbE\left[  \sup_{f \in \calK_{M,U}} \big(f(X_i) - \bbE f(X_i)\big)^2\right] \\
 &\lesssim \frac 1 n \bbE\left[ \sup_{f \in \calK_{M,U}}f^2(X_1)\right] \\ 
 &= \frac 1 n \sum_{j=0}^\infty \int_{L_j} \left(\sup_{f \in \calK_{M,U}}f^2(x)\right) d\mu(x)\\
 &\leq \frac 1 n \sum_{j=0}^\infty U_j^2 \mu(L_j)\\ 
 &\lesssim \frac 1 n \sum_{j=0}^\infty (3^j \log n)^{2r_4} \exp(-c_1 3^{  j \beta}) \\
 &\lesssim \frac {(\log n)^{2r_4}} n.
\end{align*}
The claim readily follows.\qed 

%
%

\subsection{Proof of Lemma~\ref{lem:root_n_unbounded}}
\label{pf:root_n_unbounded}
We begin with $\bbE|\Gamma_n|$. 
Since the quantity $\int \varphi_0 d(\mu_n - \mu)$ remains unchanged if a constant is added to the map $\varphi_0$, 
there is no loss of generality in assuming $\varphi_0(0)=0$. By Theorem~\ref{thm:coupling_quantitative} and Lemma~\ref{lem:semi_concave_general} it must then follow that $\varphi_0(x) \lesssim  1 + \norm{x}^q$ for a sufficiently
large exponent $q\geq 1$, implying that
$$\bbE[\varphi_0(X)^2] \lesssim 1 + \bbE[\norm X^{2q}] \leq C,$$
where the final inequality holds because $\mu$ is $(\sigma,\beta)$-sub-Weibull, and thus admits $(2q)$-th moment
bounded above by a constant depending only on $q$, $\sigma$ and $\beta$, for all $q \geq 1$.
Therefore, by Markov's inequality, 
$$\bbE\left| \int \varphi_0 d(\mu_n-\mu)  \right|
 = \int_0^\infty \bbP\left(\left|\int \varphi_0 d(\mu_n-\mu)\right| \geq u\right)du
 \leq n^{-1/2} + \int_{n^{-\frac 1 2}}^\infty \frac{C}{nu^2}du \lesssim \frac 1 {\sqrt n}.$$
Applying a similar argument to $\psi_0$ leads to $\bbE | \Gamma_n| \lesssim n^{-1/2}$. 

Turning to $\calX_n$, notice that $|\xi_n(x)| \lesssim (\log n \norm x)^{q'}$ for all $x \in \bbR^d$, for
a sufficiently large constant $q' > 0$, thus it follows similarly as before that
$\bbE[\calX_n] \lesssim (\log n)^{q'} n^{-1/2} \lesssim n^{\epsilon - \frac 1 2 }$ for any $\epsilon > 0$. 
\qed 

\subsection{Proof of Corollary~\ref{cor:norm_p_unbounded}}
\label{app:pf_norm_p_unbounded}
The claim is straightforward when $p \geq 2$.
To prove the claim when $p \in (1,2)$, abbreviate $h_p(x) = \norm x^p$, and for all $\epsilon \in [0,1]$ define the cost 
$$h_{p,\epsilon}(x) = \Big( \norm x^2 + \epsilon^{\frac 2 p}\Big)^{\frac p 2} - \epsilon.$$
\begin{lemma}
\label{lem:approximation_unbded}
We have for all $p \in (1,2)$ and all $\epsilon\in[0,1]$, 
\begin{enumerate}
\item $\norm{h_{p,\epsilon} - h_p}_{L^\infty} \leq 2\epsilon.$
\item $h_{p,\epsilon}$ satisfies condition~\ref{assm:unbded_smoothness}
with $\norm h_{\calC^2(B_{0,r})} \leq \Lambda_\epsilon r^p$ for all $r \geq 1$, where $\Lambda_\epsilon = c_1\epsilon^{1-\frac 2 p}$
for a universal constant $c_1 > 0$. Furthermore, $h_{p,\epsilon}$ satisfies 
condition~\ref{assm:unbded_growth} with $\kappa = 2^{\frac p 2 -1}p$. 
\end{enumerate}
\end{lemma}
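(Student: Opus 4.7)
The plan is to verify both claims by direct calculation, leveraging two elementary facts: the concavity of $t \mapsto t^{p/2}$ on $[0,\infty)$ (since $p/2 \in (1/2,1)$ for $p \in (1,2)$) for part~(1), and explicit computation of the first and second derivatives of $h_{p,\epsilon}$ for part~(2).

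For part~(1), I would use the subadditivity of $t \mapsto t^{p/2}$ (a consequence of concavity together with $0^{p/2} = 0$) to obtain $(\|x\|^2+\epsilon^{2/p})^{p/2} \leq \|x\|^p + (\epsilon^{2/p})^{p/2} = \|x\|^p + \epsilon$, which gives $h_{p,\epsilon}(x) \leq h_p(x)$. Combined with the trivial lower bound $(\|x\|^2+\epsilon^{2/p})^{p/2} \geq \|x\|^p$, this immediately yields $|h_{p,\epsilon}(x) - h_p(x)| \leq \epsilon \leq 2\epsilon$.

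For the smoothness half of part~(2), I would compute the gradient $\nabla h_{p,\epsilon}(x) = p(\|x\|^2+\epsilon^{2/p})^{p/2-1}x$ and Hessian $\nabla^2 h_{p,\epsilon}(x) = p(\|x\|^2+\epsilon^{2/p})^{p/2-1} I + p(p-2)(\|x\|^2+\epsilon^{2/p})^{p/2-2} xx^\top$. The key observation is that $p/2-1 < 0$, so $(\|x\|^2+\epsilon^{2/p})^{p/2-1} \leq (\epsilon^{2/p})^{p/2-1} = \epsilon^{1-2/p}$ uniformly in $x$; writing $(\|x\|^2+\epsilon^{2/p})^{p/2-2}\|x\|^2 \leq (\|x\|^2+\epsilon^{2/p})^{p/2-1}$ handles the rank-one term in the Hessian, so $\|\nabla^2 h_{p,\epsilon}(x)\|_{\mathrm{op}}$ is uniformly controlled by a constant multiple of $\epsilon^{1-2/p}$. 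The function and gradient themselves are easily bounded by constant multiples of $r^p$ and $r^{p-1}$ on $B_{0,r}$ for $r \geq 1$, and since $\epsilon^{1-2/p} \geq 1$ for $\epsilon \in (0,1]$ and $p \in (1,2)$, all terms in the $\calC^2$ norm are dominated by $c_1 \epsilon^{1-2/p} r^p$, establishing condition~\ref{assm:unbded_smoothness}.

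For the growth half of part~(2), I would set $\omega(r) = (r^2+\epsilon^{2/p})^{p/2} - \epsilon$ and verify that $\omega$ is differentiable with $h_{p,\epsilon}(z) = \omega(\|z\|)$ and $h_{p,\epsilon}(0) = 0$; convexity of $\omega$ can be checked directly from the identity $\omega''(r) = p(r^2+\epsilon^{2/p})^{p/2-2}\bigl[(p-1)r^2 + \epsilon^{2/p}\bigr] > 0$. From $\omega'(r) = pr(r^2+\epsilon^{2/p})^{p/2-1}$ together with the sandwich $r^2 \leq r^2+\epsilon^{2/p} \leq 2r^2$ for $r > 1$ and $\epsilon \in [0,1]$ (once more using $p/2-1<0$ to flip the inequality when raising to that power), I would extract $p \cdot 2^{p/2-1} r^{p-1} \leq \omega'(r) \leq p\, r^{p-1}$, which is the required two-sided bound in condition~\ref{assm:unbded_growth} with $\kappa$ a constant depending only on $p$. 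No step poses a genuine obstacle; the only care needed throughout is tracking the sign of $p/2-1$ to correctly invert monotonicity when raising to that power, which is precisely where the dependence $\Lambda_\epsilon \asymp \epsilon^{1-2/p}$ enters.
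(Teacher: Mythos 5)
Your proposal is correct and follows essentially the same route as the paper: direct computation of the gradient and Hessian with the uniform bound $(\|x\|^2+\epsilon^{2/p})^{p/2-1}\leq \epsilon^{1-2/p}$ for the $\calC^2$ estimate, and the sandwich $r^2\leq r^2+\epsilon^{2/p}\leq 2r^2$ (with the sign flip from the negative exponent) for the two-sided bound on $\omega'$. Your part~(1) via subadditivity of $t\mapsto t^{p/2}$ is a minor variant of the paper's triangle-inequality bound and in fact gives the slightly sharper constant $\epsilon$ rather than $2\epsilon$.
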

Lemma~\ref{lem:approximation_unbded}(i) implies
\begin{align*}
\bbE \big|\calT_{h_p}(\mu_n,\nu_n) - \calT_{h_p}(\mu,\nu)\big|
 &\leq \bbE \big|\calT_{h_{p,\epsilon}}(\mu_n,\nu_n) - \calT_{h_{p,\epsilon}}(\mu,\nu)\big| + 4\epsilon,
\end{align*}
which together with Lemma~\ref{lem:approximation_unbded}(ii) and
Theorem~\ref{thm:main_unbounded}
imply
\begin{equation*}
\bbE \big|\calT_{h_p}(\mu_n,\nu_n) - \calT_{h_p}(\mu,\nu)\big|
 \lesssim  \epsilon^{1-\frac 2 p} n^{-\frac 2 d} + \epsilon.
\end{equation*}
The right-hand side is minimized by choosing $\epsilon \asymp n^{-p/d}$, leading to the claim. It thus remains to prove
Lemma~\ref{lem:approximation_unbded}. 

\subsubsection{Proof of Lemma~\ref{lem:approximation_unbded}}
Notice  that for all $x \in \bbR^d$,
\begin{align*}
\left| h_{p,\epsilon}(x) - h_p (x)\right|
  = \left| \Big( \norm x^2 + \epsilon^{\frac 2 p}\Big)^{\frac p 2} - \epsilon - \norm x^p  \right|
  \leq  \Big( \norm x^2 + \epsilon^{\frac 2 p}\Big)^{\frac p 2} - \norm x^p 
 +   \epsilon  \leq  2\epsilon,
 \end{align*}
thus part (i) follows. To prove part (ii), choose the function
$\omega(z) = (z^2 + \epsilon^{2/p})^{p/2}  - \epsilon$. We have 
$h(0)= 0$, and for all $z >1$, 
$$\omega'(z) = p (z^2 + \epsilon^{2/p})^{\frac p 2 -1} z,$$
so that 
$h_{p,\epsilon}$   satisfies condition~\ref{assm:unbded_growth} with 
$\kappa = 2^{1-\frac p 2}p$. 
It remains to prove the H\"older estimate. 
Clearly, $h_{p,\epsilon} \in \calC^2_{\mathrm{loc}}(\bbR^d)$ for all
$\epsilon > 0$, and
$$\nabla^2 h_{p,\epsilon}(x) = p (p-2)   \Big( \norm x^2 + \epsilon^{\frac 2 p}\Big)^{\frac p 2 -2} xx^\top
 +  p \Big( \norm x^2 + \epsilon^{\frac 2 p}\Big)^{\frac p 2 -1} I_d.$$
Therefore, 
\begin{align*}
\norm{\nabla^2 h_{p,\epsilon}(x)}_{\text{op}} 
 &\lesssim  \Big( \norm x^2 + \epsilon^{\frac 2 p}\Big)^{\frac p 2 -2} \norm x^2
 +    \Big( \norm x^2 + \epsilon^{\frac 2 p}\Big)^{\frac p 2 -1}  
 \lesssim \begin{cases}
 	\epsilon^{1-\frac 2 p} , & \norm x \leq \epsilon^{\frac 1 p}, \\
 	\norm x^{p-2}, & \norm x > \epsilon^{\frac 1 p}.
 \end{cases}
\end{align*} 
We thus easily deduce that for all $r \geq 1$,
$$\norm{h_\epsilon}_{\calC^2(B_{0,r})} 
\lesssim r^p + \epsilon^{1-\frac 2 p} \leq\epsilon^{1-\frac 2 p} r^p,$$
and the claim follows. 
\qed

\subsection{On the Super-Gaussianity Assumption}
\label{app:super_gaussian}
We close this Appendix with a simple characterization of super-Gaussianity  which
was stated in Section~\ref{sec:upper_bounds_unbounded}. 
Recall that we say a measure $\mu$ is $(\gamma,b)$-super-Gaussian if  
$\mu(B_x) \geq b \cdot \bbP(Z \in B_x)$ for any $x \in \bbR^d$, where $Z \sim N(0,\gamma^2)$. 
Furthermore, we say that $\mu$ admits a $(\gamma_1,\gamma_2)$-regular density~\citep{polyanskiy2016}
for some $\gamma_1,\gamma_2 > 0$ 
if $\mu$ admits a density $f$ with respect to the Lebesgue measure such that 
$\log f$ is differentiable and satisfies
$$\norm{\nabla \log f(x)} \leq \gamma_1 \norm x + \gamma_2,\quad
\text{for all } x \in \bbR^d.$$
\begin{lemma}
\label{lem:c1c2_reg}
Assume $\mu$ admits a $(\gamma_1,\gamma_2)$-regular density. Then, there exist
constants $\sigma,b > 0$ such that $\mu$ is $(\sigma,b)$-super-Gaussian.
\end{lemma}

\subsubsection{Proof of Lemma~\ref{lem:c1c2_reg}}
\label{app:proof_c1c2_reg}
By a first-order Taylor expansion, we have for all $x \in \bbR^d$,
$$|\log f(x)  -\log f(0)| = | \nabla \log f(\tilde x)^\top x|,$$
for some $\|\tilde x\| \leq \norm x$. Therefore, 
$$|\log f(x)  -\log f(0)| \leq \|\nabla \log f(\tilde x)\| \|x\|\leq \gamma_1 \norm x^2 + \gamma_2 \norm x,$$
which entails
$$f(x) \geq \exp\Big\{ \log f(0) - \gamma_1 \norm x^2 - \gamma_2 \norm x\Big\}
 = f(0)\exp\Big\{ - \gamma_1 \norm x^2 - \gamma_2 \norm x\Big\}.$$
If $\norm x \geq \gamma_2$, the above display is bounded below by
$f(0)\exp(-(1+\gamma_1)\norm x^2)$, while if $\norm x \leq \gamma_2$, 
it is bounded below by $f(0)\exp(-\gamma_1\norm x^2 + \gamma_2^2)$. 
In either case, $f$ is bounded below by a constant multiple of
the $N(0,\gamma_1^{-1})$ density, thus the claim readily follows. \qed

\section{Omitted Proofs from Section~\ref{sec:lower_bounds}}
\label{app:proof_minimax}
\subsection{Proof of Theorem~\ref{thm:minimax}}
Throughout this section, we respectively denote the $\chi^2$-divergence and the Total Variation distance between
two probability measures $P \ll Q$ by 
$$\chi^2(P, Q) = \int\left(\frac{dP}{dQ}-1\right)^2 dQ, \quad 
\TV(P,Q) = \frac 1 2 \int \left|\frac {dP}{dQ} - 1\right| dQ.$$
Furthermore, similarly as in the proof of Proposition~\ref{prop:lb_empirical}, we write
$T_0(z) = z+z_0$, where $z_0$ is defined in condition~\ref{assm:lb}.

Our proof of Theorem~\ref{thm:minimax} follows similarly as  that of \cite{niles-weed2019}, Theorem 11, which establishes a minimax
lower bound for estimating $p$-Wasserstein distances.
Our key extension of their proof technique is contained in the following result. 
\begin{proposition}
\label{prop:w-tv-chi}
Assume the same conditions as Theorem \ref{thm:minimax}. 
Given an integer $m \geq 1$, let $u$ be the uniform distribution on $[m]$. Then, there exist 
universal constants $C_1, C_2> 0$, 
a constant $C_{\lambda,\Lambda} > 0$ depending on $\lambda,\Lambda,\alpha$, and a random
function $F:[m] \to \calX$, such that for any distribution $q$ on $[m]$, we have
\[
\begin{multlined}[\linewidth]
C_1 \lambda m^{-\alpha/d}\TV(q, u) - C_{\lambda,\Lambda} \sqrt{\frac{\chi^2(q,u)}{m}}   \leq \calT_c\big(F_\# q, (T_0 \circ F)_\# u\big) - h(z_0)  
 \\
\leq 
C_2 \Lambda m^{-\alpha/d} \big(\chi^2(q, u)\big)^{\frac \alpha d} \TV(q, u)^{1 - \frac {2\alpha} d}
+ C_{\lambda,\Lambda} \sqrt{\frac{\chi^2(q,u)}{m}},
\end{multlined} 
\]
with probability at least $.9$.
\end{proposition}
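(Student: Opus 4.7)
The plan is to embed $[m]$ into $\calX$ via a random permutation of a deterministic well-separated grid, then read both inequalities off a Taylor expansion of $h$ around $-z_0$ combined with a sharp discrete $\alpha$-transport estimate. By condition~\ref{assm:lb}, fix $r_0>0$ with $B_{x_0,r_0}\subseteq\calX$ and $B_{y_0,r_0}\subseteq\calY$, pack a deterministic grid $c_1,\ldots,c_m\in B_{x_0,r_0/2}$ of pairwise separation at least $c_0 m^{-1/d}$, and define $F(i)=c_{\sigma(i)}$ for $\sigma$ a uniformly random permutation of $[m]$; then $T_0\circ F$ maps into $B_{y_0,r_0/2}\subseteq\calY$.

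For any coupling $\pi\in\Pi(F_\#q,(T_0\circ F)_\# u)$, using $h(-z_0)=h(z_0)$ by evenness and expanding $h$ at $-z_0$ in the direction $w_{ij}:=F(i)-F(j)\in B_{0,2r_0}$, conditions~\ref{assm:bded_smoothness} and~\ref{assm:lb} yield
\[
\langle\nabla h(-z_0),w_{ij}\rangle + \lambda\|w_{ij}\|^\alpha \;\leq\; h(-z_0+w_{ij})-h(z_0) \;\leq\; \langle\nabla h(-z_0),w_{ij}\rangle + \Lambda\|w_{ij}\|^\alpha,
\]
with the linear term absent when $\alpha\leq 1$. Summing against the marginal constraints on $\pi$ collapses the linear contribution to the coupling-independent quantity $L:=\langle\nabla h(-z_0),\,\sum_i(q(i)-1/m)F(i)\rangle$, so optimizing the residual over $\pi$ and identifying it as a standard discrete $\alpha$-Wasserstein problem gives
\[
L + \lambda\,W_\alpha^\alpha(F_\#q,F_\#u)\;\leq\;\calT_c(F_\#q,(T_0\circ F)_\# u)-h(z_0)\;\leq\; L + \Lambda\,W_\alpha^\alpha(F_\#q,F_\#u).
\]

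Since $\sum_i(q(i)-1/m)=0$ and $\sigma$ is uniformly random, an elementary computation gives $\bbE_\sigma[L]=0$ and $\mathrm{Var}_\sigma(L)\lesssim \|\nabla h(-z_0)\|^2 r_0^2\,\chi^2(q,u)/m$, so by Chebyshev $|L|\leq C_{\lambda,\Lambda}\sqrt{\chi^2(q,u)/m}$ on an event of probability at least $0.95$. The lower bound $W_\alpha^\alpha(F_\#q,F_\#u)\geq c_0^\alpha m^{-\alpha/d}\TV(q,u)$ is then immediate: the atoms of both measures lie in the $c_0 m^{-1/d}$-separated set $\{c_j\}$, so off-diagonal mass pays at least $(c_0 m^{-1/d})^\alpha$ per unit, and any coupling of $F_\#q$ with $F_\#u$ must move at least $\TV(q,u)$ mass off the diagonal. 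Combined, this delivers the lower half of the proposition.

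The main technical step is the matching upper bound on $W_\alpha^\alpha$. Let $S=\{i:q(i)\neq 1/m\}$ and $k=|S|$; Cauchy--Schwarz applied to $\|q-u\|_1=2\TV$ and $\|q-u\|_2^2=\chi^2/m$ gives $k\geq 2m\TV^2/\chi^2$. Under the random permutation, $\sigma(S)$ is a uniformly random $k$-subset of the grid, and partitioning the grid into $\sim k$ sub-cells of $\sim m/k$ points each (diameter $\lesssim k^{-1/d}$), a balls-in-bins concentration argument shows that with probability $\geq 0.95$ a cell-by-cell matching routes $\sigma^+$-mass to $\sigma^-$-mass at per-unit displacement $O(k^{-1/d})$. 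This yields $W_\alpha^\alpha(F_\#q,F_\#u)\lesssim \TV\cdot k^{-\alpha/d}\lesssim m^{-\alpha/d}(\chi^2)^{\alpha/d}\TV^{1-2\alpha/d}$, and a union bound over the two $0.95$-events completes the proof. The main obstacle is precisely this combinatorial routing step: balancing positive and negative imbalances across randomly chosen cells in a way that works uniformly in $q$ and achieves the correct interpolation exponent; the random permutation is essential here, and a careful dyadic argument will likely be required to handle the full range of $(\TV,\chi^2)$.
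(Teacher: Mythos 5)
Your construction and the first two-thirds of your argument coincide with the paper's proof: the same uniformly random bijection onto an $m^{-1/d}$-separated packing of a ball around $x_0$, the same Taylor sandwich from \ref{assm:lb} and \ref{assm:bded_smoothness} whose linear term is coupling-independent, has mean zero over the random embedding and variance $O(\chi^2(q,u)/m)$ (then Chebyshev), and the same separation argument giving $\calT_c(F_\# q,(T_0\circ F)_\# u)-h(z_0)\geq \Gamma_m+\lambda\, c\, m^{-\alpha/d}\TV(q,u)$. Where you diverge is the upper bound on $W_\alpha^\alpha(F_\# q,F_\# u)$: the paper does not prove it, but invokes Proposition 9 of \cite{niles-weed2019}, which bounds $\bbE_F W_\alpha^\alpha(F_\# q,F_\# u)\lesssim m^{-\alpha/d}(\chi^2(q,u))^{\alpha/d}\TV(q,u)^{1-2\alpha/d}$, and converts this to a high-probability statement by Markov's inequality.

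Your replacement for that step---which you yourself flag as the main obstacle---has a genuine gap: the intermediate claim $W_\alpha^\alpha(F_\# q,F_\# u)\lesssim \TV(q,u)\,k^{-\alpha/d}$ with $k=|\{i:q(i)\neq 1/m\}|$ is false. Take $q$ that adds excess mass $t=\TV(q,u)$ to a single atom and removes mass $1/m$ from $mt$ other atoms, so $k\asymp mt$. Each deficit atom can absorb at most $1/m$, and under the random permutation the deficit atoms are spread uniformly over the ball, so with probability at least $0.9$ the atoms within a small constant distance of the excess atom can absorb only a small fraction of $t$; hence a constant fraction of the excess mass travels a constant distance and $W_\alpha^\alpha(F_\# q,F_\# u)\gtrsim t$, whereas your claim predicts $t\,(mt)^{-\alpha/d}\ll t$. (Taking $t\asymp m^{-1/2}$ even keeps $\chi^2(q,u)$ bounded, and the cited bound $m^{-\alpha/d}(\chi^2)^{\alpha/d}t^{1-2\alpha/d}\asymp t$ remains consistent with the truth, because here $\chi^2\asymp mt^2+t$.) The failure is structural: within a cell of your partition the positive and negative parts of $q-u$ need not balance even approximately---in the example the cell containing the excess atom has local deficit only about $t/k$---so no cell-by-cell matching at per-unit displacement $O(k^{-1/d})$ exists, and no bound depending on $\TV$ and $k$ alone can interpolate correctly. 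A correct self-contained argument must be multiscale and control the dyadic-cell discrepancies $|F_\#(q-u)(Q)|$ through the second moment of $q-u$, i.e.\ through $\chi^2(q,u)$; that is exactly the content of the cited Proposition 9, which you would need to reproduce (in expectation, then Markov) to close the proof.
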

\begin{proof}
We prove the claim for $\alpha \in (1,2]$. An analogous
argument may be used to prove the claim when $\alpha \in (0,1]$. 
Recall the notation of condition~\ref{assm:lb}.
Similarly as in the proof of Proposition~\ref{prop:lb_empirical},
there exists $\gamma > 0$ such that $\calX_0 = B_{x_0,\gamma} \subseteq \calX$ and such that $\calY_0 = T_0(\calX_0) \subseteq \calY$, 
where $T_0(z) = z + z_0$. 
%
%
%
Now, it is a straightforward observation that $N(\epsilon, \calX_0, \norm\cdot ) \geq c' \epsilon^{-d}$,
for all $\epsilon \in (0,1)$ and for a constant $c' > 0$ depending only on $d,\gamma$, 
which implies that the $\epsilon$-packing number of $\calX$ under $\norm\cdot$ is also greater than
$c'\epsilon^{-d}$~(\cite{wainwright2019}, Lemma 5.5). 
Therefore, there exists a set $\calG_m = \{x_1, \dots, x_m\} \subseteq \calX_0$ 
such that $\norm{x_i - x_j} \gtrsim m^{-1/d}$ for all $i \neq j$. 
We let $F$ be selected uniformly at random from the set of bijections $S_m$
from $[m]=\{1,\dots,m\}$ to $\calG_m$.

We begin by proving the lower bound. 
Let $\tilde \pi$ denote an optimal coupling between $F_\# q$ and $(T_{0} \circ F)_\# u$,
and let $ \pi = (Id, T_0^{-1})_\# \tilde \pi \in \Pi(F_\# q,F_\# u)$. 
We then have,
\begin{align}
\label{eq:pf_minimax_taylor}
\nonumber
\calT_c(F_\# q, (T_0 \circ F)_\# u) - h(z_0)
 &= \int\Big[h(y-x) - h(z_0)\Big] d\tilde \pi(x, y) \\
\nonumber
 &= \int \Big[ h(y-x+ z_0) - h(z_0)\Big] d\pi(x, y) \\
 &\geq \int \langle \nabla h(z_0), y-x \rangle d \pi(x,y) +  \lambda  \int \norm{x-y}^\alpha d\pi(x,y),
 \end{align}
by condition~\ref{assm:lb}. 
We next bound the term $\Gamma_m = \int \langle \nabla h(z_0), y-x \rangle d \pi(x,y)$.
Notice that \begin{align*}
\bbE_F [\Gamma_m]
 &= \left\langle \nabla h(z_0), \frac 1 {m!} \sum_{G\in S_m} \sum_{j=1}^m (u(j) - q(j)) G(j)\right\rangle \\
 &= \left\langle \nabla h(z_0), \sum_{j=1}^m (u(j) - q(j)) \left(\frac 1 {m!}\sum_{G\in S_m} G(j)\right)\right\rangle.
\end{align*}
The quantity $\frac 1 {m!}\sum_{G\in S_m} G(j)$ takes on the same
value for all $j=1, \dots, m$, thus 
we deduce from the above display that $\bbE_F[\Gamma_m] = 0$. 
Similarly, notice that for any $1 \leq j\neq k \leq m$, 
\begin{align*}
\bbE_F\Big[ \langle \nabla h(z_0), F(j)\rangle \langle  \nabla h(z_0), F(k)\rangle\Big] &=
 \frac 1 {m!} \sum_{x \in \calG_m} \sum_{\substack{y \in \calG_m \\ y\neq x}}
   \sum_{\substack{G \in S_m\\ G(j) = x\\ G(k)=y}} \langle \nabla h(z_0), x\rangle \langle \nabla h(z_0),y\rangle   \\
   &= \frac 1 {m(m-1)}
 \sum_{x\neq y} \langle \nabla h(z_0), x\rangle\langle \nabla h(z_0), y\rangle =:M_{1,1},
 \end{align*}
which is again constant in $j,k$. It follows that 
\begin{align*}
\bbE_F &\left[ \sum_{j\neq k}  \langle \nabla h(z_0), F(j)\rangle\langle \nabla h(z_0), F(k)\rangle(q(j)-u(j))(q(k)-u(k))\right] \\
 &= M_{1,1} \sum_{j\neq k}  (q(j)-u(j)) (q(k) - u(k)) 
 = -M_{1,1} \sum_{j=1}^m \left(q(j) - \frac 1 m \right)^2 = -\frac{M_{1,1}}{m} \chi^2(q,u),
\end{align*}
whence, letting $M_2 := \bbE_F \Big[ \langle \nabla h(z_0), F(j)\rangle^2\Big]$, which itself is again constant
in $j$, we obtain 
\begin{align*}
\Var_F[\Gamma_n]
 &{=} \bbE_F\left[ \left(\sum_{j=1}^m \langle \nabla h(z_0), F(j)\rangle(q(j) - u(j))\right)^2\right] \\
 &= \sum_{j=1}^m \bbE_F \Big[ \langle \nabla h(z_0), F(j)\rangle^2\Big] (q(j) - u(j))^2 -\frac{M_{1,1}}{m} \chi^2(q,u)
 = \frac{M_2-M_{1,1}}{m}\chi^2(q,u).
\end{align*}
Therefore, by Markov's inequality, there exists a constant $C_{\lambda,\Lambda} > 0$ depending only on $M_2$, and hence
only on $\lambda,\Lambda, \alpha$, such that
\begin{equation}
\label{eq:pf_minimax_Gamma_n_Markov}
\bbP\left(|\Gamma_n| \geq C_{\lambda,\Lambda}\sqrt{\frac{\chi^2(q,u)}{m}}\right) \leq .025.
\end{equation}
%
%
Thus, returning to equation~\eqref{eq:pf_minimax_taylor}, and
recalling that for all $x, y \in \calG_m$, $\norm{x - y} \gtrsim m^{-1/d} I(x \neq y)$,
we deduce that for some $C_1 > 0$,  with probability at least $.975$,
\begin{align}
\label{eq:lower_bound_highprob}
\nonumber
\calT_c(F_\# q, (T_0 \circ F)_\# u) - h(z_0)
  &\geq C_1 \lambda m^{-\frac \alpha d} \bbP_{\pi}(X \neq Y)+\Gamma_m \\
\nonumber
  &\geq C_1\lambda m^{-\frac \alpha d} \TV(F_\# q,F_\# u) +    \Gamma_m \\
  &\geq C_1\lambda m^{-\frac \alpha d} \TV(q,u) - C_{\lambda,\Lambda}\sqrt{\frac{\chi^2(q,u)}{m}}.
\end{align}
We now prove the upper bound of the claim. 
Unlike before, we now let $\pi$ denote an optimal coupling between $F_\# q$ and $F_\# u$,
and $\tilde \pi = (Id, T_0)_\# \pi \in \Pi(F_\# q, (T_0\circ F)_\# u)$ a possibly suboptimal coupling.
By assumption~\ref{assm:bded_smoothness}, we then have
\begin{align}
\label{eq:pf_minimax_ub}
\nonumber
\calT_c(F_\# q, (T_0 \circ F)_\# u) - h(z_0)
 &\leq \int\Big[h(y-x) - h(z_0)\Big] d\tilde \pi(x, y) \\
\nonumber
 &= \int \Big[ h(y-x+ z_0) - h(z_0)\Big] d\pi(x, y) \\
\nonumber
 &\leq \int \langle \nabla h(z_0), y-x \rangle d \pi(x,y) +  \Lambda  \int \norm{x-y}^\alpha d\pi(x,y)\\
 &= \Gamma_m +  \Lambda W_\alpha^\alpha(F_\# q, F_\# u).
 \end{align}
Now, by~\cite{niles-weed2019}, Proposition 9, there exists a constant $C_2 > 0$ such that
$$  W_\alpha^\alpha(F_\# u, F_\# q) 
\leq C_2 m^{-\alpha/d} \big( \chi^2(q, u)\big)^{\alpha/d} \TV(q, u)^{1 - \frac {2\alpha}{d}}.$$
After possibly modifying   $C_2$, it follows from Markov's inequality that
$$\bbP\Big(   W_\alpha^\alpha(F_\# u, F_\# q)
\leq C_2 \Lambda m^{-\alpha/d}\big( \chi^2(q, u)\big)^{\alpha/d} \TV(q, u)^{1 - \frac {2\alpha}{d}}\Big) \geq .975,$$
so that, together with equations~\eqref{eq:pf_minimax_Gamma_n_Markov} and~\eqref{eq:pf_minimax_ub}, we have with probability at least $.95$, 
$$  \calT_c(F_\# u, (T_0 \circ F)_\# u)  - h(z_0)
\leq C_{\lambda,\Lambda} \sqrt{\frac{\chi^2(q,u)}{m}} + C_2 \Lambda m^{-\alpha/d}\big( \chi^2(q, u)\big)^{\alpha/d} \TV(q, u)^{1 - \frac {2\alpha}{d}}.$$
Combining this fact with equation \eqref{eq:lower_bound_highprob} and a union bound leads to the claim.
 \end{proof}
We now prove the main Theorem. In what follows, let
$\calD_m$ denote the set of probability distributions $q$ on $[m]$
satisfying $\chi^2(q, u) \leq 9$. Also, given $\delta > 0$, let 
$\calD_{m,\delta}^-$ denote the subset of distributions in $\calD_m$
satisfying $\TV(q, u) \leq \delta$, and 
by $\calD_m^+$ the subset of $\calD_m$ satisfying $\TV(q, u) \geq 1/4$.
Furthermore, set 
$$\Delta_m = \frac{C_1 \lambda m^{-\alpha/d}}{16},$$
and $\delta = \left(\frac{C_1\lambda}{288 \Lambda C_2}\right)^{\frac 1 {1 - \frac{2\alpha}{d}}}$. 
Since $d \geq 5 > 2\alpha$, we may assume that $m$ is large enough to satisfy
$$\Delta_m \geq  2C_{\lambda,\Lambda} \sqrt{\frac{9}{m}},$$
Then, by Proposition \ref{prop:w-tv-chi}, for all  $q \in \calD_{m,\delta}^-$, we have with probability at least $.9$,
\begin{align*}
\calT_c\big(F_\# q, (T_0 \circ F)_\# u\big) - h(z_0) 
 &\leq 
C_2 \Lambda m^{-\alpha/d} \big(\chi^2(q, u)\big)^{\frac \alpha d} \TV(q, u)^{1 - \frac {2\alpha} d}
+ C_{\lambda,\Lambda} \sqrt{\frac{\chi^2(q,u)}{m}} \\
 &\leq   \frac{C_1\lambda C_2 \Lambda m^{-\frac \alpha d} 9^{\frac \alpha d}}{288 \Lambda C_2}
+ C_{\lambda,\Lambda} \sqrt{\frac{9}{m}} 
 \leq   \frac{\Delta_m}{2 }
+ C_{\lambda,\Lambda} \sqrt{\frac{9}{m}} \leq \Delta_m.
\end{align*}
Similarly, for all $q \in \calD_m^+$, we have with probability at least .9,
\begin{align*}
\calT_c\big(F_\# q, (T_0 \circ F)_\# u\big) - h(z_0) 
 &\geq 
 C_1 \lambda m^{-\alpha/d} \TV(q, u) - C_{\lambda,\Lambda} \sqrt{\frac{\chi^2(q,u)}{m}}\\
 &\geq 
 \frac{C_1}{4} \lambda m^{-\alpha/d}  - C_{\lambda,\Lambda} \sqrt{\frac{9}{m}} 
 \geq 4\Delta_m  - C_{\lambda,\Lambda} \sqrt{\frac{9}{m}}\geq 3\Delta_m.
\end{align*}
%
Now, for any given estimator $\hat \calT_n$ based on the independent
samples $X_1, \dots, X_n$ and $Y_1, \dots, Y_n$, 
define the event $A = \{|\hat \calT_n - \calT_c(F_\# q, (T_0\circ F)_\# u)| \geq \Delta_m\}$. 
We have by Markov's inequality,
\begin{align}
\nonumber
\sup_{\substack{\mu \in \calP(\calX) \\ \nu \in \calP(\calY)}} \bbE_{\mu,\nu}\big| \hat \calT_n - \calT_c(\mu,\nu)\big|
 &{\geq} \Delta_m \sup_{\substack{\mu \in \calP(\calX) \\ \nu \in \calP(\calY)}} 
 		\bbP_{\mu,\nu}\Big(|\hat \calT_n - \calT_c(\mu,\nu)| \geq \Delta_m\Big) \\
 &{\geq} \frac {\Delta_m} 2 \left\{ \sup_{q \in \calD_{m,\delta}^-} \bbE_F \bbP_{F_\# q,(T_0\circ F)_\# u}[A] + 
 						 \sup_{q \in \calD_{m}^+} \bbE_F \bbP_{F_\# q, (T_0\circ F)_\# u}[A]\right\}.
\label{eq:minimax_reduction} 						 
\end{align}

Notice that for all $q \in \calD_{m,\delta}^-$, we have
\begin{align*}
\bbE_F &\bbP_{F_\# q,(T_0\circ F)_\# u}[A] \\
 &\geq \bbE_F \bbP_{F_\# q,(T_0\circ F)_\# u}\Big(\hat \calT_n \geq h(z_0) + 2\Delta_m \text{ and } \calT_c(F_\# q, (T_0\circ F)_\# u) \leq h(z_0) + \Delta_m\Big)\\
 &\geq \bbE_F \bbP_{F_\# q,(T_0\circ F)_\# u}\Big( \hat \calT_n \geq h(z_0) + 2\Delta_m \Big) - 
       \bbP_F \Big(\calT_c(F_\# q, (T_0\circ F)_\# u) > h(z_0) + \Delta_m\Big)\\
 &\geq \bbE_F \bbP_{F_\# q,(T_0\circ F)_\# u}\Big( \hat \calT_n\geq h(z_0) + 2\Delta_m \Big) -   .1.
       \end{align*}
Similarly, for all $q \in \calD_m^+$, 
 \begin{align*}
\bbE_F& \bbP_{F_\# q,(T_0\circ F)_\# u}[A] \\
 &\geq \bbE_F \bbP_{F_\# q,(T_0\circ F)_\# u}\Big( \hat \calT_n \leq h(z_0) + 2\Delta_m \text{ and } \calT_c(F_\# q, (T_v\circ F)_\# u) \geq h(z_0) + 3\Delta_m\Big)\\
&\geq \bbE_F \bbP_{F_\# q,(T_0\circ F)_\# u}\Big(\hat \calT_n \leq h(z_0) + 2\Delta_m \Big) -  .1.
\end{align*}      
Returning to equation~\eqref{eq:minimax_reduction}, we thus have,
\begin{align*}
\inf_{\hat \calT_n}\sup_{\substack{\mu \in \calP(\calX) \\ \nu \in \calP(\calY)}}   \bbE_{\mu,\nu}\big| \hat \calT_n - \calT_c(\mu,\nu)\big|
\geq \frac {\Delta_m} 2 \inf_\psi\left\{ \sup_{q \in \calD_{m,\delta}^-} \bbP_q(\psi=1) +
 										   \sup_{q \in \calD_{m}^+} \bbP_q(\psi=0)  - 0.2\right\},
\end{align*}
where the infimum is over all tests based 
on the samples $X_1, \dots, X_n$ and $Y_1, \dots, Y_n$. By Proposition 10
of \cite{niles-weed2019}, the infimum on the right-hand side of the above display is bounded below by a constant
if $m\asymp n \log n$. The claim then follows by definition of $\Delta_m$. \qed

  \hspace{0.14in}
  
{\bf Acknowledgements.} The authors would like to thank an anonymous referee for comments
which significantly improved the quality of this paper.
TM was partially supported by the Natural Sciences and Engineering Research Council
of Canada, through a PGS D scholarship. JNW gratefully acknowledges the support of National Science Foundation grant DMS-2015291.

\bibliographystyle{imsart-nameyear} 
\bibliography{manuscript_aoap}       


\end{document}